\newtheorem{thm}{Theorem}[section]
\newtheorem{prop}[thm]{Proposition}
\newtheorem{lem}[thm]{Lemma}
\newtheorem{cor}[thm]{Corollary}
\newtheorem{ques}[thm]{Question}
\newtheorem{rem}[thm]{Remark}
\def\N{\mathbb{N}}
\def\N{\mathbb{N}}
\def\R{\mathbb{R}}
\def\LL{\mathcal{L}}
\def\MM{\mathcal{M}}
\def\Id{\text{\rm Id}}
\def\diam{\text{\rm diam}}
\numberwithin{equation}{section}
\title[Zero-dimensional and symbolic extensions of topological flows]{Zero-dimensional and symbolic extensions of topological flows}
\author{David Burguet
}
\address%[authorlabel1]
{Sorbonne Universite, LPSM, 75005 Paris, France}
\email{david.burguet@upmc.fr}
\author{Ruxi Shi
}
\address%[authorlabel1]
{Institute of Mathematics, Polish Academy of Sciences, ul. \'Sniadeckich 8, 00-656 Warszawa, Poland}
\email{rshi@impan.pl}
\begin{document}
	
	\maketitle

\begin{abstract}
A zero-dimensional  (resp. symbolic) flow is a suspension flow over a zero-dimensional system (resp. a subshift). We show that any topological flow admits a principal extension by a zero-dimensional flow. Following  \cite{burguet2019symbolic} we deduce that  any topological flow admits an extension by 
a symbolic flow if and only if  its time-$t$ map admits an extension by a subshift for any $t\neq 0$.  Moreover the existence of such an extension is preserved  under orbit equivalence for regular topological flows, but this property does  not hold more true for singular flows. Finally we investigate symbolic extensions for singular suspension  flows. In particular, the suspension flow over the full shift on $\{0,1\}^{\mathbb Z}$ with a  roof function  $f$  vanishing at the zero sequence $0^\infty$ admits a principal symbolic extension or not depending on the smoothness of $f$ at $0^\infty$.
\end{abstract}

\section{Introduction}

A symbolic extension of a discrete topological system $(X,T)$ is a topological extension $\pi:(Y,S)\rightarrow (X,T)$ by a subshift $(Y,S)$. 
Existence of symbolic extensions and their entropy are related with weak expansive entropy properties of the system \cite{BDz2004}.  Building on \cite{BDz2004} the first author and T.Downarowicz also investigate uniform generators, which are symbolic extensions with a Borel  embedding $\psi:(X,T)\rightarrow (Y,S)$ with $\pi\circ \psi=\Id_X$.  

To study  symbolic extensions for discrete systems, M. Boyle and T. Downarowicz \cite{BDz2004} have developed a new entropy theory. The first step in their construction of symbolic extensions consists in building a zero-dimensional principal extension (see Section \ref{exten} for precise definitions). In \cite{BDz2004} this is done by using the small boundary property for  finite topological entropy systems with a minimal factor \cite{L99}. In this way one even get a strongly isomorphic zero-dimensional extension. Later T. Downarowicz and D. Huczek gave a constructive proof of a zero-dimensional principal extension for any topological system (with finite topological entropy or not).

More recently the first author  developed in  \cite{burguet2019symbolic}  a theory of uniform generators for topological  regular (i.e. without fixed points) flows. In this context a symbolic extension (resp. uniform generator) is a topological extension by a regular suspension flow over a subshift (resp.  with an embedding). To investigate existence of uniform generators he considered topological flows satisfying the so-called small flow boundary property\footnote{The small flow boundary property (for topological flows) is an analogy to the small boundary property (for discrete topological systems). This notion is introduced by the first author in \cite{burguet2019symbolic}.}. Under this extra assumption, the flow admits a  strongly isomorphic zero-dimensional extension. Here we  investigate only symbolic extensions for topological flows and not uniform generators for these, so that to be reduced to the case of   zero-dimensional suspension flows, we only need to build a  principal zero-dimensional extension (this is achieved in Section \ref{zeroext}). In this way we may also  consider  the symbolic extensions of a topological singular flow \footnote{An isomorphic extension of a singular flow is necessarily singular (so that if one wants to develop a theory of uniform generators one should consider singular suspension flows over a subshift).}.
In Section \ref{appli} we build on \cite{burguet2019symbolic} to study symbolic extension for general topological  flows. In particular we show that two regular topological flows which are orbit equivalent either  both admit a (resp. principal) symbolic extension or not. This result was only proved in \cite{burguet2019symbolic} for regular flows with the small flow boundary property. We also give counterexamples for singular flows. 

In the rest part (Section \ref{sec:Singular suspension flows}) we define singular suspension flows as suspension flows over a discrete system $(X,T)$ with a roof function vanishing only at a fixed point of $T$. We investigate their symbolic extensions. In the case $(X,T)$ is expansive, an  associated singular suspension flow with finite topological entropy does not admit in general a principal  symbolic extension. This depends on the behaviour of the roof function near the singularity. For $(X,T)$ being the two full shift  we illustrate this phenomenon by computing  the symbolic extension entropy function for a large class of roof functions. In fact we explicitly build a symbolic extension of the corresponding  singular suspension flow.

\section{Zero-dimensional principal extension}\label{zeroext}

By a  (discrete) topological system $(X,T)$, we mean that $X$ is a compact metrizable space and $T: X\to X$ is continuous. Moreover, the system is said to be {\it invertible} if $T$ is a homeomorphism. A pair $(X, \Phi)$ is called a {\it topological semi-flow} (resp. {\it topological flow}) if $X$ is a metrizable compact space and $\Phi=(\phi_t)_{t\ge 0}$ (resp. $\Phi=(\phi_t)_{t\in \mathbb R}$) are continuous maps from $X$ to itself satisfying that $\phi_0(x)=x$ and $\phi_t(\phi_s(x))=\phi_{t+s}(x)$ for all $t,s\geq 0$ (resp. $t,s\in \mathbb R$). A point $x\in X$ is a fixed point of $\Phi$ when $\phi_t(x)=x$ for all $t\geq0$. A flow without fixed point is called {\it regular}, otherwise the flow is said {\it singular}. We let $\mathcal M_{T}(X)$ or $\mathcal M_{\Phi}(X)$ (resp. $\mathcal M^e_{T}(X)$ or $\mathcal M^e_{\Phi}(X)$) be the compact set of  Borel probability measures invariant (resp. ergodic)  by the topological system or flow. We recall that the measure-theoretic entropy $h^{\Phi}(\mu)$ of  $\mu\in \mathcal M_{\Phi}(X)$ is defined as the the entropy of its time-$1$ map, i.e. $h_{\Phi}(\mu)=h_{\phi_1}(\mu)$. Through this paper, we use the following notations:  $\R_{\ge 0}=[0, \infty)$ and $\R_{> 0}=(0, \infty)$.

\subsection{Suspension semi-flows}\label{sec:Suspension}

%In this section, we recall several notions of topological semi-flows.
%\subsection{}
Let $(X, d)$ be a compact metric space and $T:X\to X$ a continuous map. Let $f: X\to \mathbb{R}_{>0}$ be a continuous map. 
Let $\overline{X^f}$ be the compact subset of $X\times \mathbb R_{\geq 0}$ defined by 
\begin{align}\label{suspension}
\overline{X^{f}}:=\{(x,t): 0\le t\le f(x), x\in X \}.
\end{align}

We consider the equivalence relation  $\sim$ on $\overline{X^f}$   with $ (x,f(x))\sim (Tx,0)$ for all $x\in X$ and we denote by $X^f$ the quotient space $\overline{X^f}/\sim$ endowed with the quotient topology. By abuse of notations  we also write $(x,t)$ to denote the equivalence class of $(x,t)\in \overline{X^{f}}$. The {\it suspension} semi-flow over $T$ under the \textit{roof function} $f$, written by $(X^f, T^f)$, is the semi-flow $(T^f_t)_{t\in \R}$ on the space $X^f$
induced by the time translation $T_t$ on $X\times \mathbb{R}_{\ge 0}$ defined by $T_t(x,s)=(x, t+s)$. If $T$ is a homeomorphism, then $(X^f, T^f)$ defines a flow. Such (semi-)flows are regular. \\

Bowen and Walters define a metric $d_f$ compatible with the quotient topology on $X^f$  as follows  \cite[Section 4]{bowen1972expansive}. For a point $(x,t)\in X^f$ we let $u_{(x,t)}=t/f(x)$.  A  pair of points $A=(x_A,t_A)$ and $B=(x_B,t_B)$ in $X^f$ is said to be
\begin{itemize}
\item {\it horizontal} if  $u_A=u_B$, then its length is $|AB|:= (1-u_A)d(x_A,x_B)+u_Ad(Tx_A,Tx_B)$.
\item {\it vertical} if $x_A=x_B$, $x_B=Tx_A$ or $x_A=Tx_B$ , then its length is $|AB|:=| u_A-u_B|, \, 1- u_B+u_A\text{ or }1- u_A+u_B$ respectively.
\end{itemize}
A sequence $A_1,\cdots, A_n$ of $n$ points in $X^f$ is said {\it admissible} when $(A_i,A_{i+1})$ is either vertical or horizontal for $i=1,\cdots, n-1$ and the length of this sequence is defined as the sum of the length of its corresponding pairs. Then the distance $d_f$ between two points $A$ and $B$ is defined as the infimum of the length of all admissible sequences $A_1,\cdots, A_n$ with $A_1=A$ and $A_n=B$.\\

Denote by $\LL$ the Lebesgue measure on $\R$. Let $(X,T)$ be a discrete topological system. Let $f: X\to \R_{>0}$ be a continuous function and $(X^f, T^f)$ be the associated suspension semi-flow. For $\mu\in \MM_T(X)$ the product measure $\mu \times \LL$ induces a finite $T^f$-invariant measure on $X^f$, which defines a homeomorphism $\Theta$ between $\MM_{T^f}(X^f)$ and $\MM_T(X)$:
\begin{equation*}
\begin{split}
\Theta: \MM_T(X) &\to \MM_{T^f}(X^f)\\
\mu &\mapsto \frac{(\mu\times \LL)_{|X^f}}{\int f d\mu}.
\end{split}
\end{equation*}

Due to Abramov \cite{abramov1959entropy}, the entropy of $\mu$ and $\Theta(\mu)$ are related by the following formula 
\begin{equation}\label{eq:entropy suspension flow}
h_{T^f}(\Theta(\mu))=\frac{h_T(\mu)}{\int f d\mu}, \forall \mu \in \MM_T(X).
\end{equation}

\subsection{Extensions}\label{exten}

A suspension flow over a zero-dimensional  invertible dynamical system will be called a {\it zero-dimensional suspension flow} and a topological extension by a zero-dimensional suspension flow is said to be a {\it zero-dimensional extension}. Similarly a suspension flow over a symbolic discrete topological dynamical system (a.k.a. $\mathbb{Z}$-\emph{subshift}) will be called a {\it symbolic suspension flow} and a topological extension by a symbolic suspension flow is said to be a {\it symbolic extension}.\\

Let $(X, \Phi)$ and $(Y, \Psi)$ be two topological semi-flows. Suppose that $\pi: Y\to X$ is a topological extension from $(X, \Phi)$ to $(Y, \Psi)$. The topological extension is said to be
\begin{itemize}
	\item \textit{principal} when it preserves the entropy of invariant measures, i.e. $h(\mu)=h(\pi \mu)$ for all $\Psi$-invariant measure $\mu$,
	\item \textit{with an embedding} when there is a Borel embedding $\psi:(X,\Phi)\rightarrow (Y,\Psi)$ with $\pi\circ \psi =\Id_X$,
	\item \textit{isomorphic} when the map induced by $\pi$ on the sets of invariant Borel probability measures is bijective and $\pi: (Y, \Psi, \mu) \to (X, \Phi, \pi \mu)$ is a measurable isomorphism for any $\Psi$-invariant measure $\mu$,
	\item \textit{strongly isomorphic} when there is a set $E\subset X$ with $\mu(E)=1$ for all $\mu\in \mathcal M_\Phi(X)$ such that the restriction of $\pi$ to $\pi^{-1}E$ is one-to-one.
\end{itemize}
Clearly, we have the following implication:

%\centerline{with an embedding $\Longleftarrow$ strongly isomorphic $\Longrightarrow$ isomorphic $\Longrightarrow$ principal.}
\begin{equation*}
\xymatrix{
	\text{strongly isomorphic} \ar@2{->}[r]  \ar@2{->}[d]   &  \text{isomorphic} \ar@2{->}[d] \\
 \text{with an embedding}	 & \text{principal}. 
}
\end{equation*}

\subsection{Construction}\label{sec:Zero-dimensional principal extension of flows}
In this section we build a zero-dimensional principal extension for any topological semi-flow.
\begin{thm}\label{main thm}
	Every topological semi-flow has a zero-dimensional principal extension. Moreover the roof function of this extension may be chosen constant equal to $1$.
\end{thm}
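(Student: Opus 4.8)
The plan is to reduce the statement about topological semi-flows to the known discrete case — namely the Downarowicz–Huczek theorem that every discrete topological system admits a zero-dimensional principal extension — by first representing the given flow, up to a principal extension, as a suspension with constant roof function $1$, and then applying the discrete result to the base.

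\medskip

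\textbf{Step 1: Reduce to a constant roof.} Let $(X,\Phi)$ be a topological semi-flow. The first thing I would do is pass to a suspension model with roof $\equiv 1$. Choose a continuous map $g\colon X\to[0,1]$ or more conveniently consider the map $\iota\colon X\to (X')^{1}$ into the suspension with constant roof $1$ over a suitable discrete system $(X',T')$; the natural candidate for $X'$ is a ``cross-section'' construction, but since $\Phi$ may be singular and need not admit a global section, the cleaner route is: build the system $(X',T')$ where $X' = X\times[0,1]$-type data encoding the flow and $T'$ advances by one unit of flow time, i.e. take $X' := X$ with the understanding that the suspension of $(X,\phi_1)$ with roof $1$ maps onto $(X,\Phi)$ by $(x,t)\mapsto \phi_t(x)$. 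Indeed, the map $q\colon X^{1}\to X$, $q(x,t)=\phi_t(x)$ (where $X^1$ is the suspension of $(X,\phi_1)$ under $f\equiv1$) is a well-defined continuous surjection intertwining $T^1$ and $\Phi$, because $q(x,1)=\phi_1(x)=q(\phi_1(x),0)$ and $q(T^1_s(x,t))=\phi_{s+t}(x)=\phi_s(q(x,t))$. Thus $(X^1,T^1)\to(X,\Phi)$ is a topological extension with constant roof $1$ over the (possibly non-invertible) system $(X,\phi_1)$. By Abramov's formula \eqref{eq:entropy suspension flow} together with the bijection $\Theta$, and using $h_\Phi(\mu)=h_{\phi_1}(\mu)$, this extension is automatically principal: an invariant measure upstairs is $\Theta(\nu)$ for $\nu\in\mathcal M_{\phi_1}(X)$ with $h_{T^1}(\Theta(\nu))=h_{\phi_1}(\nu)/\!\int 1\,d\nu=h_{\phi_1}(\nu)$, and its pushforward is the $\Phi$-invariant measure with the same entropy.

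\medskip

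\textbf{Step 2: Invoke the discrete zero-dimensional principal extension.} Now $(X,\phi_1)$ is a discrete topological system (not necessarily invertible). Apply the Downarowicz–Huczek theorem quoted in the introduction: there is a zero-dimensional system $(Z,S)$ and a principal topological extension $p\colon (Z,S)\to(X,\phi_1)$. Here I need the version valid for arbitrary (including non-invertible) topological systems and possibly infinite entropy; if the cited result is stated only for homeomorphisms, I would first replace $(X,\phi_1)$ by its natural extension (inverse limit), which is a principal extension by an invertible system on a space that is still compact metrizable, and note that for the suspension this does not change anything at the level of invariant measures and entropy. Either way, one obtains a zero-dimensional \emph{invertible} system $(Z,S)$ together with a principal extension onto $(X,\phi_1)$.

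\medskip

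\textbf{Step 3: Suspend and conclude.} Form the suspension $(Z^1,S^1)$ with constant roof function $1$. Since $Z$ is zero-dimensional and $S$ is a homeomorphism, $(Z^1,S^1)$ is by definition a zero-dimensional suspension flow, and its roof function is constant equal to $1$ as required. The extension $p$ lifts to a map $P\colon Z^1\to X^1$, $P(z,t)=(p(z),t)$, which is continuous (the roofs match, both being $1$, so no reparametrisation is needed) and intertwines $S^1$ with $T^1$. Composing with $q$ from Step 1 gives a topological extension $\Pi := q\circ P\colon (Z^1,S^1)\to(X,\Phi)$. It remains to check $\Pi$ is principal: invariant measures of $S^1$ are exactly $\Theta(\tilde\nu)$ for $\tilde\nu\in\mathcal M_S(Z)$, with $h_{S^1}(\Theta(\tilde\nu)) = h_S(\tilde\nu)$; the pushforward under $P$ is $\Theta(p_*\tilde\nu)$ with $h_{T^1}(\Theta(p_*\tilde\nu))=h_{\phi_1}(p_*\tilde\nu)=h_S(\tilde\nu)$ since $p$ is principal; and the further pushforward under $q$ has the same entropy by Step 1. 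Hence $h_{S^1}(\cdot)=h_\Phi(\Pi_*\cdot)$ on all $S^1$-invariant measures, so $\Pi$ is a zero-dimensional principal extension with constant roof $1$.

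\medskip

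\textbf{Main obstacle.} The only genuinely delicate point is the non-invertibility of $\phi_1$: the definition of a zero-dimensional suspension flow in Section~\ref{exten} requires the base to be an \emph{invertible} zero-dimensional system, so I must ensure the discrete principal extension I use produces an invertible base, which forces either citing a sufficiently general form of the Downarowicz–Huczek result or inserting the natural-extension step and verifying that passing to the inverse limit of $(X,\phi_1)$ is compatible — at the level of invariant measures and of the suspension — with recovering $(X,\Phi)$ itself. Checking that $q\colon X^1\to X$ is well defined, continuous, surjective and entropy-preserving (via $\Theta$ and \eqref{eq:entropy suspension flow}) is routine but is where all the bookkeeping lives; everything else is formal.
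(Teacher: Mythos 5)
Your overall route is the same as the paper's: suspend $(X,\phi_1)$ under the constant roof $\mathbbm 1$, map it onto $(X,\Phi)$ by $(x,t)\mapsto\phi_t(x)$, and then apply the Downarowicz--Huczek theorem to the base and lift the resulting zero-dimensional principal extension to the suspension (your Steps 2 and 3 are exactly Proposition \ref{prop:zero-dimensional principal extension suspension flow} with $f\equiv 1$). Your worry about invertibility is handled by the cited form of the discrete result, which produces an invertible zero-dimensional extension of an arbitrary (not necessarily invertible) system.

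There is, however, one genuine gap, and it sits at the only non-formal point of the whole argument: your claim in Step~1 that the extension $q\colon (X^{\mathbbm 1},(\phi_1)^{\mathbbm 1})\to(X,\Phi)$ is ``automatically'' principal because the pushforward of $\Theta(\nu)$ ``is the $\Phi$-invariant measure with the same entropy.'' A $(\phi_1)^{\mathbbm 1}$-invariant measure has the form $\nu\times\LL_{[0,1]}$ with $\nu$ only $\phi_1$-invariant, and its image under $q$ is not ``$\nu$ regarded as a $\Phi$-invariant measure'' (in general $\nu$ is not $\Phi$-invariant at all); a Fubini computation shows it is the average $\int_0^1\phi_t\nu\,dt$. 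Comparing $h_{\Phi}\bigl(\int_0^1\phi_t\nu\,dt\bigr)=h_{\phi_1}\bigl(\int_0^1\phi_t\nu\,dt\bigr)$ with $h_{(\phi_1)^{\mathbbm 1}}(\nu\times\LL_{[0,1]})=h_{\phi_1}(\nu)$ is exactly what must be proved, and it is not a consequence of Abramov's formula alone: one needs (i) harmonicity of the entropy function $h_{\phi_1}$, to write the entropy of the average as $\int_0^1 h_{\phi_1}(\phi_t\nu)\,dt$, and (ii) the inequality $h_{\phi_1}(\phi_t\nu)\ge h_{\phi_1}(\nu)$ for all $t\ge0$ (proved in the paper by noting that for $s\ge 0$ with $t+s\in\N$ and any finite Borel partition $P$ one has $h_{\phi_1}(\phi_t\nu,\phi_s^{-1}P)=h_{\phi_1}(\nu,P)$). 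Together with the trivial factor inequality in the other direction this yields principality. Without (i) and (ii) your Step~1 — and hence the principality of the final composition in Step~3, which you reduce to Step~1 — is unjustified. Supplying this computation closes the gap and makes your argument coincide with the paper's proof.
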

To prove this theorem, we first show that every suspension semi-flow has a zero-dimensional principal extension (Proposition \ref{prop:zero-dimensional principal extension suspension flow}). Then, for a general topological flow we build a principal extension by  a suspension flow.

%\subsection{Natural extension of non-invertible dynamical system}

\subsubsection{Zero-dimensional principal extension of suspension flow}\label{rappel}

Let $(Z, T)$ be a discrete topological system. Let $f: Z\to \R_{>0}$ be a continuous function. In this section, we construct a  zero-dimensional principal extension of the suspension semi-flow $(Z^f, T^f)$. Due to T. Downarowicz and D. Huczek \cite{downarowiczHuczek2013zero}, there exists an invertible dynamical system $(X, S)$ which is a zero-dimensional principal extension  of $(Z,T)$. Denote by $\rho: X\to Z$ the factor map. We see that the map  $g:=f\circ \rho: X \to \R_{>0}$ is continuous. Define $\overline{\rho}: \overline{X^g} \to \overline{Z^f}$ by $\overline{\rho}(x, t)\mapsto \left(\rho(x), t\right)$ for all $(x,t)\in \overline{X^g}$. We have $\overline{\rho}(x,f(x))=(\rho(x), f\circ \rho(x))$ for all $x\in X$, so that $\overline{\rho}$ induces a continuous map $\widehat{\rho}:X^g\rightarrow Z^f$.  Moreover $\overline{\rho}$ commutes with the translation on the second coordinate, 
therefore $\widehat{\rho}$ is a continuous factor map. 

\begin{lem}
	$\widehat{\rho}$ is principal.
\end{lem}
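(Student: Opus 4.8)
The plan is to use the Abramov formula \eqref{eq:entropy suspension flow} to reduce the statement about suspension semi-flows to the already-known fact that $\rho:(X,S)\to(Z,T)$ is a principal extension. First I would recall that by the homeomorphism $\Theta$ described in Section \ref{sec:Suspension}, every $T^f$-invariant measure on $Z^f$ is of the form $\Theta_f(\mu)$ for a unique $\mu\in\MM_T(Z)$, and similarly every $S^g$-invariant measure on $X^g$ is $\Theta_g(\nu)$ for a unique $\nu\in\MM_S(X)$ (here $\Theta_f,\Theta_g$ denote the maps $\Theta$ built from the roofs $f$ and $g$ respectively). So it suffices to take an arbitrary $S^g$-invariant measure $\widetilde m=\Theta_g(\nu)$ and show $h_{S^g}(\widetilde m)=h_{T^f}(\widehat\rho\,\widetilde m)$.

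The key computational step is to identify the pushforward: I claim $\widehat\rho_*\Theta_g(\nu)=\Theta_f(\rho_*\nu)$. This follows because $\overline\rho(x,t)=(\rho(x),t)$ intertwines the translation flows and sends the product measure $\nu\times\LL$ (restricted to $\overline{X^g}$) to $(\rho_*\nu)\times\LL$ (restricted to $\overline{Z^f}$) — this uses $g=f\circ\rho$, so that the "fibered intervals" $[0,g(x)]$ map onto $[0,f(\rho(x))]$ — and the normalizing constants match since $\int g\,d\nu=\int f\circ\rho\,d\nu=\int f\,d(\rho_*\nu)$. Once this identification is in place, applying the Abramov formula \eqref{eq:entropy suspension flow} twice gives
\[
h_{S^g}(\Theta_g(\nu))=\frac{h_S(\nu)}{\int g\,d\nu}=\frac{h_S(\nu)}{\int f\,d(\rho_*\nu)},\qquad
h_{T^f}(\Theta_f(\rho_*\nu))=\frac{h_T(\rho_*\nu)}{\int f\,d(\rho_*\nu)}.
\]
Since $\rho$ is a principal extension of $(Z,T)$ we have $h_S(\nu)=h_T(\rho_*\nu)$, so the two right-hand sides coincide and $h_{S^g}(\widetilde m)=h_{T^f}(\widehat\rho\,\widetilde m)$, as required.

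The main obstacle — such as it is — is the bookkeeping in the measure identification $\widehat\rho_*\Theta_g(\nu)=\Theta_f(\rho_*\nu)$: one must be slightly careful that the quotient maps to $X^g$ and $Z^f$ are measure-isomorphisms onto $\overline{X^g},\overline{Z^f}$ modulo the measure-zero gluing sets, so that computing the pushforward on $\overline{X^g}$ is legitimate. Everything else is a direct substitution into \eqref{eq:entropy suspension flow}. I would also remark that the same argument shows $\widehat\rho$ inherits any stronger property of $\rho$ (it is isomorphic, or strongly isomorphic, if $\rho$ is), though only principality is needed here.
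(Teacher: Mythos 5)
Your proposal is correct and follows exactly the paper's argument: identify $\widehat{\rho}\,\Theta(\mu)=\Theta(\rho\mu)$ using $g=f\circ\rho$ and the matching normalization $\int g\,d\mu=\int f\,d(\rho\mu)$, then apply the Abramov formula \eqref{eq:entropy suspension flow} twice together with the principality of $\rho$. The extra care you take about the pushforward of the product measure and the quotient identification is just a more explicit version of the paper's one-line computation.
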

\begin{proof}
	Let $\mu \in \MM_S(X)$. Since $\rho$ is principal, we see that $h(\rho\mu)=h(\mu)$. It is clear that
	$$
	\widehat{\rho}\Theta(\mu)=\frac{\widehat{\rho}(\mu\times \LL)_{|X^g}}{\int g \, d\mu}=\frac{(\rho\mu\times \LL)_{|Z^f}}{\int f \, d(\rho\mu)}=\Theta(\rho\mu).
	$$
	Then by \eqref{eq:entropy suspension flow}, we obtain that
	$$
	h(\widehat{\rho}\Theta(\mu))=h(\Theta(\rho\mu))=\frac{h(\rho\mu)}{\int f \, d(\rho\mu)}=\frac{h(\mu)}{\int g \, d\mu}=h(\Theta(\mu)).
	$$
	
\end{proof}

To sum up, we obtain the following proposition.
\begin{prop}\label{prop:zero-dimensional principal extension suspension flow}
	Every suspension semi-flow has a zero-dimensional principal extension.
\end{prop}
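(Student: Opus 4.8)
The plan is to assemble the pieces built earlier in this subsection. Given a suspension semi-flow $(Z^f, T^f)$ over a discrete topological system $(Z,T)$ with continuous roof $f: Z \to \R_{>0}$, I would first invoke the theorem of Downarowicz and Huczek \cite{downarowiczHuczek2013zero}: there exists a zero-dimensional \emph{invertible} system $(X,S)$ together with a principal factor map $\rho: X \to Z$. This is the only external input needed; everything else is the explicit lifting construction already described. Set $g := f \circ \rho$, which is continuous and strictly positive since $\rho$ is continuous and $f > 0$, so the suspension $(X^g, S^g)$ is a well-defined suspension semi-flow over the zero-dimensional invertible system $(X,S)$ — that is, a zero-dimensional suspension flow in the sense of Section \ref{exten}.

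Next I would check that the induced map $\widehat{\rho}: X^g \to Z^f$ is a topological extension. The map $\overline{\rho}(x,t) = (\rho(x),t)$ sends $\overline{X^g}$ to $\overline{Z^f}$ because $0 \le t \le g(x) = f(\rho(x))$, it is continuous, and it respects the gluing relation since $\overline{\rho}(x, g(x)) = (\rho(x), f(\rho(x)))$ is identified with $(T\rho(x), 0) = (\rho(Sx), 0) = \overline{\rho}(Sx, 0)$; hence it descends to a continuous map $\widehat{\rho}$ on the quotients. It is surjective because $\rho$ is, and it intertwines the two suspension semi-flows because $\overline{\rho}$ commutes with translation in the second coordinate. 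This is exactly the construction carried out in the paragraph preceding the lemma.

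Finally, the Lemma just proved establishes that $\widehat{\rho}$ is principal: for every $\nu \in \MM_{S^g}(X^g)$, writing $\nu = \Theta(\mu)$ for the corresponding $\mu \in \MM_S(X)$ via the Abramov correspondence $\Theta$, one has $\widehat{\rho}\,\nu = \Theta(\rho\mu)$ and, using principality of $\rho$ together with the Abramov entropy formula \eqref{eq:entropy suspension flow}, $h(\widehat{\rho}\,\nu) = h(\nu)$. Therefore $\widehat{\rho}: (X^g, S^g) \to (Z^f, T^f)$ is a zero-dimensional principal extension, which proves the proposition. There is essentially no obstacle here: the proposition is a formal consequence of the Downarowicz–Huczek theorem and the behaviour of suspensions under factor maps, and the only mild point of care is verifying that $(X,S)$ being invertible makes $(X^g,S^g)$ a genuine flow (so that it qualifies as a \emph{zero-dimensional suspension flow}), which it does since $\rho$ principal forces no constraint preventing $S$ from being chosen a homeomorphism — indeed $(X,S)$ is invertible by the cited theorem.
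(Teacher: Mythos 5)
Your proposal is correct and follows essentially the same route as the paper: invoke the Downarowicz--Huczek theorem to obtain an invertible zero-dimensional principal extension $\rho:(X,S)\to(Z,T)$, lift it to $\widehat{\rho}:(X^{f\circ\rho},S^{f\circ\rho})\to(Z^f,T^f)$, and verify principality via the Abramov formula, exactly as in the paper's Section~\ref{rappel}. The only additions are routine verifications (surjectivity, compatibility with the gluing relation) that the paper leaves implicit, so there is nothing further to flag.
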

\subsubsection{General case}

We present the proof of Theorem \ref{main thm} in the general case. 
\begin{proof}[Proof of Theorem \ref{main thm}]
	Let $(X, \Phi)$ be a topological semi-flow. Let us denote by $\mathbbm{1}$  the constant function on $X$ equal to  $1$. Then the suspension semi-flow $(X^{\mathbbm 1}, (\phi_1)^{\mathbbm 1})$ over the time-$1$ map $\phi_1$ under $\mathbbm 1$ defines an extension of $(X, \Phi)$ via the factor map $\pi: (x, t)\mapsto \phi_t(x)$ for $x\in X$ and $t\ge 0$. Notice that a $(\phi_1)^{\mathbbm 1}$-invariant measure on $X^{\mathbbm 1}$ has the form $\mu\times\LL_{[0,1]}$ where $\mu$ is a $\phi_1$-invariant measure and $\LL_{[0,1]}$ is the Lebesgue measure on  $[0,1]$. Pick arbitrary $(\phi_1)^{\mathbbm 1}$-invariant measure $\mu\times \LL_{[0,1]}$. It follows from Fubini's theorem that for all Borel subset  $B$ of $X^{\mathbbm 1}$: 
	\begin{align*}
	\pi(\mu\times \LL_{[0,1]})(B)&=\mu\times \LL_{[0,1]}(\pi^{-1}((B)),\\
	&=\mu\times \LL_{[0,1]}\left(\{(x,t)\in X^{\mathbbm 1}, \ \phi_t(x)\in B\}\right),\\	
	& =\int_{0}^{1}\mu(\phi_{t}^{-1}(B))\, dt,\\
	&=\int_{0}^{1}\phi_{t}\mu (B) \, dt.
	\end{align*}
	For all $t\geq 0$, we  observe that  $h_{\phi_1}(\phi_{t}\mu)\geq  h_{\phi_1}(\mu)$. Indeed, for $s\geq 0$ with $t+s\in \mathbb N$ we have  $h_{\phi_1}(\phi_{t}\mu, \phi_s^{-1}P)=h_{\phi_1}(\mu, \phi_{s+t}^{-1}P)=h_{\phi_1}(\mu, \phi_{1}^{-(t+s)}P)= h_{\phi_1}(\mu,P)$ for any Borel finite partition $P$ of $X$. 
	  Since  the entropy function $h_{\phi_1}$ is harmonic, we get then
	\begin{align*}
	h_{\Phi}(\pi(\mu\times \LL_{[0,1]}))&=h_{\phi_1}(\pi(\mu\times \LL_{[0,1]})),\\
	&=\int_{0}^{1}h_{\phi_1}(\phi_{t}\mu)\, dt,\\
	&\geq h_{\phi_1}(\mu),\\
	&\geq h_{(\phi_1)^{\mathbbm 1}}(\mu\times \LL_{[0,1]}).
	\end{align*}

	Therefore, we conclude that $(X^{\mathbbm 1}, (\phi_1)_{\mathbbm 1})$ is a principal extension of $(X, \Phi)$. By Proposition \ref{prop:zero-dimensional principal extension suspension flow}, the suspension flow $(X^{\mathbbm 1}, (\phi_1)^{\mathbbm 1})$ has a zero-dimensional principal extension. By composition we get a principal zero-dimensional extension of $(X, \Phi)$. This completes the proof.	
\end{proof}

\section{Applications to symbolic extensions}\label{sec:Symbolic extension of flow}\label{appli}
By the previous construction of a zero-dimensional principal extension, the results related to symbolic extensions obtained in \cite{burguet2019symbolic} for flows with the small flow boundary property may be straightforwardly extended to general topological flows.
We first recall the framework of the entropy theory of M. Boyle and T. Downarowicz.

\subsection{Superenvelope of entropy structures}
Entropy structures are particular sequences of nonnegative real functions $\mathcal H= (h_k)_k$ defined on the set $\mathfrak X$ of invariant probability measures,  
$\mathfrak X=\MM_T(X)$ or $=\MM_\Phi (X)$ depending on the context,   which are converging pointwisely to the entropy function $h$. 
For a zero-dimensional system, the entropy with respect to any sequence $(P_k)_k$ of clopen partitions with $\diam(P_k)\xrightarrow{k}0$ 
defines an entropy structure. For a precise definition and examples we refer to \cite{downarowicz2011entropy} for discrete systems and to \cite{burguet2019symbolic} for topological flows. 

For a function $f:\mathfrak{X}\rightarrow \mathbb R$ we let $f^{\tilde{}}$ be the {\it upper semi-continuous envelope} of 
$f$, i.e. $$\forall \mu\in \mathfrak X, \ f^{\tilde{}}(\mu)=\limsup_{\nu\rightarrow \mu}f(\nu). $$
A {\it superenvelope} $E:\mathfrak X \rightarrow \mathbb R$ is an affine function such that for some (any) entropy structure $\mathcal H=(h_k)_k$
$$\lim_k\left(E-h_k\right)^{\tilde{}}=E-h.$$

\subsection{Characterization of  symbolic extensions}

For a symbolic extension $\pi:(Y,S)\rightarrow (X,T)$ of a topological discrete system $(X,T)$, we let 
$h^\pi$ be the associate entropy function defined as 

$$\forall \mu\in \mathcal M_T(X), \ h^\pi(\mu)=\sup_{\nu\in \mathcal M_S(Y), \, \pi\nu=\mu}h(\nu).$$

We may define in the same way the entropy function associated to the symbolic extension of a topological flow. 
Then the fundamental theorem in the theory of symbolic extension is the following characterization of the entropy function $h^\pi$ in terms of superenvelopes.

\begin{thm}[\cite{BDz2004}, Theorem 5.5]\label{thm:dicrete superenvelop}
	For a discrete topological system $(X,T)$, any function $h^\pi$ is an affine superenvelope and conversely for any affine superenvelope $E$ 	there is a symbolic extension $\pi$ with $h^\pi=E$.
\end{thm}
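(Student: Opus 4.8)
The plan is to prove the two implications separately. For the forward direction, that every extension entropy function $h^\pi$ is an affine superenvelope, I would first establish affinity. Given a convex combination $\mu=t\mu_1+(1-t)\mu_2$ in $\mathcal M_T(X)$, the inequality $h^\pi(\mu)\geq t h^\pi(\mu_1)+(1-t)h^\pi(\mu_2)$ follows by lifting near-optimal measures $\nu_i$ over $\mu_i$ and using that the entropy function $h$ on the subshift $(Y,S)$ is affine. For the reverse inequality I would take any $\nu$ with $\pi\nu=\mu$, pass to its ergodic decomposition, and use that factors of ergodic measures are ergodic together with uniqueness of the ergodic decomposition: the splitting of $\mu$ into $\mu_1,\mu_2$ splits the parameter space measurably, which splits $\nu$ as $t\nu_1+(1-t)\nu_2$ with $\pi\nu_i=\mu_i$; affinity of $h$ then gives $h(\nu)\leq th^\pi(\mu_1)+(1-t)h^\pi(\mu_2)$, and taking the supremum over $\nu$ finishes affinity.

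For the superenvelope property $\lim_k(h^\pi-h_k)^{\tilde{}}=h^\pi-h$, note first that $h^\pi\geq h$ since $h(\mu)=h(\pi\nu)\leq h(\nu)\leq h^\pi(\mu)$, and that $\lim_k(h^\pi-h_k)^{\tilde{}}\geq h^\pi-h$ is automatic because $(h^\pi-h_k)^{\tilde{}}\geq h^\pi-h_k\geq h^\pi-h$. The content is the reverse inequality, for which the decisive input is that $(Y,S)$ is expansive: its entropy function $h_Y=h^\pi\circ\pi$ is upper semi-continuous and, for an entropy structure $(g_k)_k$ of $Y$, the tail defects $h_Y-g_k$ have upper semi-continuous envelopes vanishing in the limit. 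I would transfer this control through $\pi$ using the comparison between an entropy structure $(h_k)_k$ of $(X,T)$ and the pushforward behaviour of $(g_k)_k$, so that the vanishing residual entropy of the expansive extension forces $\lim_k(h^\pi-h_k)^{\tilde{}}\leq h^\pi-h$.

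For the converse I would first reduce to the zero-dimensional case. By \cite{downarowiczHuczek2013zero} there is a zero-dimensional principal extension $\rho:(X',T')\to(X,T)$; since $\rho$ is principal the pullback $E'=E\circ\rho$ is an affine superenvelope for $(X',T')$ (principal extensions preserve entropy and, up to uniform equivalence, the entropy structure). A symbolic extension $\pi':(Y,S)\to(X',T')$ with $h^{\pi'}=E'$ then yields $\pi=\rho\circ\pi'$ with $h^\pi(\mu)=\sup_{\rho\mu'=\mu}h^{\pi'}(\mu')=\sup_{\rho\mu'=\mu}E(\rho\mu')=E(\mu)$, so it suffices to treat zero-dimensional $(X,T)$. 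There I would fix a refining sequence of clopen partitions $(P_k)_k$ inducing the entropy structure $h_k=h(\cdot,P_k)$, and build the extending subshift as an inverse limit of finite symbolic covers: at scale $k$ the alphabet records a coding of $P_k$-names along orbit blocks together with auxiliary bookkeeping symbols, the number of which permitted on a block is throttled so that the fiber entropy added at scale $k$ matches the prescribed increment of $(E-h_k)$ over $(E-h_{k+1})$. The superenvelope identity is exactly what guarantees these increments can be allocated by a genuine subshift with upper semi-continuous fiber entropy, while affinity of $E$ makes the allocation consistent under ergodic decomposition.

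I expect the converse construction to be the main obstacle, and within it the passage from the abstract superenvelope inequality to a concrete combinatorial budget of coding symbols. The delicate point is to match the supremum of fiber entropies to $E$ \emph{exactly}: the throttling must waste no entropy, or else $h^\pi<E$, yet permit no excess, or else $h^\pi>E$. Controlling this along the transfinite order of accumulation of the defects $h-h_k$ is where the heaviest work lies and where the full apparatus of \cite{BDz2004}, rather than the soft measure-theoretic arguments sufficing for the forward direction, is genuinely required.
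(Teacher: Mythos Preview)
The paper does not prove this theorem; it is quoted verbatim from Boyle--Downarowicz \cite{BDz2004} as a black-box tool and is used in Section~\ref{appli} to establish the flow analogues (Theorems \ref{main thm1 symbolic extension} and \ref{main thm2 symbolic extension}) and in Proposition~\ref{first}. There is therefore no ``paper's own proof'' to compare your proposal against.

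That said, your sketch is broadly faithful to the architecture of the original Boyle--Downarowicz argument: the forward direction is indeed soft (affinity via fiberwise decomposition, the superenvelope inequality from expansiveness of the subshift), and the converse does proceed by reducing to a zero-dimensional base and then building the extension as a nested sequence of symbolic approximations whose added fiber entropy at each stage is budgeted by the increments $E-h_k$. Your diagnosis of the hard step---matching $h^\pi$ to $E$ exactly rather than merely bounding it---is accurate; in \cite{BDz2004} this is handled by a careful marker/spacer construction together with a transfinite ``repair'' argument tracking the order of accumulation of the tails, and it occupies the bulk of that paper. One point to flag: the reduction to zero dimensions via a principal extension requires knowing that principal extensions preserve the uniform equivalence class of entropy structures (so that $E\circ\rho$ is again a superenvelope), which is a nontrivial ingredient you are taking for granted; and the surjectivity of $\rho_*$ on invariant measures is needed for $\sup_{\rho\mu'=\mu}E(\rho\mu')=E(\mu)$, which holds but deserves mention.
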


The first author proved the corresponding statement for topological flows with the small flow boundary property (Theorem 3.6 in \cite{burguet2019symbolic}). The existence of  a zero dimensional extension given by Theorem \ref{main thm} implies the general following version : 

\begin{thm}\label{main thm1 symbolic extension}
	For a  topological flow  $(X,\Phi)$, any function $h^\pi$ is an affine superenvelope and conversely for any affine superenvelope $E$ 	there is a symbolic extension $\pi$ with $h^\pi=E$.
\end{thm}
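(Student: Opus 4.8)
The plan is to reduce the statement for flows to the already-known discrete statement of Theorem~\ref{thm:dicrete superenvelop} by passing through a zero-dimensional principal extension supplied by Theorem~\ref{main thm}, exactly as in the outline indicated before the statement. First I would treat the easy direction: if $\pi:(Y,\Psi)\to(X,\Phi)$ is a symbolic extension, then $h^\pi$ is an affine superenvelope. Here one shows that $h^\pi$ dominates the entropy function $h^\Phi$, is affine in $\mu$ (the supremum over the fibre of an affine function, realized by a measurable selection of invariant measures on the symbolic suspension, or via the standard ergodic decomposition argument), and satisfies the superenvelope inequality $\lim_k (h^\pi - h_k)^{\widetilde{}} = h^\pi - h$ for an entropy structure $\mathcal H = (h_k)_k$ of $(X,\Phi)$. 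This part is essentially a transcription of the discrete argument of \cite{BDz2004}; the one genuinely flow-specific ingredient is the behaviour of entropy structures under suspension and the Abramov formula \eqref{eq:entropy suspension flow}, both of which are available from \cite{burguet2019symbolic}.

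For the converse---given an affine superenvelope $E$ on $\mathcal M_\Phi(X)$, produce a symbolic extension with $h^\pi = E$---I would proceed in three steps. Step one: by Theorem~\ref{main thm}, fix a zero-dimensional principal extension $\rho:(\widetilde X,\widetilde\Phi)\to(X,\Phi)$ whose roof function is constant equal to $1$, i.e. $(\widetilde X,\widetilde\Phi) = (Z^{\mathbbm 1}, S^{\mathbbm 1})$ for a zero-dimensional invertible system $(Z,S)$. Since $\rho$ is principal, it induces a homeomorphism $\rho_*$ between invariant measures that preserves entropy, so $E$ pulls back to an affine superenvelope $\widetilde E := E\circ\rho_*$ on $\mathcal M_{\widetilde\Phi}(\widetilde X)$ (principality ensures entropy structures correspond and upper semicontinuous envelopes are preserved under the homeomorphism of simplices). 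Step two: transfer $\widetilde E$ to the base system $(Z,S)$. Because $(Z^{\mathbbm 1},S^{\mathbbm 1})$ is a suspension with constant roof $1$, the homeomorphism $\Theta$ of Section~\ref{sec:Suspension} identifies $\mathcal M_{S^{\mathbbm 1}}(Z^{\mathbbm 1})$ with $\mathcal M_S(Z)$, and by \eqref{eq:entropy suspension flow} with $f\equiv 1$ it preserves entropy exactly; hence $E_Z := \widetilde E\circ\Theta$ is an affine superenvelope of $(Z,S)$. Step three: apply Theorem~\ref{thm:dicrete superenvelop} to $(Z,S)$ and $E_Z$ to get a subshift extension $q:(W,R)\to(Z,S)$ with $h^q = E_Z$; then the suspension $(W^{\mathbbm 1},R^{\mathbbm 1})$ is a symbolic suspension flow, $\widehat q:(W^{\mathbbm 1},R^{\mathbbm 1})\to(Z^{\mathbbm 1},S^{\mathbbm 1})$ is a symbolic extension (this is the construction of Section~\ref{rappel} run in the symbolic category), and composing with $\rho$ gives $\pi := \rho\circ\widehat q:(W^{\mathbbm 1},R^{\mathbbm 1})\to(X,\Phi)$, a symbolic extension of the flow.

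It then remains to verify $h^\pi = E$. One chases the identity through the two measure-space homeomorphisms: $h^{\widehat q} = h^q\circ\Theta_W^{-1}$ reads off from the suspension formula and the fibrewise description of invariant measures on $W^{\mathbbm 1}$ (every $R^{\mathbbm 1}$-invariant measure is $\Theta_W$ of an $R$-invariant measure), giving $h^{\widehat q} = \widetilde E$ after unwinding $\Theta$; and $h^\pi = h^{\widehat q}\circ(\text{fibre of }\rho)$, but since $\rho$ is principal and isomorphic on measures the fibre is a single measure and entropy is preserved, so $h^\pi = \widetilde E\circ\rho_*^{-1} = E$.

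The main obstacle, and the place where care is needed, is not the logical skeleton---which is a clean ``pull back the superenvelope, solve on a zero-dimensional/symbolic base, push forward'' argument---but the bookkeeping that the relevant structures genuinely transport along the maps $\rho$, $\Theta$, $\widehat q$: namely that a principal extension of flows induces a bijection of invariant-measure simplices under which entropy structures and hence superenvelopes correspond, and that the suspension operation with constant roof $1$ is an entropy-preserving isomorphism on the level of invariant measures and entropy structures. All of these facts are either proved in \cite{burguet2019symbolic} or follow from the Abramov formula \eqref{eq:entropy suspension flow} and the lemma preceding Proposition~\ref{prop:zero-dimensional principal extension suspension flow}; assembling them correctly, and checking that ``$h^\pi$ is an affine superenvelope'' in the flow setting really does follow from the discrete case via these same transfers, is the technical heart of the proof.
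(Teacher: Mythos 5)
Your overall strategy is exactly the one the paper intends: the paper offers no detailed argument for this theorem, stating only that the zero-dimensional principal extension of Theorem \ref{main thm} upgrades Theorem 3.6 of \cite{burguet2019symbolic} (proved there under the small flow boundary property) to general flows, and your three-step reduction through $(Z^{\mathbbm 1},S^{\mathbbm 1})$ and Theorem \ref{thm:dicrete superenvelop} is the right way to flesh that out.

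There is, however, a concrete wrong step in your justification. You assert that, because $\rho$ is principal, it ``induces a homeomorphism $\rho_*$ between invariant measures'', and later that ``the fibre is a single measure''. That is the definition of an \emph{isomorphic} extension, not a principal one; the implication goes the other way (see the diagram in Section \ref{exten}), and the Downarowicz--Huczek-type extension produced by Theorem \ref{main thm} is only principal, so $\rho_*:\mathcal M_{\widetilde\Phi}(\widetilde X)\to\mathcal M_\Phi(X)$ is merely an affine continuous surjection with, in general, nontrivial fibres: $E\circ\rho_*$ is not a pullback along a homeomorphism and $\rho_*^{-1}$ does not exist. This is precisely the point where \cite{burguet2019symbolic} used the small flow boundary property (which yields a strongly isomorphic extension) and which must now be replaced by something else. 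The repair is the standard one from the discrete theory: for a principal extension the lifted sequence $\mathcal H^X\circ\rho_*$ is uniformly equivalent to any entropy structure of the extension, whence $E$ is an affine superenvelope of $(X,\Phi)$ if and only if $E\circ\rho_*$ is one of $(\widetilde X,\widetilde\Phi)$; and in the final computation one does not need injectivity of $\rho_*$, only that $h^{\widehat q}$, being equal to $E\circ\rho_*$ transported by $\Theta$, is constant on each fibre of $\rho_*$, so that $h^\pi(\mu)=\sup_{\nu:\,\rho_*\nu=\mu}h^{\widehat q}(\nu)=E(\mu)$ regardless of how large the fibre is. With that substitution your argument goes through; as written, the step ``principal $\Rightarrow$ bijective on invariant measures'' is false and would invalidate both the transport of the superenvelope and the final identification of $h^\pi$.
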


Similarly, we  relate the symbolic extensions of a general topological flow with the symbolic extensions of the discrete systems given by its time-$t$ maps (see Lemma 3.19 in \cite{burguet2019symbolic}).

\begin{thm}\label{main thm2 symbolic extension}
A topological flow admits a symbolic extension (resp. principal) if and only if $\phi_t$ admits a symbolic extension (resp. principal) for some (any) $t\neq 0$.
\end{thm}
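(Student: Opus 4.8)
The plan is to reduce the flow statement to the known discrete characterization (Theorem \ref{main thm1 symbolic extension}) together with the Abramov formula \eqref{eq:entropy suspension flow} and the relation between entropy structures of a flow and of its time-$t$ maps. First I would recall from \cite{burguet2019symbolic} that, for a topological flow $(X,\Phi)$, the entropy structure of $\Phi$ and the entropy structure of the discrete system $(X,\phi_t)$ (for fixed $t\neq 0$) are related by the rescaling $\mathcal H^\Phi = \tfrac1{|t|}\mathcal H^{\phi_t}$ up to uniform equivalence of entropy structures; consequently $E$ is an affine superenvelope for $\Phi$ if and only if $|t|E$ is an affine superenvelope for $\phi_t$. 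Combined with Theorem \ref{thm:dicrete superenvelop} (applied to $\phi_t$), this already shows that $\Phi$ admits a symbolic extension iff $\phi_t$ admits one, and that the superenvelope inequality $E\geq h_\Phi$ corresponds exactly to $|t|E \geq h_{\phi_t}$, so the ``principal'' case ($E = h_\Phi$, i.e. the flow's own entropy function is a superenvelope) transfers as well.

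Second, I would make the correspondence of extensions explicit in both directions so that the ``resp. principal'' clause is genuinely addressed. Given a symbolic extension $\pi\colon (Y,S)\to (X,\phi_t)$ of the time-$t$ map, one forms the suspension $(Y^g,S^g)$ with roof $g = t\cdot\mathbbm 1$ (so that the suspension is conjugate, after rescaling time, to a suspension over the subshift); the induced map $\widehat\pi\colon (Y^g,S^g)\to (X^{t\mathbbm 1},(\phi_t)^{t\mathbbm 1})$ followed by the natural factor map $(x,s)\mapsto \phi_s(x)$ onto $(X,\Phi)$ exhibits a symbolic extension of the flow, exactly as in the proof of Theorem \ref{main thm}. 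Conversely, if $\Pi\colon (W,\Psi)\to(X,\Phi)$ is a symbolic extension of the flow by a suspension $W = Z^f$ over a subshift $(Z,\sigma)$, then the time-$t$ map $\Psi_t$ is a topological extension of $\phi_t$; passing to a zero-dimensional principal extension of $(W,\Psi_t)$ and invoking the discrete characterization for $\phi_t$ (or directly Lemma 3.19 of \cite{burguet2019symbolic}) produces a subshift extension of $\phi_t$. At each step the Abramov formula and the harmonicity of the entropy function guarantee that the entropy defect $h^\pi - h$ is multiplied by $|t|$ under suspension and divided by $|t|$ under taking time-$t$ maps, so ``principal'' is preserved throughout.

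Third, for the ``some (any)'' phrasing I would note that once the equivalence is established for one particular $t_0\neq 0$, it holds for every $t\neq 0$: indeed $\phi_t = (\phi_{t_0})^{t/t_0}$ in the sense of a real power of the time-$t_0$ map along the flow, and the map $s\mapsto \phi_s\mu$ used in the proof of Theorem \ref{main thm} shows that the flow-entropy of an invariant measure is controlled by, and controls, the $\phi_t$-entropy of its disintegrations uniformly in $t$; alternatively one simply applies the already-proven flow statement twice, once with $t$ and once with $t_0$, and composes. This removes any dependence on the choice of $t$.

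The main obstacle I expect is the careful bookkeeping of entropy under the suspension/time-$t$ passage, specifically verifying that a superenvelope for $\Phi$ really does pull back to a superenvelope for $\phi_t$ at the level of upper semicontinuous envelopes (not merely pointwise): this requires that the identification $\mathcal M_\Phi(X)\cong \mathcal M_{\phi_t}(X)$ (via $\mu\mapsto$ the $\phi_t$-invariant measure it induces, using that the flow's invariant measures are the $\phi_t$-invariant ones fixed by the whole $\R$-action, and that on the suspension side the Abramov correspondence $\Theta$ is a homeomorphism) is compatible with the weak-$*$ topologies and with the rescaling of entropy structures from \cite{burguet2019symbolic}. Once that compatibility is in hand — and it is essentially the content of the cited Lemma 3.19 — the rest is a direct translation through Theorem \ref{thm:dicrete superenvelop} and Theorem \ref{main thm1 symbolic extension}.
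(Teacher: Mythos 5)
Your overall strategy---reducing to the discrete characterization via the suspension-with-constant-roof construction in one direction, and via Lemma 3.19 of \cite{burguet2019symbolic} in the other---is the same as the paper's, which in fact states this theorem without a detailed proof, as a consequence of Theorem \ref{main thm} combined with that lemma. The explicit construction in your second paragraph (suspend the subshift extension of $\phi_t$ with constant roof, map onto $(X^{t\mathbbm 1},(\phi_t)^{t\mathbbm 1})$, then onto $(X,\Phi)$, and track entropy through the Abramov formula and the averaging computation from the proof of Theorem \ref{main thm}) is exactly the right argument for the direction ``$\phi_t$ admits a (principal) symbolic extension $\Rightarrow$ $\Phi$ does''.

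There is, however, a genuine error underlying your first and third paragraphs: $\mathcal M_\Phi(X)$ is in general a \emph{proper} subset of $\mathcal M_{\phi_t}(X)$. (For a suspension flow with constant roof $1$, every slice measure $\mu\times\delta_s$ is $\phi_1$-invariant but not flow-invariant; flow-invariance is \emph{not} the same as being a $\phi_t$-invariant measure ``fixed by the whole $\mathbb R$-action'' unless you impose that extra condition, which most $\phi_t$-invariant measures fail.) Consequently there is no homeomorphism $\mathcal M_\Phi(X)\cong\mathcal M_{\phi_t}(X)$, and the rescaling $\mathcal H^{\Phi}=\frac1{|t|}\mathcal H^{\phi_t}$ of entropy structures does not make sense as stated: a superenvelope for $\phi_t$ must dominate the entropy defect on the strictly larger space $\mathcal M_{\phi_t}(X)$, so the claim that the rescaling ``already shows'' the equivalence is unjustified. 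Handling the extra $\phi_t$-invariant measures is precisely the nontrivial content here: one needs, for instance, that for $\nu\in\mathcal M_{\phi_t}(X)$ the average $\bar\nu=\frac1t\int_0^t\phi_s\nu\,ds$ is flow-invariant with $h_{\phi_t}(\bar\nu)\geq h_{\phi_t}(\nu)$ (the harmonicity argument in the proof of Theorem \ref{main thm}), together with the fact that the time-$t$ map of a \emph{symbolic} suspension flow admits a principal symbolic extension as a discrete system (the substance of Lemma 3.19 in \cite{burguet2019symbolic}); merely ``passing to a zero-dimensional principal extension of $(W,\Psi_t)$'' does not by itself produce a subshift extension. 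Your proposal cites the right lemma but does not supply, and partly misstates, the step that makes the reduction work.
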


We recall some  terminology of the theory of symbolic extensions, which will be used in the next sections. Firstly the (topological) symbolic extension entropy  $h_{sex}(T)$ (resp. $h_{sex}(\Phi)$) of a topological system  (resp. flow) is the infimum of the topological entropy over all its symbolic extensions. The corresponding measure theoretic quantity  is the real function $h_{sex}$ defined on the set of  invariant measures  as the infimum of the functions $h^\pi$ over all the symbolic extensions $\pi$.

\subsection{Invariance under orbit equivalence}

Two topological flows $(X, \Phi)$ and $(Z, \Psi)$ are said to be {\it orbit equivalent} when there is a homeomorphism $\Gamma$ from $X$ onto $Z$ mapping $\Phi$-orbits to $\Psi$-orbits, preserving their orientation. In other words, the topological flows $(X, \Phi)$ and $(X, \widehat{\Phi})$ with $\widehat{\Phi}=(\widehat{\phi}_t)_t=(\Gamma^{-1}\circ\phi_t\circ \Gamma)_t$ has the same orbits with the same direction, i.e. 
$$
\{\phi_t(x): t\in \R_{\ge 0} \}=\{\widehat{\phi}_t(x): t\in \R_{\ge 0} \}, \forall x\in X.$$

 Assume that $(X, \Phi)$ is regular. Then we can define the continuous map $\theta: X\times \R_{\ge 0} \to \R_{\ge 0}$ with the following properties (see \cite{beboutoff1940mesure}):
\begin{itemize}
	\item [(i)] $\phi_t(x)=\widehat{\phi}_{\theta(x,t)}(x)$,
	\item [(ii)] $\theta(x, t+s)=\theta(x, s)+\theta(\phi_s(x), t)$,
	\item [(iii)] $\theta(x,0)=0$ and $\theta(x,t)$ is strictly increasing in $t$.
\end{itemize}

	Zero and infinite   topological entropy are invariants for orbit equivalent regular flows \cite{ohno1980weak}. 
In \cite{burguet2019symbolic} the first author showed existence of symbolic extensions is preserved under orbit equivalence for topological regular flows with the small boundary property. Here by using the zero-dimensional principal extension built in Theorem \ref{main thm} we remove this extra assumption. The proof  differs completely from the one given in \cite{burguet2019symbolic} for flows with the small  flow boundary property; it  is not a direct consequence of the existence of principal zero dimensional extension as in the above Theorem \ref{main thm1 symbolic extension} and Theorem \ref{main thm2 symbolic extension}.

\begin{thm}
	The existence of symbolic extensions (resp. principal) is preserved by orbit equivalence for regular flows.
\end{thm}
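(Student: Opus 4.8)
The plan is to reduce the problem to the level of time-$t$ maps and invoke Theorem \ref{main thm2 symbolic extension}, but the obstruction is that orbit equivalence does \emph{not} preserve the time-$t$ maps --- only the orbits. The bridge is the classical fact that two orbit equivalent regular flows become conjugate after a time change, and that a time change is realized concretely at the level of suspension flows by changing the roof function. So the first step is to use the Beboutoff--type cocycle $\theta$ and properties (i)--(iii) above: given a symbolic extension $\pi\colon (Y,\Psi_0)\to(X,\widehat\Phi)$ by a symbolic suspension flow, I want to produce one for $(X,\Phi)$. After replacing $(X,\widehat\Phi)$ by a conjugate copy we may assume $\widehat\phi_t=\Gamma^{-1}\phi_t\Gamma$, so it suffices to transfer a symbolic extension across the time change relating $\Phi$ and $\widehat\Phi$ on the same space $X$.

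Second, I would observe that both flows admit zero-dimensional principal extensions by Theorem \ref{main thm}, hence (after composing) we may assume $(X,\Phi)$ --- and likewise its time-changed partner --- is already a suspension flow $(Z^f,T^f)$ over a zero-dimensional invertible system $(Z,T)$. The key structural point is then: a time change of a suspension flow $(Z^f,T^f)$ is itself conjugate to a suspension flow $(Z^g,T^g)$ over the \emph{same} base $(Z,T)$, with a roof function $g$ that is cohomologous-up-to-the-return-time to $f$; concretely $g(z)=\int_0^{f(z)}\alpha(z,s)\,ds$ for the positive continuous speed function $\alpha$ coming from $\partial_t\theta$. Since symbolic extensions of $(Z^f,T^f)$ correspond (via the Abramov formula \eqref{eq:entropy suspension flow} and the argument of Theorem \ref{main thm2 symbolic extension}) to symbolic extensions of the base $(Z,T)$, and the base is unchanged under the time change, existence of a symbolic extension for $(Z^f,T^f)$ is equivalent to existence of one for $(Z^g,T^g)$. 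Chasing the equivalences: $(X,\Phi)$ has a symbolic extension $\iff$ the base $(Z,T)$ does $\iff$ $(X,\widehat\Phi)$ has one.

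Third, for the \emph{principal} case I must be more careful, because a time change distorts entropies of flow-invariant measures by the factor $\int f\,d\mu / \int g\,d\mu$, so principality is not obviously preserved by the naive argument. Here the right statement is that principality of a symbolic extension of a suspension flow $(Z^f,T^f)$ is \emph{equivalent} to principality of the induced symbolic extension of the base $(Z,T)$ --- this follows again from \eqref{eq:entropy suspension flow}, since for $\mu\in\MM_T(Z)$ and a symbolic extension $p\colon(W,R^{f\circ q})\to(Z^f,T^f)$ lifting an extension $q\colon(W,R)\to(Z,T)$, one has $h_{R^{f\circ q}}(\Theta(\nu))=h_R(\nu)/\int f\circ q\,d\nu$ and $h_{T^f}(\Theta(\mu))=h_T(\mu)/\int f\,d\mu$, and $\int f\circ q\,d\nu=\int f\,d\mu$ when $q\nu=\mu$; so the flow extension is principal iff $h_R(\nu)=h_T(\mu)$ for all such lifts, i.e.\ iff $q$ is principal. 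Since the base $(Z,T)$ is insensitive to the time change, principality transfers across orbit equivalence exactly as existence does.

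The main obstacle I anticipate is making the second step fully rigorous: one must check that the time change of a zero-dimensional \emph{suspension} flow is genuinely conjugate, \emph{as a flow}, to another suspension flow over the same zero-dimensional base --- i.e.\ that the new roof function $g$ is continuous and strictly positive (clear from continuity and positivity of the speed function, which in turn requires $\partial_t\theta>0$ and continuous, a point where one should invoke the regularity of the flows and the smoothness properties of $\theta$), and that the conjugating homeomorphism is well-defined on the quotient $Z^g$. A secondary subtlety is that orbit equivalence as defined gives $\Gamma$ only mapping orbits to orbits preserving orientation; upgrading this to the cocycle identities (i)--(iii) uses regularity (no fixed points) essentially, which is precisely why the theorem is stated for regular flows and, as the introduction warns, fails for singular ones.
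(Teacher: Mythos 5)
Your overall strategy is the same as the paper's: exhibit both flows as quotients of suspension flows over a common discrete base with two different roof functions built from the cocycle $\theta$, and then transfer symbolic extensions through the base. (The paper does this more directly than you do: it suspends the time-one map $\phi_1$ on $X$ itself, with roofs $\mathbbm{1}$ and $g=\theta(\cdot,1)$, rather than first passing to a zero-dimensional extension; your extra zero-dimensionalization step forces you to also lift the time change to the extension, which is avoidable.) For bare existence your chain of equivalences is essentially sound, since a symbolic extension of \emph{any} topological extension of $(X,\widehat\Phi)$ descends to one of $(X,\widehat\Phi)$ by composition, and the reverse implication follows by symmetry of orbit equivalence.

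The genuine gap is in the last link of your chain: you need the time-changed suspension flow (the paper's $(X^g,(\phi_1)^g)$, your $(Z^g,T^g)$) to be a \emph{principal} extension of $(X,\widehat\Phi)$, and you never prove this. Your third paragraph addresses a different issue --- principality of a symbolic extension of a suspension flow relative to its \emph{base}, which indeed follows from Abramov --- but that does not give principality of the factor map $\widehat\pi\colon (X^g,(\phi_1)^g)\to (X,\widehat\Phi)$, $(x,t)\mapsto \widehat\phi_t x$. Without it, ``$(Z^g,T^g)$ has a (principal) symbolic extension'' does not yield ``$(X,\widehat\Phi)$ has a principal one,'' nor the converse. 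This is precisely where the paper spends almost all of its effort: by Ledrappier--Walters one must show $h_{\mathrm{top}}(\psi_1,\widehat\pi^{-1}z)=0$ for every $z$, and the fiber $\widehat\pi^{-1}z\subset\{(\widehat\phi_{-t}z,t):0\le t\le \max g\}$ is a \emph{non-invariant} orbit segment, so one cannot invoke the orbit-equivalence invariance of zero entropy for invariant sets. Controlling the Bowen entropy of this segment under the time-one map of the \emph{new} suspension requires the quantitative covering argument of Claims 1 and 2 (controlling oscillations of the Birkhoff sums $S_ng$ along short orbit arcs in the Bowen--Walters metric, giving at most $2^k$ separated points per piece and hence entropy at most $\tfrac{\log 2}{N\min g}\to 0$). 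Your ``anticipated obstacle'' (continuity and positivity of the new roof, well-definedness on the quotient) is not where the difficulty lies; those points are immediate from property (iii) of $\theta$. One could imagine closing the gap measure-theoretically instead, via Abramov's entropy formula for time changes of flows applied simultaneously upstairs and downstairs, but that argument is not in your write-up either; as it stands, the ``resp.\ principal'' half of the theorem is unproven.
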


\begin{proof}
	Let $\Phi$ and $\widehat{\Phi}$ be two regular topological flows  on a compact metric space $X$ with the same orbits  and let  $\theta$ be defined as above. According to Theorem \ref{main thm} the suspension flow $(X^{\mathbbm{1}}, (\phi_1)^{\mathbbm{1}})$ is a principal extension of $(X, \Phi)$ via the factor map $\pi: (x, t)\mapsto \phi_t x$. On the other hand, the function  $g:x\mapsto \theta(x,1)$ is continuous and positive on $X$ (due to (iii)). Then by the fact that $\widehat{\phi}_{\theta(x,1)}(x)=\phi_1(x)$, the suspension flow $(X^g, (\phi_1)^g)$ is the extension of $(X, \widehat{\Phi})$ via the factor map $\widehat{\pi}: (x, t)\mapsto \widehat{\phi}_tx$ :
	%the suspension flow, say (\hat{Y}, \hat{\Psi}), of (X, \Phi_1) under the function \theta(\cdot, 1): X \to \mathbb{R}_{>0} is an extension of (X, \hat{\Phi}) (Notice that \hat{\Phi}_{\theta(x,1)}(x)=\Phi_1(x)). 
	\begin{equation*}
	\xymatrix{
		(X^{\mathbbm{1}}, (\phi_1)^{\mathbbm{1}}) \ar[d]_{\pi}  & (X^g, (\phi_1)^g)  \ar[d]_{\widehat{\pi}} \\
		(X, \Phi) & (X, \widehat{\Phi}) 
	}
	\end{equation*}
	We claim that $(X^g, (\phi_1)^g)$ is a principal extension of $(X, \widehat{\Phi})$. To simplify the notations we denote by $\Psi=(\psi_t)_t$  the suspension flow  $(\phi_1)^g$ on $X^g$. By Ledrappier-Walters formula \footnote{Let $\pi: (X,T)\to (Y,S)$ be an extension. Then for any $S$-invariant measure $\nu$, we have that $$ \sup_{\mu: \pi \mu=\nu } h_{\mu}(T)=h_{\nu}(S)+\int_Y h_{\text{top}}(\pi^{-1}y) d\nu(y). $$ } \cite{LedWal77}, it is enough to check $h_{\text{top}}(\psi_1,\widehat{\pi}^{-1}z)=0$ for all $z\in X$. %$\Psi$-recurrent points  
	Fix $z\in X$. Since
	the map $\widehat{\pi}:X^g\rightarrow X$ is given by $\widehat{\pi}(x,t)=\widehat{\phi}_tx$ for $x\in X$ and $0\leq t<g(x)$, the set
	$\widehat{\pi}^{-1}z$ is contained in the set
	$$
	\{(\widehat{\phi}_{-t}z, t ): 0\le t\le  \max g  \}.
	$$

% We define
	%$$
	%\rho(x,x')=\max_{0\le u\le 1} \rho_u(x,x').
%	$$
	  We recall that $d_g$ denotes the Bowen-Walters metric  on $X^g$.
	Let $\min g>\epsilon>0$ and $N\in \mathbb N^*$. We will show the $h_{top}(\widehat{\pi}^{-1}z,c\epsilon)\leq \frac{\log 2}{N\min g}$ with $c=1+\frac{3\max g}{(\min g)^2}$, where $h_{top}(\widehat{\pi}^{-1}z,c\epsilon)$ is the Bowen  entropy of $\widehat{\pi}^{-1}z$  for $d_g$ at the scale $c\epsilon$ with respect to $\psi_1$, i.e. 
	$$h_{top}(\widehat{\pi}^{-1}z,c\epsilon)=\limsup_n\frac{1}{n}\log \max \{\sharp E_n, \ E_{n} \text{ is } (c\epsilon, n)-\text{separated for }d_g  \text{ w.r.t. }\psi_1 \}.$$
	  This will conclude the proof as $\epsilon$ and $N$ are chosen arbitrarily and $h_{top}(\widehat{\pi}^{-1}z)=\lim_{\epsilon\rightarrow 0}h_{top}(\widehat{\pi}^{-1}z,c\epsilon)$.

By uniform continuity of $g$ and the flow map, there are  $\eta, \delta>0$ which can be assumed less than $\frac{\epsilon}{2}$ such that 
$$|g(x)-g(y)|<\frac{\epsilon}{4N} \ \forall x,y \text{ with }d(x,y)<\eta$$
and 
$$d\left(\phi_{t}z,\phi_sz\right)<\eta \ \forall t,s\in \mathbb{R} \text{ with } |t-s|<\delta.$$	
We let $S_ng$ be the Birkhof sum 
$$S_ng(x)=\sum_{k=0}^{n-1}g\circ\phi_{k}(x).$$ 
Observe that 
$$
S_{n+m}g=S_ng+S_mg\circ\phi_n.
$$
We denote by $|J|$ the diameter of a subset $J$ of $\mathbb R$. From the choice of $\delta$ the set $|S_ng(\phi_Iz)|:=\{S_ng(\phi_tz), \ t\in I\}$ satisfies  $|S_ng(\phi_Iz)|<\frac{\epsilon}{4}$ for any subset  $I$ with $|I|<\delta$ and any  integer $0\leq n\leq N$. 

\vspace{10pt}

\text{\bf Claim 1. } For any subset $I$ of $\R$ with $|I|<\delta$, we can  find a cover  $\mathcal F_k=\mathcal F_k(I)$ of $I$ with $\sharp \mathcal F_k\leq 2^k$ satisfying  $|S_{n}g ( \phi_{J}z )|<\frac{\epsilon}{2}$ for all $k\in \N$, all $n\leq kN$ and $J\in \mathcal F_k$.

\vspace{10pt}

\begin{proof}[Proof of Claim 1]
	Fix such an interval $I$ with $|I|<\delta$. We will argue by induction on $k$. For $k=1$ we let $\mathcal F_1=\{I\}$. 
	%By induction on $k$ we may find a family $\mathcal F_k=\mathcal F_k(I)$ of $2^k$ subsets of $I$ with $|S_{lN}g ( \phi_{J}z )|<\frac{\epsilon}{2}$ for all $l\leq k$ and $J\in \mathcal F_k$. For $k=1$ we let $\mathcal F_1=\{I\}$. 
	Assume $\mathcal F_k$ already built and take $J\in 
	\mathcal F_k$. Then we may cover $J$ by two subsets $J^1$ and $J^2$ in such a way the diameter of  $S_{kN}g ( \phi_{J^i}z )$ is less than 
	$\frac{\epsilon}{4}$ for $i=1,2$. Since the diameter of $J^i+kN$ is less than $\delta$, we have also  $|S_{j}g ( \phi_{J^i+kN}z )|<\frac{\epsilon}{4}$ for every $0\le j\le N $ and therefore $$|S_{kN+j}g ( \phi_{J^i}z )|\le |S_{kN}g ( \phi_{J^i}z )| +|S_{j}g ( \phi_{J^i+kN}z )| < \frac{\epsilon}{2}, \forall 0\le j\le N. $$
	%Since the diameter of $J+kN$ is less than $\delta$, we have  $|S_{N}g ( \phi_{J+kN}z )|<\frac{\epsilon}{2}$ and therefore $|S_{(k+1)N}g ( \phi_{J}z )|<\epsilon$.  
	We conclude by letting $\mathcal F_{k+1}=\{J^1, J^2 \ : \ J\in \mathcal F_k \}$. 
\end{proof}

We go back to the proof of $h_{top}(\widehat{\pi}^{-1}z,c\epsilon)\leq \frac{\log 2}{N\min g}$. The flow $\Phi$ and $\hat \Phi$ being 
orbit equivalent, there  are finite families $\mathcal I$ and $\mathcal K$ of intervals with length respectively less than $\delta$ and $\epsilon/2$ such that $ \{(\widehat{\phi}_{-t}z, t ): 0\le t< \max g \}$ is contained in $\bigcup_{I\in \mathcal I, \, K\in \mathcal K}\phi_Iz\times K$. 

% Notice that  
%\begin{equation}
%\sharp \mathcal I \le \left[\frac{1}{\delta}\right]+1~\text{and}~\sharp \mathcal K \le \left[\frac{\sup g}{\epsilon}\right]+1.
%\end{equation}
%It means that $ \{(\widehat{\phi}_{-t}(z), t ): 0\le t< \|g\|_\infty \}$ by a union of sets $\phi_I(z)\times K$ whose cardinality is less than $\left(\left[\frac{1}{\delta}\right]+1 \right)\left(\left[\frac{\sup g}{\epsilon}\right]+1\right)+1. $

%Let $N'(>N)$ be the largest integer $n$ such that $|S_ng(\phi_I(z))|<\epsilon$. We may then divide $I$ into two subsets $I_1^1$ and 
%$I_2^{1}$ with $|S_{N'}g(\phi_{I_i^1}(z))|<\frac{\epsilon}{2}$ and therefore $|S_{n}g(\phi_{I_i^1}(z))|<\epsilon$ for $n<2N<N'+N$. By 
%induction on $k$ we define  families $\{I_i^k\}$ of nested subsets of $I$ such that $|S_{n}g(\phi_{I_i^k}(z))|<\epsilon$ for $n<kN$.
Let $m=[kN\min g]$ and let $E_m$ be a maximal $(m, c\epsilon)$-separated set inside $\widehat{\pi}^{-1}z$ for the time-one map $\psi_1$ of the suspension flow on  $(X^g, (\phi_1)^g)$.

\vspace{10pt}

\text{\bf Claim 2. } Any set of the form $\phi_Iz\times K$ with $|I|<\delta$ and $|K|<\epsilon/2$ contains at most $2^k$ points of $E_m$.

\vspace{10pt}

Suppose Claim 2 holds. Then we have
\begin{equation*}
\sharp E_m\le \sharp \mathcal{I} \sharp \mathcal{K} \cdot 2^k.
\end{equation*}  
We conclude that 
\begin{align*}
h_{top}(\widehat{\pi}^{-1}z,c\epsilon)
&=\limsup_m\frac{1}{m}\log \sharp E_m\\
&\leq \limsup_k \frac{k\log 2+\log \sharp \mathcal{I} \sharp \mathcal{K}}{kN\min g}= \frac{\log 2}{N\min 
	g}.
\end{align*}
 It remains to prove Claim 2.

\begin{proof}[Proof of Claim 2]
	Let $|I|<\delta$ and $|K|<\epsilon/2$.
	Let $x=(\phi_tz,s)$ and $x'=(\phi_{t'}z,s')$  in $\phi_Jz\times K$ for some $J\in \mathcal{F}_k(I)$. We will show that $x$ and $x'$ are $(c\epsilon,n)$-closed. This would imply that there is at most one point of $E_m$ in $\phi_Jz\times K$ and Claim 2 then follows.  By Claim 1, we have $|S_{n}g ( \phi_{J}z )|<\frac{\epsilon}{2}$ for all $n\leq kN$. For $l\leq m$ the point $x_l=(\phi_tz,s+l)$ of $X\times \mathbb R$ has for representative  in the quotient space $X^g$ the point $\left(\phi_{n+t}z, s_l \right)$ where $n$ is the largest integer with $s_l:=s+l-S_ng(\phi_tz)\geq 0$. As $l\leq kN\min g$ we have $n\leq kN$.  Similarly we may define similarly the integer $n'$ and the real number $s'_l$ associated to  $x'_l:=(\phi_{t'}z,s'+l)$. It follows from $|S_ng(\phi_Jz)|<\frac{\epsilon}{2}$, $|K|<\frac{\epsilon}{2}$ and $\epsilon<\min g$ that we have $|n-n'|\leq 1$. Let  $u_l=\frac{s_l}{g(\phi_{n+t}z)}$ and $u'_l=\frac{s'_l}{g(\phi_{n'+t'}z)}$ in $[0,1]$.  We consider the point  $x''_l$ in $X^g$ defined as $x''_l=(\phi_{n+t'}z,u_lg(\phi_{n+t'}z)$. The pairs $(x_l,x''_l)$ and $(x''_l,x'_l)
	$ are respectively horizontal and vertical. As $t,t'$ both lie in $I$ we have $|t-t'|<\delta$ so that  
\begin{align*}
|x_lx''_l|&= u_l d(\phi_{n+t}(z),\phi_{n+t'}(z))+(1-u_l)d(\phi_{n+1+t}(z),\phi_{n+1+t'}(z))\leq \epsilon.
\end{align*}	
	
Then the length $|x''_lx'_l|$ may be bounded from above  depending on $|n-n'|$ :
	\begin{itemize} 
		\item either $n=n'$. Then $|g(\phi_{n+t}z)-g(\phi_{n+t'}z)|<\epsilon$ and 
\begin{align*}
|s_l-s'_l|&\leq |s-s'|+|S_ng(\phi_tz)-S_ng(\phi_{t'}z )|,\\
&\leq \frac{\epsilon}{2}+\frac{\epsilon}{2}=\epsilon.
\end{align*}		
Therefore we get 
		\begin{align*}
		|x''_lx'_l|&= |u_l-u'_l|,\\
		&\leq \frac{ |s_l-s'_l|g(\phi_{n+t'}z)+ s_l |g(\phi_{n+t}z)-g(\phi_{n+t'}z)|}{g(\phi_{n+t'}z)g(\phi_{n+t}z)},\\
		&\leq 2\epsilon \frac{\max g}{(\min g)^2},
		\end{align*}
		
		\item or $|n-n'|=1$, say $n=n'+1$. Then $s_l<\epsilon$ and $|s_l+g(\phi_{n'+t'}z)-s'_l|<\epsilon$
		so that with the previous notations we get
		\begin{align*} 
		|x''_lx'_l|&=|u_l-u'_l+1|,\\
		&=\left|\frac{s_l}{g(\phi_{n+t}z)}-\frac{s'_l-g(\phi_{n'+t'}z)}{g(\phi_{n'+t'}z)}\right|,\\ 
        &\leq 		\left|\frac{s_l}{g(\phi_{n+t}z)}\right|+\left|\frac{s'_l-g(\phi_{n'+t'}(z))}{g(\phi_{n'+t'}z)}\right|,\\
		&\leq \frac{\epsilon}{\min g}+ \frac{2\epsilon}{\min g}\leq 3\epsilon \frac{\max g}{(\min g)^2}.
		\end{align*}
		\end{itemize}

		Consequently we get in any case 
		\begin{align*}
		d_g(x_l,x'_l)&\leq  |x_lx''_l|+|x''_lx'_l|,\\
		& \leq \epsilon +3\epsilon \frac{\max g}{(\min g)^2}=c\epsilon.
		\end{align*}
	
\end{proof}

	Assume that $(X, \Phi)$ admits a  symbolic extension. Then so does  $(X, \Phi_1)$ by Theorem \ref{main thm2 symbolic extension}. It follows from  that $(X^g, (\phi_1)^g)$ also has symbolic extension (resp. principal) by Lemma 3.18 in \cite{burguet2019symbolic}. Since  $(X^g, (\phi_1)^g)$ is a principal extension of $(X, \widehat{\Phi})$, the flow  $(X, \widehat{\Phi})$ admits a symbolic extension (resp. principal).	
\end{proof}

\section{Singular suspension flows}\label{sec:Singular suspension flows}

In Section \ref{sec:Suspension},  we have defined the suspension over a  discrete invertible topological system $(X,T)$ with a positive continuous roof function $f:X\to \R_{>0}$. 
Assume now $(X, T)$ has a fixed point $*$ and $f: X\to \mathbb{R}_{\geq 0}$ only vanishes on the  fixed point $*$.  Then we may define again the topological quotient space $X^f$ by (\ref{suspension}). We assume that $\sum_{k\in \mathbb N} f(T^kx)$ and $\sum_{k\in \mathbb N} f(T^{-k}x)$ goes uniformly to infinity out of  any neighborhood of $*$, i.e.  

\begin{align}\label{hypothesis} \forall U \text{ open with }*\in U,  \ \forall  M>0, \ \exists K\in \mathbb N, \, \text{s.t.} \nonumber \\
\forall x\in X\setminus U  \ \ \ \ \  \ \sum_{k=0}^K f(T^{k}x) , \, \sum_{k=0}^K f(T^{-k}x)>M.
\end{align}

\begin{lem} Under the hypothesis (\ref{hypothesis}) the time translation still induces a topological flow $T^f$ on $X^f$. Moreover the Bowen-Walters metric $d_f$ \footnote{We use the convention $u_{(*,0)}=0$.} is compatible with the quotient topology on $X^f$. 
\end{lem}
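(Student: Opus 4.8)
The plan is to establish two things: first, that the time translation is well-defined and jointly continuous as a flow on $X^f$ (the algebraic flow identities being automatic wherever the translation makes sense), and second, that the Bowen-Walters metric $d_f$ metrizes the quotient topology. The conceptual point behind both is the same: hypothesis (\ref{hypothesis}) prevents orbits from ``escaping to the singularity in finite time''. Concretely, for $x \neq *$ the forward trajectory $t \mapsto T^f_t(x,0)$ passes through the successive floors $\{(T^k x, s) : 0 \le s \le f(T^k x)\}$, and the total time to traverse the first $K$ floors is $\sum_{k=0}^{K-1} f(T^k x)$; hypothesis (\ref{hypothesis}) says this grows to infinity uniformly on compact subsets of $X \setminus \{*\}$, so every orbit is defined for all $t \in \mathbb R$ and $*$ is the unique fixed point. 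I would first record this: define $T^f_t$ on representatives by iterating the ``raise the second coordinate, reduce by the roof when you hit it'' rule, check it descends to $X^f$, and note $T^f_0 = \mathrm{Id}$, $T^f_{t+s} = T^f_t \circ T^f_s$ hold by construction.

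The substantive work is continuity, and here the natural strategy is to prove directly that $d_f$ is a metric inducing the quotient topology, and that $T^f$ is continuous for $d_f$. That $d_f$ is symmetric and satisfies the triangle inequality is immediate from the definition via admissible chains (concatenation of chains); the only nontrivial metric axiom is that $d_f(A,B) = 0$ implies $A = B$, which away from $*$ is exactly Bowen--Walters' original argument \cite{bowen1972expansive}, and near $*$ needs a short additional remark using the convention $u_{(*,0)} = 0$ together with hypothesis (\ref{hypothesis}) to see that a point at ``height zero over a point near $*$'' cannot be $d_f$-close to a point far from the singular fiber. For the topology, I would check the two inclusions: (a) the quotient map $q : \overline{X^f} \to (X^f, d_f)$ is continuous, by exhibiting for nearby $(x,t), (y,s) \in \overline{X^f}$ an explicit short admissible chain between their classes (a horizontal move of length $\lesssim d(x,y)$ composed with vertical moves of total length controlled by $|t - s|$ and $|f(x) - f(y)|$, all small by uniform continuity of $f$ — here one uses $f(*) = 0$ and continuity of $f$ to handle points with $x$ near $*$); and (b) $q$ is a closed map, which follows since $\overline{X^f}$ is compact once we know $q$ is continuous and $d_f$ is Hausdorff, giving that $q$ is a quotient map onto $(X^f, d_f)$, hence $d_f$ induces the quotient topology.

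For joint continuity of $(t, [x,s]) \mapsto T^f_t([x,s])$, I would split into two regimes. Away from the singular fiber, i.e. on $\{[x,s] : d(x, *) \ge \delta\}$ for fixed $\delta > 0$, hypothesis (\ref{hypothesis}) provides a uniform lower bound: there is $\epsilon(\delta) > 0$ such that $f(T^k x) \ge$ (something) cannot be directly asserted, but the return-time function to any fixed floor is locally bounded and bounded below, so $T^f$ restricted there is continuous by the classical suspension argument. Near the singular fiber, the key estimate is that for $x$ close to $*$, the orbit segment $\{T^f_\tau([x,0]) : 0 \le \tau \le C\}$ stays in a small $d_f$-neighborhood of $[*,0]$: indeed if $d(T^k x, *)$ stays small for all $k$ up to the floor reached at time $C$ then all those points are $d_f$-close to $[*,0]$ (each has a representative $(T^k x, s)$ with $u$-coordinate in $[0,1]$ and $d(T^k x, *)$ small, and the vertical distance from $[*,0]$ is small because... one must be careful here), while if some $T^k x$ escapes the small neighborhood then by (\ref{hypothesis}) a definite amount of time was spent before that, contradicting that we only ran for time $C$. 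Making this dichotomy quantitative, using uniform continuity of the finitely many maps $T, T^2, \dots, T^K$ and of $f$, yields continuity of $T^f$ at points of the singular fiber.

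The main obstacle I anticipate is precisely the continuity of the flow (and of $d_f$ itself) at the singular point $[*,0]$: one must show that a small change in the base point near $*$ does not produce a large jump in $d_f$, and this is where hypothesis (\ref{hypothesis}) is genuinely needed — without it, a point could have a tiny roof value yet its image under $T$ could be far from $*$, letting a short orbit segment travel far and breaking continuity. The technical heart is therefore a uniform estimate of the form: for every neighborhood $U \ni *$ and every $C > 0$ there is a neighborhood $V \ni *$ such that for all $x \in V$ the orbit segment of $[x,0]$ of length $C$ remains inside $q(\overline{X^f} \cap (U \times \mathbb R_{\ge 0}))$, which one obtains by contraposition from (\ref{hypothesis}) applied with $M = C$. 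Everything else is a routine adaptation of Bowen--Walters \cite{bowen1972expansive}, so I would state those parts briefly and concentrate the write-up on the singular-fiber estimates.
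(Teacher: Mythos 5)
Your overall strategy is the one the paper follows: hypothesis \eqref{hypothesis} is used to bound the number of fiber crossings an orbit can make in bounded time before leaving a neighbourhood of $*$, and this is what gives both the continuity of the time-$s$ maps at $*^f$ and the separation properties of $d_f$. Your escape-time dichotomy for the orbit segment of length $C$ is precisely the contrapositive of the paper's argument by contradiction (the paper traces $T^f_s(A)\notin V$ backwards: the base point of $T^f_s(A)$ lies outside $U$, hence by \eqref{hypothesis} at most $K$ backward iterates separate it from the base point of $A$, which therefore lies outside the neighbourhood $\bigcap_{k=0}^K T^{-k}U$ of $*$), and that part is sound. One organisational difference worth adopting from the paper: it proves continuity at $*^f$ directly in the quotient topology, using that every neighbourhood $V$ of $*^f$ contains a set of the form $\{(x,t)\in X^f:\ x\in U\}$ with $U\ni *$ open (this is where $f(*)=0$ enters). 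Your version routes the same estimate through $d_f$-balls around $*^f$, which makes the continuity of the flow contingent on the metric compatibility --- the most delicate part of the lemma --- when it need not be.

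The genuine gap is exactly the step you flagged (``the vertical distance from $[*,0]$ is small because\dots one must be careful here''), and it is not a minor detail: with the vertical lengths of Section 2.1, measured in the normalised coordinate $u_{(x,t)}=t/f(x)$, the needed estimate is false. Horizontal pairs preserve $u$, and each vertical pair displaces $u$, viewed on the circle $\mathbb R/\mathbb Z$, by at most its length; hence every admissible chain from a point of height $u=1/2$ to $*^f$ (which has $u=0$ by the stated convention) has total length at least $1/2$. So $d_f\bigl((y,f(y)/2),\,*^f\bigr)\geq 1/2$ for every $y\neq *$, while $(y,f(y)/2)\to *^f$ in the quotient topology as $y\to *$ because $f(y)\to 0$. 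Consequently the continuity of the quotient map into $(X^f,d_f)$ --- the hinge of your ``continuous closed surjection from a compact space'' scheme --- cannot be extracted from the normalised metric; your own chain estimate (``vertical moves of total length controlled by $|t-s|$'') tacitly measures vertical pairs by actual time differences $|t_A-t_B|$, $f(x_A)-t_A+t_B$, etc. You must make that choice explicit and work with the unnormalised vertical lengths near the singularity: with that convention your chain does give continuity of $q$, and the converse direction (small $d_f$ implies close in the quotient topology) is then exactly where the bounded-$k$ consequence of \eqref{hypothesis} enters, as in the paper's own (equally terse) treatment of this point.
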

The flow $(X^f,T^f)$ is called a  {\it singular suspension flow}. When $(X,T)$ is a subshift we speak of {\it singular symbolic flow}.
In the following we write $*^f=(*,0)\in X^f$ the singularity of the flow.

\begin{proof}
The properties $\sum_{k\in\mathbb N} f(T^kx)=+\infty$ and  $\sum_{k\in\mathbb N} f(T^{-k}x)=+\infty$   clearly ensures the  orbit of the induced flow $T^f$ are defined on $\mathbb R$ at any $(x,t)\in X^f$ with $x\neq *$. Moreover $T^f$ is continuous on this open set $X^f\setminus \{*^f\}$. We let $T^f_t (*^f)=*^f$ for all $t\in \mathbb R$.  Let us show that for a fixed $s\in \mathbb R$, the time-$s$ map $T^f_s$ is continuous  at $*^f$.  Without loss of generality we may assume $s>0$. We argue then  by contradiction : assume there is  a neighborhood $V$ of $*^f$ in $
X^f$ such that $T^f_s(A)$ does not lie in $V$ for $A\in X^f$  arbitrarily close to $*^f$. 
For any $x\in X\setminus\{*\}$, we let $k(x)$ be the largest integer $k$  with $\sum_{l=0}^kf(T^{-l}x)< s$.
Let $U$ be an open neighborhood of $*$ with $\{ (x,t)\in X^f, \, x\in U \}\subset V $ and we 
write  $T^f_s(A)=(x^s_A,t^s_A)$. Then $x_A^s\in X\setminus U$ for $A$ arbitrarily close to $*^f$. It follows from (\ref{hypothesis}) that  $k(x^s_A)$ is bounded  
for $A$ arbitrarily close to $*^f$ by some $K>0$. Then $T^{-k(x^s_A)}x^s_A  $  does not belong to the open neighborhood $\bigcap_{k=0}^K T^{-k}U$ of $*$. But  $A=(T^{-
k(x^s_A)}x^s_A, t^s_A+s-\sum_{l=0}^{k(x^s_A)}f(T^{-l}x^s_A))$ is close to $*^f=(*,0)$. This is a contradiction.

To prove  the Bowen-Walters metric is compatible with the topology on $X^f$, in comparison with the regular case, one only needs to check that for two points $A=(x_A,t_A)$ and $B=(x_B,t_B)$ lying on the same orbit of the flow (in particular $x_B=T^k(x_A)$ for some integer $k$)   with $d_f(A,B)$ small, the minimum  $d(T^{\epsilon}x_A,x_B)$ for $\epsilon\in\{0,-1,1\}$ is also small. Indeed if $A$ is  close to $*^f$ it could  happen that the length of a  sequence $A=A_1,\cdots, A_n=B$ with $A_iA_{i+1}$ vertical for $i=1,\cdots, n-1$ is small, but $k$ so large that $x_B= T^k(x_A)$ and $x_A$ are far from each other, in particular $x_B$ would lie far form the fixed point $*$. But   (\ref{hypothesis}) prevent this case as  it  implies  $k$ is bounded.
\end{proof}

  We  let $\MM^*_T(X)$ (resp. $\MM^*_{T^f}(X^f)$) denote the convex subset (not closed) of $\MM_T(X)$ (resp. $\MM_{T^f}(X^f)$) given by measures distinct from $\delta_*$  (resp. with $*^f$ zero measure). We also let $\mathcal M^{\dagger}_{T}(X)=\MM_{T}(X)\setminus \MM^*_{T}(X)$ (resp. $\mathcal M^{\dagger}_{T^f}(X^f)=\MM_{T^f}(X^f)\setminus \MM^*_{T^f}(X^f)$). Notice that if $\mu \in \mathcal M^{\dagger}_{T^f}(X^f)$ then $\mu(*^f)>0$. Then the map
\begin{equation*}
\begin{split}
\Theta: \MM^*_T(X) &\to \MM^*_{T^f}(X^f)\\
\mu &\mapsto \frac{(\mu\times \LL)_{|X^f}}{\int f \,d\mu}.
\end{split}
\end{equation*} 
is a homeomorphism (not affine in general). Sometimes, we write $\nu_\mu=\Theta(\mu)$ for $\mu\in \MM^*_T(X)$. Moreover $\Theta^{-1}$ extends continuously to $\MM_{T^f}(X^f)$ in such a way that $\Theta^{-1}(\xi)=\delta_*$ for any $\xi\in \mathcal M^{\dagger}_{T^f}(X^f)$.
%\footnote{\textcolor{red}{Actually, it is straightforward to check that:  If $\nu_n\to \xi \in \mathcal M^{\dagger}_{T^f}(X^f)$, then $\Theta^{-1}(\nu_n)\to \delta_*$ as $n\to \infty$; If $\mu_n\to \delta_*$, then $\Theta(\mu_n)\to \delta_{*^f}$ as $n\to \infty$; If $\mu_n\to \zeta \in \mathcal M^{\dagger}_{T}(X)$ with $\zeta\not=\zeta(*)\delta_*$, then $\Theta(\mu_n)\to \Theta(\zeta-\zeta(*)\delta_*)$ as $n\to \infty$.}}.

It follows from Abramov formula and the variational principle for the entropy that 

$$h_{top}(T^f)=\sup_{\mu\in  \MM^*_T(X)}h(\Theta(\mu))=\sup_{\mu\in  \MM^*_T(X)}\frac{h(\mu)}{\int f\, d\mu}.$$

\subsection{Entropy structure of zero dimensional singular suspension flow}
  For a partition $P$  of $X$ we let $A^*=A^*_P$ be the atom of $P$ containing $*$, then  $X^*=X\setminus A^*$ and $P^*$ the partition of $X^*$ induced by $P$. Fix also $\delta=\delta(P)=1/p$ with $p\in \mathbb N^*$ such that $f(x)>\frac{1}{p}$ for all $x\in X^*$.

\begin{lem}\label{ab}
 For all $ \mu\in \mathcal{M}_T(X)$ with $\mu(X^*)\neq 0$  \begin{eqnarray}\label{dcd} \frac{h(\nu_\mu,\phi_{\delta},P_{\delta})}{\delta}\geq \mu(X^*)\frac{h(\mu_{*},T_{*},P^*)}{\int f \, d\mu},
 \end{eqnarray}
 where $P_\delta$ is the partition of $X^f$ given by $P_\delta:=\{ X^f\setminus (X^* \times [0,\delta[), \ B\times [0,\delta[ \ :  \ B\in P^*\}$,  $T_{*}$ is the first return map  in $X^*$ w.r.t. $T$ and $\mu_{*}\in \mathcal M_{T^*}(X^*)$ the measure induced by $\mu$ on $X^*$. 
\end{lem}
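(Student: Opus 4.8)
The plan is to relate the entropy of the suspension flow at scale $\delta$ to the entropy of the first-return system $(X^*, T_*, P^*)$ via an Abramov-type argument adapted to the singular situation. First I would recall that since $\mu(X^*) \neq 0$, the induced measure $\mu_*$ on $X^*$ (Kac's construction) is $T_*$-invariant, and Kac's formula gives $\int_{X^*} r \, d\mu_* = \frac{1}{\mu(X^*)}$ where $r$ is the first-return time of $T$ to $X^*$. The key point is that the partition $P_\delta$ of $X^f$ was chosen precisely so that the return map of the time-$\delta$ flow $\phi_\delta$ to the "base slab" $X^* \times [0,\delta[$ encodes the dynamics of $T_*$: a point $(x,0)$ with $x \in X^*$ flows up through the suspension, spends time $\int$-worth of $f$ over the $A^*$-excursion, and returns to $X^* \times [0,\delta[$ after roughly $\lceil S_r f(x)/\delta \rceil$ steps of $\phi_\delta$, landing near $(T_* x, \cdot)$. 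So the strategy is: (1) identify $\phi_\delta$-names with respect to $P_\delta$ along an orbit piece with a coding that refines the $T_*$-name with respect to $P^*$ of the corresponding base point; (2) count $(n, \epsilon)$-separated sets / use the partition entropy definition to transfer the count.

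Concretely, I would argue at the level of partition entropy. Let $\nu = \nu_\mu = \Theta(\mu)$. Consider the first-return map $R$ of the flow restricted to the base $B := X^* \times \{0\} \cong X^*$; since the roof is bounded below by $\delta$ on $X^*$, this return map is well-defined $\nu$-a.e. on the base, and it is conjugate to $(X^*, T_*, \mu_*)$. The induced measure of $\nu$ on the base, suitably normalized, is $\mu_*$, and the return time of $R$ measured in units of $\delta$ is comparable to $S_r f / \delta$. Using the Abramov formula relating $h(\phi_\delta, \nu)$ to $\delta^{-1}$ times the entropy of the base return system weighted by the mean return time — or more directly, using the inequality that partition entropy only decreases under passing to a sub-$\sigma$-algebra and the factor structure — I would get
\[
\frac{h(\nu, \phi_\delta, P_\delta)}{\delta} \;\geq\; \frac{h(\mu_*, T_*, P^*)}{\delta \cdot \overline{N}},
\]
where $\overline{N}$ is the mean number of $\phi_\delta$-steps per return to the base, i.e. $\overline{N} = \delta^{-1}\int_{X^*} S_r f \, d\mu_* = \delta^{-1}\int_X f\, d\mu \cdot \mu(X^*) \cdot \frac{1}{\mu(X^*)}$... more carefully, $\int_{X^*} S_r f\, d\mu_* = \frac{\int_X f \, d\mu}{\mu(X^*)}$ by Kac, so $\delta\,\overline{N} = \frac{\int_X f\,d\mu}{\mu(X^*)}$, which yields exactly the claimed bound $\frac{h(\nu_\mu,\phi_\delta,P_\delta)}{\delta} \geq \mu(X^*)\frac{h(\mu_*,T_*,P^*)}{\int f\,d\mu}$.

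The main obstacle I anticipate is making the "$\phi_\delta$-name refines the $T_*$-name" step rigorous in the presence of the fractional part: the number of $\phi_\delta$-steps between successive base returns is $\lceil S_r f(x)/\delta\rceil$ or its floor, not an integer multiple, so the alignment between the flow's discrete-time orbit and the base return times drifts, and one must check that reading off which base slab atom $B \times [0,\delta[$ (for $B \in P^*$) the orbit lands in at return times genuinely recovers the $T_*$-$P^*$-name without loss. I would handle this by a careful bookkeeping of the "phase" $s_l \in [0,\delta[$ along the $\phi_\delta$-orbit (as in the phase variables $u_l$ in the orbit-equivalence proof above), showing that at each base-return step the $P_\delta$-coordinate determines the atom of $P^*$ containing the current base point; summing over the first $n$ returns and comparing with the ergodic-theoretic mean return time (via the Birkhoff/Kac argument and, if needed, the Shannon–McMillan–Breiman theorem to pass from per-return counts to the entropy rate per $\delta$-step) gives the inequality. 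A secondary technical point is that $P_\delta$ is not a partition into small-diameter sets and $h(\nu,\phi_\delta,P_\delta)$ should be read as the measure-theoretic partition entropy of the time-$\delta$ map, which is exactly what is needed downstream; no continuity or refinement beyond this is required here.
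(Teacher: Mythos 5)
Your plan is correct and is essentially the paper's proof: you induce $\phi_\delta$ on the slab $X^*\times[0,\delta[$, observe that the return map projects to $T_*$ on the base coordinate so that the iterated induced partition of $P^*_\delta$ is exactly $\bigl(\bigvee_k T_*^{-k}P^*\bigr)\times[0,\delta[$, and you use the Abramov/Kac normalization $\nu_\mu(X^*\times[0,\delta[)=\delta\mu(X^*)/\int f\,d\mu$ together with discarding extra information to get the inequality. The paper packages the Abramov step as Lemma 3.7 of \cite{burguet2019symbolic} (an identity for the induced map with the return-time partition $R_\delta$ joined to $P^*_\delta$, whence the $\geq$ after dropping $R_\delta$), and the phase-drift issue you flag is harmless because, the roof being $>\delta$ on $X^*$, every entry into a fiber occurs at height below $\delta$, so the base point of the $k$-th return is exactly $T_*^kx$ independently of the phase.
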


We follow the lines of the proof of Lemma 3.8 in \cite{burguet2019symbolic}.
\begin{proof}

Let $X^*_\delta$ be the subset of $X^f$ given by $X^*_\delta= X^* \times [0,\delta[/\sim$. For the partition $P^*$ of $X^*$ we  denote by  $P^*_{\delta}=\{B\times [0,\delta[, \ B\in P^*\}$ the partition induced by $P$ on $X^*_\delta$.  We also let $R_\delta$ be the partition of $X^*_\delta$ with respect to the first return time in $X^*_\delta$ w.r.t. $\phi_\delta$ and we let  $\phi_\delta^*$ the corresponding first return map.  By applying Lemma 3.7 in  \cite{burguet2019symbolic}   to $\nu_\mu=\Theta(\mu)$, $\phi_{\delta}$  and $P^*_\delta$
we get \begin{eqnarray}\label{t}
h\left(\nu_\mu^\delta,\phi^*_{\delta},P^*_\delta\vee R_{\delta}\right)&=&\frac{h\left(\nu_\mu,\phi_{\delta},P_{\delta}\right)}{\nu_\mu(B_\delta)},\nonumber\\ 
&=&\frac{\int f\, d\mu}{\delta\mu(A_\delta)}h\left(\nu_\mu,\phi_{\delta},P_{\delta}\right).
\end{eqnarray}
But  the partition $\bigvee_{k=0}^{n-1}(\phi^*_\delta)^{-k}P^*_{\delta}$ of $X^*_\delta$ is just the partition 
$\bigvee_{k=0}^{n-1}T_{*}^{-k}P^*\times [0,\delta[$. Therefore we get, with $H_\mu(Q)=\sum_{C\in Q}-\mu(C)\log \mu(C)$ :
\begin{align*}
h\left(\nu_\mu^\delta,\phi^*_{\delta},P^*_\delta\right)&=\lim_n\frac{1}{n}H_{\nu_\mu^\delta}\left(\bigvee_{k=0}^{n-1}T_{*}^{-k}P^*\times [0,\delta[\right),\\
&= \lim_n\frac{1}{n}\sum_{C\in \bigvee_{k=0}^{n-1}T_{*}^{-k}P^*\times [0,\delta[ }-(\nu_\mu^\delta(C)\log (\nu_\mu^\delta(C),
\end{align*}
where $ \nu_\mu^\delta$ denotes the probability measure induced by $\nu_\mu$ on $X^*_{\delta}$. 
For any $B\in \bigvee_{k=0}^{n-1}T_{*}^{-k}P^*$ and $C=B\times [0,\delta[$ we have
$ \nu_\mu^\delta(C)=\frac{\nu_\mu(C)}{\nu_\mu(X^*_\delta)}=\mu(B)$.
Therefore we obtain finally :
\begin{align*}
h\left(\nu_\mu^\delta,\phi_{\delta}^*,P^*_\delta\right)&=h(\mu_*,T_{*},P^*),
\end{align*}
which together (\ref{t}) implies the required inequality (\ref{dcd}). 

\end{proof}

A subset of $X$ is said to have a {\it small boundary}  when its  boundary  has zero $\mu$-measure for any $\mu\in  \mathcal M_T(X)$.
The partition $P$ has a  small boundary, when its atoms have small boundary. We assume now $(X,T)$ has the {\it small boundary property}, i.e. $X$ has a topological base in which every element has a small boundary.  Therefore there  exists  a sequence $(P_k)_k$  of  small boundary partitions of $X$ with $\diam(P_k)\xrightarrow{k}0$. Then the sequence of entropy functions with respect to $(P_k)_k$ is an entropy structure (see \cite{Dz2005}). Let  $(\delta_k)_k=(1/p_k)_k$ be  the associated sequence of parameters $(\delta(P_k))_k$. Then $\frac{h(\nu_\mu,\phi_{\delta},(P_k)_\delta)}{\delta}=h(\nu_\mu, \phi_1, Q_k)$ with $Q_k:=\bigvee_{l=0}^{p_k-1}\phi_{\delta}^{-l}(P_k)_{\delta_k}$. The partitions $Q=Q_k$  have  also  small   boundary (w.r.t. the suspension flow) and for any  $\mu\in \mathcal M^*_T(X)$   we get  with $H(t)=-t\log t-(1-t)\log (1-t)$ for all $t\in [0,1]$:

\begin{align}\label{yield}
h(\nu_\mu, \phi_1,Q) &\geq \mu(X^*)\frac{h(\mu_{*},T_*,P^*)}{\int  f\,   d\mu}, \nonumber \\
&\geq \frac{h(\mu, T, P)-h(\mu,T, \{X^*,  A_*\} )}{\int  f\,  d\mu}, \nonumber \\
&\geq  \frac{h(\mu, T, P)-H(\mu(A_*))}{\int f \, d\mu},\\
&\xrightarrow{Q=Q_k, \ k\rightarrow +\infty} \frac{h(\mu,T)}{\int f\,   d\mu}=h(\nu_\mu, \phi_1). \nonumber 
\end{align}
From Corollary 3.2 in \cite{burguet2019symbolic} it follows that  $(h(\cdot, Q_k))_k$ defines an entropy structure of the flow $(X^f, T^f)$. 
Following \cite{Dz2005} we say for two sequences $\mathcal{G}=(g_k)_k$ and $\mathcal H=(h_k)_k$ of real functions defined on the same metric space $\mathfrak X$ that   $\mathcal H$  {\it yields} $\mathcal{G}$, written $\mathcal H\succ \mathcal G$, when  for all $\epsilon>0$ and  $\mu\in \mathfrak X$  there exists a neighborhood $U$ of $\mu$ such that $$\limsup\limits_{k\to \infty}\limsup\limits_{j\to \infty} \sup_{\mu \in U} (g_j-h_k)(\mu)\le \epsilon.$$
When $\mathfrak X$ is compact this is equivalent to  $$\limsup\limits_{k\to \infty}\limsup\limits_{j\to \infty} \sup_{\mu \in \mathfrak{X}} (g_j-h_k)(\mu)\le 0.$$  Two entropy structures $\mathcal H$ and $\mathcal G$ of a topological system or flow satisfy $\mathcal H\succ \mathcal G$ and $\mathcal G\succ \mathcal H$ (see \cite{Dz2005, burguet2019symbolic}).

 \begin{lem}\label{partial}Let $\mathcal H_T=(h_k)_k$ be an entropy structure of $T$ and $\mathcal{G}=(g_k)_k$ be the sequence of real functions $g_k$ defined on $\mathcal M_{T^f}^*(X^f)$ by $g_k=h_k\circ \Theta^{-1}$. Then  
 the restriction to $\mathcal M_{T^f}^*(X^f)$ of any entropy structure of $(X^f, T^f)$ yields $\mathcal G$.
\end{lem}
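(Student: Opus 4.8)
The goal is to compare an arbitrary entropy structure of the flow $(X^f,T^f)$, restricted to $\mathcal M^*_{T^f}(X^f)$, with the sequence $\mathcal G=(h_k\circ\Theta^{-1})_k$ coming from an entropy structure of $T$. The plan is to use the concrete entropy structure $(h(\cdot,Q_k))_k$ of $(X^f,T^f)$ constructed just above via \eqref{yield}, since all entropy structures of a topological flow are equivalent in the order $\succ$; thus it suffices to show $(h(\cdot,Q_k))_k \succ \mathcal G$ on $\mathcal M^*_{T^f}(X^f)$. Concretely, one wants: for every $\epsilon>0$ and every $\xi\in\mathcal M^*_{T^f}(X^f)$, a neighborhood $U$ of $\xi$ in $\mathcal M^*_{T^f}(X^f)$ with
\[
\limsup_k\limsup_j \sup_{\nu\in U}\bigl(g_j(\nu)-h(\nu,Q_k)\bigr)\le\epsilon.
\]

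\textbf{Key steps.} First, I would fix $\xi=\nu_\mu=\Theta(\mu)$ with $\mu\in\mathcal M^*_T(X)$, and translate the desired bound through the homeomorphism $\Theta$: writing $\nu=\nu_{\mu'}$ for $\mu'\in\mathcal M^*_T(X)$ near $\mu$, one needs $h_j(\mu') - h(\nu_{\mu'},Q_k)$ to be eventually small for $\mu'$ in a suitable neighborhood of $\mu$. Second, I would invoke the lower bound \eqref{yield} established above, namely
\[
h(\nu_{\mu'},\phi_1,Q_k)\ge \frac{h(\mu',T,P_k)-H(\mu'(A_{*,k}))}{\int f\,d\mu'},
\]
where $A_{*,k}$ is the atom of $P_k$ containing $*$; combined with the fact that $h(\cdot,T,P_k)$ dominates (up to a vanishing term) the entropy structure function $h_k$ of $T$. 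Third, I would control the two correction factors: the denominator $\int f\,d\mu'$ is continuous and bounded away from $0$ near $\mu$ since $\mu\in\mathcal M^*_T(X)$ forces $\int f\,d\mu>0$ (this is precisely where one needs $\nu\in\mathcal M^*_{T^f}$ rather than on all of $\mathcal M_{T^f}$); and the term $H(\mu'(A_{*,k}))$ must be made small — here the small boundary of $P_k$ gives $\mu(\partial A_{*,k})=0$, so $\mu'(A_{*,k})\to\mu(A_{*,k})$ along a neighborhood, and then $k\to\infty$ sends $\mu(A_{*,k})$ to the mass of the single point-orbit content near $*$, where one uses $\mu\ne\delta_*$. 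Finally, passing to the limit $k\to\infty$ and then choosing the neighborhood small enough, assemble the estimate and push it back through $\Theta$; since all entropy structures of $(X^f,T^f)$ are $\succ$-equivalent, this transfers from $(h(\cdot,Q_k))_k$ to an arbitrary one.

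\textbf{Main obstacle.} The delicate point is the uniformity of the estimate in $\mu'$ as both $k\to\infty$ and $\mu'\to\mu$: the term $H(\mu'(A_{*,k}))$ is small for $\mu'$ near $\mu$ once $k$ is large, but the order of quantifiers in the definition of $\succ$ (first $\limsup_k$, then $\limsup_j$, then sup over a fixed neighborhood) must be respected, so one should choose the neighborhood $U$ after fixing how the $k$-limit behaves, exploiting that $\mu\mapsto H(\mu(A_{*,k}))$ is upper semicontinuous at $\mu$ when $\partial A_{*,k}$ is $\mu$-null. A secondary technical nuisance is that $\Theta$ is a homeomorphism but \emph{not} affine, so one cannot simply transport harmonicity or affine superenvelope arguments through it; the proof must stay at the level of the $\succ$ relation, which only requires pointwise control plus a neighborhood, and is insensitive to non-affineness. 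I expect that once the denominator is handled by the $\mathcal M^*$ restriction and the $A_{*,k}$-term by small boundaries, the rest is the same bookkeeping as in Lemma 3.8 and Corollary 3.2 of \cite{burguet2019symbolic}.
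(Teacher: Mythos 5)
Your plan coincides with the paper's own proof: both reduce, via the $\succ$-equivalence of entropy structures, to comparing the concrete structures $(h(\cdot,T,P_k))_k$ and $(h(\cdot,\phi_1,Q_k))_k$, then apply the inequality \eqref{yield}, using that $\int f\,d\mu'$ stays bounded away from $0$ on a neighborhood inside $\mathcal M^*_T(X)$ and that the correction term $H(\xi(A^*_{P_j}))/\int f\,d\xi$ can be made at most $\epsilon$ for $j$ large and $\xi$ in a suitable neighborhood, before transporting the estimate through $\Theta$. The bookkeeping with the order of the limits that you flag as the main obstacle is handled in the paper exactly as you suggest, so your argument is essentially identical to the one given there.
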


\begin{proof}
Without loss of generality we may assume $\mathcal H$ is the sequence $\left(h(\cdot, P_k)\right)_k$ with $(P_k)_k$ as above and consider the associated  entropy structure $h(\cdot, \phi_1,Q_k)$ of the suspension flow.  Fix $\nu=\Theta(\mu)\in \mathcal M_{T^f}^*(X^f)$. 
For  $j$ large enough,  $\mu(A^*_{P_j} )$ is so small that $\frac{H(\xi(A^*_{P_j} ) )}{\int f \, d\xi}\leq \epsilon$ for all $\xi$  in a neighborhood $U$ of $\mu$ in $\mathcal M_{T}^*(X)$. Then it follows from (\ref{yield}) with $\nu_\mu=\Theta(\mu)$ that 

\begin{align*}
\limsup\limits_{k\to \infty}\limsup\limits_{j\to \infty}  \sup_{\nu_\xi\in \Theta(U)}\left( h(\nu_\xi, \phi_1,Q_j)-   \frac{h(\xi, T, P_k)}{\int f \, d\xi}\right) \geq & \limsup\limits_{j\to \infty} \sup_{\xi \in U} \left( h(\nu_\xi, \phi_1,Q_j)-   \frac{h(\xi, T, P_j)}{\int f \, d\xi}\right),\\
& \geq  -\limsup_j \sup_{\xi\in U}\frac{H(\xi(A_*))}{\int f \, d\xi}\geq  -\epsilon.
\end{align*}
\end{proof}

\subsection{Singular suspension flows with small entropy at the singularity}

Under some criterion on the entropy function at the singularity, we manage to  build a symbolic extension of the suspension flow $(X^f, T^f)$ from a symbolic extension of $(X,T)$. 

%For a topological extension $\pi: (Y, S)\to (X,T) $ and a function $g: \MM_S(Y)\to \R$, we define
%\begin{align*}
%g^{\pi}: \MM_T(X)&\to \R,\\
%\mu &\mapsto \sup_{\nu: \pi \nu=\mu} g(\nu).
%\end{align*}

\begin{prop}\label{first}
Let $(X^f,T^f)$ be a singular suspension flow. Assume the associated discrete dynamical system $(X,T)$ admits a symbolic extension $\pi$ with  $\lim_{ \mu\rightarrow \delta_*} \frac{h^\pi (\mu)}{\int f\, d\mu}=0$. 

Then the suspension flow  admits a  symbolic extension $\pi'$ with $h^{\pi'}\leq g^{har}$ 
where $g^{har}$ denotes the harmonic extension of the function  $g: \mathcal{M}_{T^f}(X^f)\rightarrow \mathbb{R}_{\ge 0}$ defined by $g( \nu)= \frac{h^\pi(\mu)}{\int f \, d\mu}$ for $ \nu =\Theta(\mu)\in \mathcal{M}_{T^f}^*(X^f)$ and $g(\nu)=0$ for others $\nu$.
\end{prop}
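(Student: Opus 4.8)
The goal is to produce a symbolic extension $\pi'$ of the suspension flow $(X^f,T^f)$ whose entropy function is dominated by $g^{har}$. The strategy is to transfer the symbolic extension $\pi:(Y,S)\to(X,T)$ on the base to a symbolic extension of the flow, and then invoke the characterization of symbolic extensions in terms of superenvelopes (Theorem \ref{main thm1 symbolic extension}). Concretely, I would first argue that it is enough to exhibit an affine superenvelope $E$ of an entropy structure of $(X^f,T^f)$ with $E\le g^{har}$; Theorem \ref{main thm1 symbolic extension} then yields a symbolic extension $\pi'$ with $h^{\pi'}=E\le g^{har}$. So the whole problem reduces to showing that $g^{har}$ itself (or some affine function below it) is a superenvelope.

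\textbf{Step 1: identify the relevant entropy structure.} By the small boundary property discussion preceding Lemma \ref{partial} I may assume that $\bigl(h(\cdot,\phi_1,Q_k)\bigr)_k$ is an entropy structure of $(X^f,T^f)$. By Lemma \ref{partial}, the restriction of any entropy structure to $\mathcal M^*_{T^f}(X^f)$ yields $\mathcal G=(g_k)_k$ with $g_k=h_k\circ\Theta^{-1}$, where $\mathcal H_T=(h_k)_k$ is an entropy structure of $(X,T)$. Since $\pi$ is a symbolic extension of $(X,T)$, by Theorem \ref{thm:dicrete superenvelop} the function $h^\pi$ is an affine superenvelope for $(X,T)$, i.e. $\lim_k(h^\pi-h_k)^{\tilde{}}=h^\pi-h_T$ on $\mathcal M_T(X)$.

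\textbf{Step 2: pull the superenvelope through $\Theta$ and control the singularity.} Define $g$ on $\mathcal M^*_{T^f}(X^f)$ by $g(\nu)=h^\pi(\mu)/\!\int f\,d\mu$ and $g=0$ elsewhere. The key computation is to show that $g$ (equivalently $g^{har}$) is a superenvelope of the entropy structure of the flow: one needs $\lim_k\bigl(g^{har}-h(\cdot,\phi_1,Q_k)\bigr)^{\tilde{}}=g^{har}-h_\Phi$. Away from the singularity, since $\Theta$ is a homeomorphism and $\int f\,d\mu$ is continuous and bounded below on any set of measures bounded away from $\delta_*$, the superenvelope property of $h^\pi$ for $(X,T)$ transfers to $g$ for $(X^f,T^f)$ — using Lemma \ref{partial} to compare entropy structures, and the Abramov-type identity $h_\Phi(\nu_\mu)=h_T(\mu)/\!\int f\,d\mu$. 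Near $*^f$, the hypothesis $\lim_{\mu\to\delta_*}h^\pi(\mu)/\!\int f\,d\mu=0$ guarantees that $g$ extends upper-semicontinuously by $0$, matching $h_\Phi$ there (which also vanishes, since $h_\Phi(\nu)$ for $\nu$ near $\delta_{*^f}$ is small by a similar Abramov/variational estimate). Finally one passes to $g^{har}$: the harmonic (affine) extension of a nonnegative superenvelope candidate remains a superenvelope, because harmonicity does not increase the relevant $\limsup$ and $g^{har}\ge g\ge h_\Phi$ while $g^{har}-h_k$ has the same upper-semicontinuous envelope behaviour on the ergodic decomposition.

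\textbf{Main obstacle.} The delicate point is the behaviour at the singularity: one must show that the discontinuity of $\Theta$ and of $\mu\mapsto\int f\,d\mu$ at $\delta_*$ does not destroy the superenvelope inequality, and that the entropy structure of the flow genuinely ``sees'' the base entropy structure through $\Theta$ with the correct Abramov weight — this is exactly what Lemma \ref{ab} and Lemma \ref{partial} are set up to provide, but combining them with the requirement $E\le g^{har}$ (rather than just $E=g^{har}$) and checking affineness/harmonicity compatibility is where the real work lies. I would expect the proof to proceed by verifying the superenvelope condition for $g^{har}$ directly from Lemma \ref{partial} plus the hypothesis on $h^\pi/\!\int f\,d\mu$ near $\delta_*$, then quoting Theorem \ref{main thm1 symbolic extension} to extract $\pi'$.
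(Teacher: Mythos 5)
Your proposal follows essentially the same route as the paper: reduce to showing that $g^{har}$ is an affine superenvelope (via Lemma 8.2.14 of Downarowicz, Fact A2.20 for upper semicontinuity of the harmonic extension, and Lemma \ref{partial} to compare the flow's entropy structure with the pulled-back base structure away from the singularity, with the hypothesis $\lim_{\mu\to\delta_*}h^\pi(\mu)/\int f\,d\mu=0$ handling measures charging $*^f$), then invoke the superenvelope characterization to extract $\pi'$. One small caveat: your chain $g^{har}\ge g\ge h_\Phi$ is not quite right, since $g$ vanishes on $\mathcal M^{\dagger}_{T^f}(X^f)$ where $h_\Phi$ need not; the correct inequality $g^{har}\ge h_\Phi$ comes from comparing on ergodic measures and using harmonicity of $h_\Phi$, which is how the paper proceeds.
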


%\textcolor{red}{I think that the same proof passes under that condition $\lim_{ \mu\rightarrow \delta_*} \frac{h^\pi (\mu)}{\int f\, d\mu}=\inf_{\mu\in \mathcal{M}_{T}^*(X)} \frac{h^\pi (\mu)}{\int f\, d\mu}$.}

\begin{proof}The function $h^\pi$ is upper semi-continuous on $\mathcal M_T(X)$ and $\mu\mapsto \int f\ d\mu$ is continuous and positive on $\mathcal M^*_T(X)$. Therefore $G:\mu\mapsto \frac{h^\pi(\mu) }{ \int f\ d\mu}$ is upper semi-continuous on  $\mathcal M^*_T(X)$ and  thus so is  $g=G\circ \Theta^{-1}$  on $\mathcal{M}_{T^f}^*(X^f)$. When $\nu$ belongs to $\mathcal{M}_{T^f}^\dagger(X^f)$, we have also $\lim_{\xi\rightarrow \nu}g(\xi)\leq g(\nu)=0$. Indeed we can assume $\xi$ lies in $\mathcal{M}_{T^f}^*(X^f)$ 
because $g$ is zero on $\mathcal{M}_{T^f}^\dagger(X^f)$. Then 
\begin{align*}
\lim_{\xi\rightarrow \nu}g(\xi)&\leq \lim_{\xi\rightarrow \nu}G\circ \Theta^{-1}(\xi),\\
&\leq \lim_{\mu\rightarrow \delta_*}G(\mu)=0.
\end{align*}

 Arguing as in Lemma 3.13 in \cite{burguet2019symbolic} the function $g$ is also affine on  $\mathcal{M}_{T^f}^*(X^f)$. It is also affine on $\mathcal{M}_{T^f}^\dagger(X^f)$ as $g$ is identically zero on this set. Then if $\nu\in \mathcal{M}^{\dagger}_{T^f}(X^f)$ and $\xi \in \mathcal{M}_{T^f}^*(X^f)$, the measure $\lambda\nu+(1-\lambda)\xi $ belongs to $\mathcal{M}^{\dagger}_{T^f}(X^f)$ for all $\lambda\in ]0,1]$, therefore 
\begin{align*}
0=g(\lambda\nu+(1-\lambda)\xi)&\leq \lambda g(\nu)+(1-\lambda)g(\xi).
\end{align*}

In conclusion, the function $g$ is upper semi-continuous and convex. Its harmonic extension $g^{har}$ is also upper semi-continuous by Fact A2.20 in \cite{downarowicz2011entropy}. Observe also $g^{har}\geq h^{T^f}$. We show now $g^{har}$ is a (affine) superenvelope, which will conclude the proof of the proposition by Theorem \ref{thm:dicrete superenvelop}. By Lemma 8.2.14 in \cite{downarowicz2011entropy}, it is enough to check for some entropy structure $\mathcal H^{T^f}=(h^{T^f}_k)_k$ with harmonic functions $h_k^{T^f}$

$$\lim_k(g^{har}-h^{T^f}_k)^{\tilde{} e}=g^{har}-h^{T^f}$$
with  $f^{\tilde{} e}(\nu)=\limsup \limits_{\stackrel{\xi\rightarrow \nu}{\xi\in \mathcal{M}_{T^f}^e(X^f)}}f(\xi)$ for any real function defined on  $\mathcal{M}_{T^f}(X^f)$. 
When $\nu=\Theta(\mu)$ belongs to $\mathcal{M}_{T^f}^*(X^f)$ any ergodic measure $\xi$ going to $\nu$ also lies in $\mathcal{M}_{T^f}^*(X^f)$, so that by Lemma \ref{partial} for any entropy structure $\mathcal H^T=(h^{T}_k)_k$ we get 
\begin{align*}
\lim_k(g^{har}-h^{T^f}_k)^{\tilde{} e}(\nu)&\leq \lim_k\frac{(h^{\pi}-h_k^T)^{\tilde{} e}(\mu)}{\int f\, d\mu}, \\
&\leq \frac{h^{\pi}-h^T(\mu)}{\int f\, d\mu}= (g^{har}-h^{T^f})(\nu).
\end{align*}
Finally let  $\nu\notin \mathcal{M}_{T^f}^*(X^f)$ and $\xi_n$ be a sequence of ergodic measures going to $\nu$. Then we may write $\xi_n=\Theta(\mu_n)$ with $\mu_n\xrightarrow{n}\delta_*$ so that  
 \begin{align*}
 \lim_k(g^{har}-h^{T^f}_k)^{\tilde{} e}(\nu)&\leq (g^{har})^{\tilde{} e}(\nu), \\
 &\leq \limsup_{\mu\rightarrow \delta_*}\frac{h^\pi(\mu)}{\int f \, d\mu},\\
 &=0,\\
 &\leq (g^{har}-h^{T^f})(\nu).
 \end{align*}

\end{proof}

%\textcolor{red}{Suppose $\lim_{ \mu\rightarrow \delta_*} \frac{h^\pi (\mu)}{\int f\, d\mu}=\inf_{\mu\in \mathcal{M}_{T}^*(X)} \frac{h^\pi (\mu)}{\int f\, d\mu}$. Define the function  $g: \mathcal{M}_{T^f}(X^f)\rightarrow \mathbb{R^+}$ defined by $g( \nu)= \frac{h^\pi(\mu)}{\int f \, d\mu}$ for $ \nu =\Theta(\mu)\in \mathcal{M}_{T^f}^*(X^f)$ and $g(\nu)=\lim_{ \mu\rightarrow \delta_*} \frac{h^\pi (\mu)}{\int f\, d\mu}$ for others $\nu$. The function $g$ is upper semi-continuous and convex so that its harmonic extension $g^{har}$ is also upper semi-continuous by Fact A2.20 in \cite{downarowicz2011entropy}. Is $g^{har}$ a (affine) superenvelope? In fact, it is sufficient to check whether $\lim_k(g^{har}-h^{T^f}_k)^{\tilde{} e}(\nu)\leq (g^{har}-h^{T^f})(\nu)$ for $\nu\notin \mathcal{M}_{T^f}^*(X^f)$.
%}

%\textcolor{red}{Any example that $+\infty>\limsup_{ \mu\rightarrow \delta_*} \frac{h^\pi (\mu)}{\int f\, d\mu}>\inf_{\mu\in \mathcal{M}_{T}^*(X)} \frac{h^\pi (\mu)}{\int f\, d\mu}$? Any example of subshift $(X,T)$ that $+\infty>\limsup_{ \mu\rightarrow \delta_*} \frac{h (\mu)}{\int f\, d\mu}>\inf_{\mu\in \mathcal{M}_{T}^*(X)} \frac{h (\mu)}{\int f\, d\mu}$?}

\begin{cor}\label{singprin}
Assume $(X,T)$ admits a principal symbolic extension and $\lim_{ \mu\rightarrow \delta_*} \frac{h(\mu)}{\int f\, d\mu}=0$, then the suspension flow $(X^f, T^f)$ also admits a principal symbolic extension. In particular, if $(X,T)$ has topological entropy zero, then $(X^f, T^f)$ admits a  symbolic extension with zero topological entropy.
\end{cor}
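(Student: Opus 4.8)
The plan is to derive Corollary \ref{singprin} directly from Proposition \ref{first} by taking for $\pi$ a \emph{principal} symbolic extension of $(X,T)$. Then $h^\pi = h^T$ on $\mathcal M_T(X)$, so the hypothesis $\lim_{\mu\to\delta_*}\frac{h^\pi(\mu)}{\int f\,d\mu}=0$ is exactly the assumed condition $\lim_{\mu\to\delta_*}\frac{h(\mu)}{\int f\,d\mu}=0$. Hence Proposition \ref{first} applies and produces a symbolic extension $\pi'$ of $(X^f,T^f)$ with $h^{\pi'}\le g^{har}$, where $g(\nu)=\frac{h(\mu)}{\int f\,d\mu}=h(\nu_\mu,\phi_1)=h^{T^f}(\nu)$ for $\nu=\Theta(\mu)\in\mathcal M_{T^f}^*(X^f)$, using the Abramov formula \eqref{eq:entropy suspension flow}. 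On the complementary set $\mathcal M_{T^f}^\dagger(X^f)$ we have $g\equiv 0$, and there $h^{T^f}$ is also $0$ since such a measure is supported on (a set whose ergodic components include only) the fixed point $*^f$ which carries zero entropy; so $g = h^{T^f}$ on all of $\mathcal M_{T^f}(X^f)$.

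Next I would observe that since $g=h^{T^f}$ pointwise and $h^{T^f}$ is harmonic (being an entropy function of a flow), the smallest superenvelope dominating $g$ — namely its harmonic extension $g^{har}$ — equals $h^{T^f}$ itself. Actually one must be slightly careful: $g$ is upper semicontinuous and convex (this is shown inside the proof of Proposition \ref{first}) but the entropy function $h^{T^f}$ need not be u.s.c.; what matters is that $g$ agrees with $h^{T^f}$ as a function, and the harmonic extension operation applied to the affine function $h^{T^f}=g$ returns $h^{T^f}$. Thus $h^{\pi'}\le g^{har}=h^{T^f}$. Combined with the trivial lower bound $h^{\pi'}\ge h^{T^f}$ valid for any symbolic extension, we get $h^{\pi'}=h^{T^f}$, i.e. $\pi'$ is principal. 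This proves the first assertion.

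For the second assertion, suppose $h_{top}(X,T)=0$. Then $(X,T)$ is its own principal symbolic extension candidate only if it is already expansive, so instead I invoke the first part: a system of zero topological entropy has $h\equiv 0$ on $\mathcal M_T(X)$, hence trivially admits a principal symbolic extension (for instance by \cite{BDz2004}, or because any zero-entropy system has zero symbolic extension entropy), and the condition $\lim_{\mu\to\delta_*}\frac{h(\mu)}{\int f\,d\mu}=0$ holds vacuously since the numerator is identically $0$. Applying the first part gives a principal symbolic extension $\pi'$ of $(X^f,T^f)$ with $h^{\pi'}=h^{T^f}$. By the variational principle for flows and the Abramov formula, $h^{T^f}(\nu)=h(\mu,T)/\int f\,d\mu=0$ for $\nu=\Theta(\mu)$, and $h^{T^f}\equiv 0$ on the remaining measures as well; hence $h_{top}$ of the symbolic extension equals $\sup h^{\pi'}=0$.

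The only genuinely non-routine point is the identification $g^{har}=h^{T^f}$, i.e. checking that the harmonic extension of $g$ does not overshoot $h^{T^f}$; but this is immediate once one notes $g=h^{T^f}$ everywhere and that $h^{T^f}$, being the entropy function of a topological flow, is affine and equal to its own harmonic extension. Everything else is a direct substitution into Proposition \ref{first} together with the Abramov formula \eqref{eq:entropy suspension flow} and the variational principle. I would write the argument in four or five lines, flagging the zero-entropy case as a degenerate instance of the same mechanism rather than a separate proof.
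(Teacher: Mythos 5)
Your overall route is the intended one: the corollary is meant to follow by feeding a principal symbolic extension $\pi$ (so $h^\pi=h$) into Proposition \ref{first} and identifying $g^{har}$ with the entropy function $h^{T^f}$, after which $h^{T^f}\le h^{\pi'}\le g^{har}=h^{T^f}$ forces $\pi'$ to be principal. The zero-entropy case is also handled correctly.

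There is, however, one false intermediate claim. You assert that on $\mathcal M_{T^f}^\dagger(X^f)$ the entropy $h^{T^f}$ vanishes because such measures are ``supported on the fixed point,'' and hence that $g=h^{T^f}$ on all of $\mathcal M_{T^f}(X^f)$. This is not true: $\mathcal M_{T^f}^\dagger(X^f)$ consists of all $\nu$ with $\nu(*^f)>0$, e.g.\ $\nu=\tfrac12\delta_{*^f}+\tfrac12\xi$ with $\xi$ an ergodic measure of positive entropy, for which $h^{T^f}(\nu)=\tfrac12 h^{T^f}(\xi)>0$ while $g(\nu)=0$ by the definition in Proposition \ref{first}. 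So $g\neq h^{T^f}$ as functions. The conclusion you need, $g^{har}=h^{T^f}$, nevertheless survives, but for a different reason: the harmonic extension depends only on the restriction of $g$ to ergodic measures, and the unique ergodic measure in $\mathcal M_{T^f}^\dagger(X^f)$ is $\delta_{*^f}$ itself (an ergodic $\nu$ has $\nu(*^f)\in\{0,1\}$ since $\{*^f\}$ is invariant), where $g(\delta_{*^f})=0=h^{T^f}(\delta_{*^f})$; on $\mathcal M^e_{T^f}(X^f)\setminus\{\delta_{*^f}\}$ Abramov's formula gives $g=h^{T^f}$. Hence $g^{har}=(h^{T^f})^{har}=h^{T^f}$ by harmonicity of the entropy function. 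Replace your pointwise-equality claim by this agreement-on-ergodic-measures argument and the proof is complete.
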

%\begin{prop}Assume $\overline{\mathcal M^{erg}_{T^f}(X^f)}\cap \mathcal{M}^{**}_{T^f}(X^f)=\{\delta_{*^f}\}$ and $(X,T)$ has a symbolic extension $\pi$ with $L:=\lim_{ \mu\rightarrow \delta_*} \frac{h^\pi (\mu)}{\int f\, d\mu}<+\infty$.Then the suspension flow  admits a  symbolic extension $\pi'$ with $h^{\pi'}\leq g^{har}$ where $g^{har}$ denotes the harmonic extension of the function  $g$ defined by $g( \nu)= \frac{h^\pi(\mu)}{\int f \, d\mu}$ for $ \nu =\Theta(\mu)\in \mathcal{M}_{T^f}^*(X^f)$ and $g(\delta_{*^f})=L$.\end{prop}\begin{proof}We first show $g^{har}$ is upper semi-continuous. It is enough to consider the upper semicontinuity at $\nu\in \mathcal M^{**}_{T^f}(X^f)$.When $\xi$ belongs to $\mathcal M^{**}_{T^f}(X^f)$ we let $\xi^*=\xi-\xi(*^f)\delta_{*^f}\in \mathcal M^{*}_{T^f}(X^f) $. By hypothesis we have   $\overline{\mathcal M^{erg}_{T^f}(X^f)}\cap \mathcal{M}^{**}_{T^f}(X^f)=\{\delta_{*^f}\}$, thus $\xi_n^*\xrightarrow{n}\nu^*$ for  any sequence $(\xi_n)_n$ going to $\nu$.Therefore, with $\xi_n^*=(1-\xi_n(*^f))\Theta(\mu_n) $ and $\nu^*=(1-\nu(*^f))\theta(\mu)$ we get \begin{align*}\limsup_ng^{har}(\xi_n)&=\limsup_n (1-\xi_n(*^f)) \frac{h^\pi(\mu_n)}{\int f \, d\mu_n}+L\xi_n(*^f),\\& \leq (1-\nu(*^f))\frac{h^\pi(\mu)}{\int f \, d\mu}+L\nu(*^f)=g^{har}(\nu).\end{align*} Then we may apply  Lemma 8.2.14 in \cite{downarowicz2011entropy} as in the proof of Proposition \ref{first} to conclude  the proof similarly, because any converging sequence of ergodic measures is either going to $\delta_{*^f}$ or to a measure in $\mathcal M^*_{T^f}(X^f)$. \end{proof}

\subsection{Noninvariance under orbit equivalence}
Contrarily to regular flows, two  singular flows with finite topological entropy  may be orbit equivalent but one admitting a symbolic extension and not the other. 

\begin{prop}\label{equicc}
There are two orbit equivalent (singular) flows $(X,\Phi)$ and $(X,\Psi)$ with $h_{top}(\Phi)=h_{top}(\Psi)<+\infty$ such that $(X,\Phi)$ admits  a symbolic extension but not $(X,\Psi)$.
\end{prop}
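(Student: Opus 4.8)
The plan is to realize both flows as singular suspension flows over the full two-shift with roof functions vanishing at $0^\infty$, chosen so that one roof is ``smooth enough'' at the singularity for Corollary \ref{singprin} to apply, while the other roof produces an entropy blow-up at $\delta_*$ that obstructs any symbolic extension; then exhibit an explicit orbit equivalence between them.

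First I would fix the base system $(X,T)=(\{0,1\}^{\mathbb Z},\sigma)$, which has finite topological entropy $\log 2$ and is its own principal symbolic extension. I would pick a continuous $f:X\to\mathbb R_{\ge0}$ vanishing only at $*=0^\infty$, satisfying the summability hypothesis \eqref{hypothesis}, and with $f$ ``regular'' at $*$ in the sense that $\lim_{\mu\to\delta_*}\frac{h(\mu)}{\int f\,d\mu}=0$ (for instance $f$ comparable near $*$ to a function of the number of leading/trailing zeros that decays slowly enough); by Corollary \ref{singprin} the singular suspension flow $(X^f,T^f)$ then admits a principal symbolic extension, in particular a symbolic extension. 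Set $\Phi=T^f$. Next I would choose a second roof $g:X\to\mathbb R_{\ge0}$, again vanishing only at $*$ and satisfying \eqref{hypothesis}, but decaying \emph{very fast} at $*$ — fast enough that along a suitable sequence $\mu_n\to\delta_*$ of Bernoulli-type measures one has $\frac{h(\mu_n)}{\int g\,d\mu_n}\to+\infty$ while $h_{top}(T^g)=\sup_{\mu\in\MM^*_T(X)}\frac{h(\mu)}{\int g\,d\mu}$ stays finite (this is the delicate balancing act: the supremum must remain finite even though the ratio is unbounded on a sequence converging to the singularity). Set $\Psi=T^g$. Since $T^f$ and $T^g$ are both suspensions over the same invertible base with positive roof off $*$, and $*^f=*^g$, the identity on $X\setminus\{*\}$ (suitably reparametrized along orbits, exactly as in the $\theta$-construction recalled after orbit equivalence is defined) and fixing the singularity provides a homeomorphism of $X^f$ onto $X^g$ carrying oriented $\Phi$-orbits to oriented $\Psi$-orbits; hence $\Phi$ and $\Psi$ are orbit equivalent, and $h_{top}(\Phi),h_{top}(\Psi)$ are both finite by the Abramov/variational formula.

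It remains to show $\Psi=T^g$ admits \emph{no} symbolic extension. For this I would use the characterization of Theorem \ref{main thm1 symbolic extension}: a symbolic extension exists iff there is a finite affine superenvelope of an entropy structure. By Lemma \ref{partial}, the restriction to $\MM^*_{T^g}(X^g)$ of any entropy structure of $(X^g,T^g)$ yields the sequence $g_k=h_k\circ\Theta^{-1}$ where $(h_k)_k$ is an entropy structure of $(X,T)$; an affine superenvelope $E$ of the flow must therefore dominate, near $\delta_{*^g}$, the quantities $\frac{h(\mu_n)}{\int g\,d\mu_n}$ along $\nu_n=\Theta(\mu_n)\to\delta_{*^g}$. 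Affineness of $E$ together with $E(\delta_{*^g})<\infty$ forces $E$ to be bounded on a neighborhood of $\delta_{*^g}$ in the (compact, hence finite-diameter) simplex — more precisely, writing any $\nu$ near $\delta_{*^g}$ as a convex combination of $\delta_{*^g}$ and something, the affine bound caps $E(\nu_n)$ by a constant — contradicting $E(\nu_n)\ge\frac{h(\mu_n)}{\int g\,d\mu_n}\to\infty$ once one checks (via the ``$\tilde{}$'' envelope in the superenvelope condition and upper semicontinuity of $h_k^{T^g}$) that indeed $E\ge\lim_k (E-h_k^{T^g})^{\tilde{}}+h^{T^g}$ forces $E\gtrsim g_k$ pointwise on $\MM^*$.

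\textbf{Main obstacle.} The crux is the construction of $g$: one must engineer a continuous roof, vanishing only at $0^\infty$ and satisfying \eqref{hypothesis}, for which $\sup_{\mu\in\MM^*_T(X)}\frac{h(\mu)}{\int g\,d\mu}<\infty$ yet $\limsup_{\mu\to\delta_*}\frac{h(\mu)}{\int g\,d\mu}=+\infty$; the finiteness of the supremum is a genuine constraint coupling the decay rate of $g$ near $*$ to the entropy of measures concentrated near $*$, and getting the two requirements simultaneously (plus verifying that this blow-up really defeats \emph{every} affine superenvelope, not merely the harmonic one) is where the real work lies. A convenient device is to take $g$ constant $=1$ outside a small cylinder around $*$ and let $g$ decay on nested cylinders $[0^n]$ like $1/a_n$ with $a_n\to\infty$ subexponentially slowly, so that $\int g\,d\mu_n$ for the $(1-1/n,1/n)$-Bernoulli measure behaves like $\sum 2^{-n}$-type tails while $h(\mu_n)\to0$ but $h(\mu_n)/\int g\,d\mu_n\to\infty$; one then checks finiteness of the global sup by an Abramov-type estimate bounding $\int g\,d\mu$ from below in terms of how much mass $\mu$ places near $*$ versus its entropy.
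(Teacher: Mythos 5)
There is a genuine gap, and it sits exactly where you flagged the ``delicate balancing act'': the roof $g$ you need cannot exist. By the Abramov/variational formula recalled in the paper, $h_{top}(T^g)=\sup_{\mu\in\MM^*_T(X)}\frac{h(\mu)}{\int g\,d\mu}$, and any sequence $\mu_n\to\delta_*$ with $\mu_n\in\MM^*_T(X)$ contributes to that supremum; hence $\limsup_{\mu\to\delta_*}\frac{h(\mu)}{\int g\,d\mu}\le h_{top}(T^g)$, and you cannot have the ratio blow up along $\mu_n\to\delta_*$ while the flow keeps finite topological entropy. More structurally, the full two-shift is expansive, so $h_{sex}=h$ on $\mathcal M_\sigma(X)$ and there is no gap between entropy and symbolic extension entropy in the base to exploit; the only conceivable obstruction for a suspension over it is entropy accumulating at the singularity, which by the above forces $h_{top}=+\infty$. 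Indeed the paper's Theorem \ref{singular} shows that in the finite-entropy regimes over the full shift the flow \emph{does} admit a symbolic extension, and the authors explicitly leave open the question of whether every singular symbolic suspension flow of finite topological entropy admits one — so your choice of base is the wrong starting point. (Your later claim that affineness of $E$ plus $E(\delta_{*^g})<\infty$ bounds $E$ near $\delta_{*^g}$ is also unjustified on an infinite-dimensional simplex, but the construction fails before that step.)

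The paper's proof uses a base where $h_{sex}$ and $h_{top}$ genuinely disagree: systems $(X_n,T_n)$ with $h_{top}(T_n)=4^{-n}$ but $h_{sex}(T_n)=3^{-n}$ (Boyle--Downarowicz--Fiebig type examples), glued into a one-point compactification $X=\coprod_n X_n\cup\{*\}$ with $T*=*$. Taking $f'|_{X_n}=4^{-n}$ gives $h_{top}(T^{f'})=\sup_n 4^{-n}/4^{-n}=1<\infty$ while $h_{sex}(T^{f'})\ge h_{sex}(T_n)/f'|_{X_n}=(4/3)^n\to\infty$, so no symbolic extension exists; taking $f|_{X_n}=2^{-n}$ keeps $h_{sex}(T_n)/f|_{X_n}=(3/2)^{-n}\cdot\ldots$ summable enough that an explicit affine superenvelope $E=\sum_n E_n$ with $E_n\le(3/4)^n$ can be assembled, yielding a symbolic extension of $T^f$ with entropy $1$. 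If you want to salvage your outline, you must replace the full shift by such a base with a nontrivial entropy-structure defect; the rest of your scheme (orbit equivalence via reparametrization, finiteness of both entropies) then goes through.
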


\begin{proof}
For any  $n\in \mathbb N$ there exists a topological system $(X_n,T_n)$ with $h_{top}(T_n)=4^{-n}$ and $h_{sex}(T_n)=3^{-n}$ (See Theorem D.1 in \cite{BFF02}).  We may let $(X_0,T_0)$ be a subshift. 
Let $X=\coprod_{n\in\mathbb N}X_n\cup\{*\}$ be the one point compactification of the $X_n$'s. We let $T:X \circlearrowleft$ defined by $T|_{X_n}=T_n$ and $T*=*$. The symbolic extension entropy $h_{sex}(T)$ of $(X,T)$ satisfies $h_{sex}(T)=\sup_nh_{sex}(T_n)=1$.  Then we consider the roof functions $f$ and $f'$ with $f(*)=f'(*)=*$  and $f|_{X_n}=\frac{1}{2^n}$, $f'|_{X_n}=\frac{1}{4^n}$ for all integers $n$. The  hypothesis (\ref{hypothesis}) is easily checked in this case. The topological entropies of the associated singular suspended flow $\Phi_f$ and $\Phi_{f'}$ satisfy $h_{top}(T^f)=h_{top}(\Phi_{f'})=h_{top}(T_0)=1$. Moreover we have  $$h_{sex}(T^{f'})\geq \frac{h_{sex}(T_n)}{f'|_{X_n}}=(4/3)^n$$ for all $n\in \N$ so that $\Phi_{f'}$ does not admit any symbolic extension. We check now $h_{sex}(T^{f})=1$.

For any $n$ there is a symbolic extension of the time-$1$ map of $(T^f)|_{X_n^f}$ with topological entropy less than $(3/4)^n$. We let $
\underline{E_n}:\mathcal M(X_n^f,(T^f)_1 )\rightarrow \mathbb R$ be the associated superenvelope given by the entropy function of this 
extension. Each $\underline{E_n}$ may be extended to an affine upper semi-continuous function $E_n$ on $\mathcal M(X^f,(T^f)_1)$ with 
$E_n(\mu)=0$ for $\mu(X_n^f)=0$. As $\underline{E_n}$ and thus $E_n$ is bounded from above by $(3/4)^n$, the function $E:=\sum_n 
E_n$ defines an affine upper  semi-continuous function. Let $\mathcal H =(h_k)$ be an entropy strucutre of $\left(X^f, (T^f)_1\right)$. By 
using again  Lemma 8.2.14 in \cite{downarowicz2011entropy}, to show $E$ is superenvelope it is enough to check for all $\mu$ in the 
closure of ergodic measures and  for all $\epsilon>0$, there exists $k$ such that 
$$\limsup_{\nu\rightarrow \mu, \ \nu \text{ ergodic}}(E-h_k)(\nu) \leq (E-h)(\mu)+\epsilon.$$ 
Either $\mu$ is supported on some $X_n$, then so does any ergodic measure $\nu$ close enough to $\mu$  and we may conclude in this case since $E(\mu)=E_n(\mu)$, $E(\nu)=E_n(\nu)$ and $E_n$ is a superenvelope of the time-$1$ map of the flow on $(X_n)_f$. Or $\mu$ is the Dirac mass at $*$ and $\limsup_{\nu\rightarrow \mu, \ \nu \text{ ergodic}}(E-h_k)(\nu)\leq \limsup_n\sup_{\xi}E_n(\xi)=0=E(\delta_*)$. Therefore $E$ is an affine superenvelope of the time-$1$ map of $T^f$, so that by Theorem \ref{main thm} the suspension flow $T^f$ admits a symbolic extension with topological entropy equal to $\sup_\mu E(\mu)=1$. 
\end{proof}

\begin{rem}In the above example, the discrete system $(X,T)$  admits a symbolic extension, however the singular suspension flow $(X^{f'}, T^{f'})$ has finite topological entropy, but  no  symbolic extensions.
\end{rem}

\subsection{Universal symbolic suspension flow}

In the following we consider  the two full shift with the singularity given by the infinite zero sequence, i.e. $(X,T)=\left(\{0,1\}^{\mathbb Z}, \sigma\right)$ and $*=0^\infty$. We let $d$ be the usual distance on $X$ given by
$$d((u_n)_n, (v_n)_n)=2^{-\min\{|n|, \ u_n\neq v_n \}}.$$
In particular we have $d((u_n)_n, *)=2^{-k_u}$ with $k_u=\min\{|n|, \ u_n=1 \}$.

We investigate the entropy and symbolic extension of a  singular symbolic suspension flow over $(X,T)$ associated to a roof function of the form $f(x)=R(d(x,*))$ for some continuous function $R:[0,1]\rightarrow \mathbb R_{\ge 0}$ with $R(*)=0$ and $R(x)>0$ for $x\neq *$. We may also write $f$ as $f(u)=g(k_u)$ for $u\not=0$ and for a function  $g:\mathbb N\rightarrow \mathbb R_{\ge 0}$ with $\lim_{k\rightarrow +\infty} g(k)=0$ by letting $g=R(2^{-\cdot})$.
With these notations the singular symbolic flow $X^f$ is well defined (i.e. (\ref{hypothesis}) holds true)  if and only if $\sum_{k\in \mathbb N}g(k)=+\infty$.

\begin{thm}\label{singular}
Assume the sequence $(kg(k))_{k\in \mathbb N}$ is converging to $l\in \mathbb R^+\cup\{+\infty\}$ when $k$ goes to infinity. Then 
\begin{enumerate}
\item if $l=0$, the topological entropy of $(X^f,T^f)$ is infinite, 
\item if $0<l<+\infty $,  then the symbolic extension entropy of $(X^f, T^f)$  satisfies 
$$\forall \nu\in \mathcal M_{T^f}(X^f), \ h_{sex}(\nu)=h^{\tilde{}}(\nu)=h(\nu)+\frac{1}{2l}\nu(*^f),$$
\item if $l=+\infty$  the flow $(X^f,T^f)$ admits a principal symbolic extension. 
\end{enumerate}
\end{thm}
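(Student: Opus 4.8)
The plan is to treat the three cases separately, with the main work being a careful analysis of the roof function $f(u)=g(k_u)$ where $g(k)\sim l/k$. Throughout, the essential quantitative object is the behaviour, near the singularity $*^f$, of the ratio $h^\pi(\mu)/\int f\,d\mu$ for symbolic extensions $\pi$ of the base $(X,T)=(\{0,1\}^{\mathbb Z},\sigma)$; since the full shift is expansive it is its own principal symbolic extension, so for the base we may take $\pi=\mathrm{Id}$ and $h^\pi=h$. The key estimate I would establish first is a two-sided control of $\int f\,d\mu$ for $\mu\in\mathcal M^*_T(X)$ close to $\delta_*$ in terms of the ``frequency of $1$'s'' seen under $\mu$: if $\mu$ is ergodic and not $\delta_*$, then by Kac's lemma $\int f\,d\mu=\mu(X^*)\int f\,d\mu_*$, while $\int f\,d\mu_*=\sum_{j\ge 1}g(j)\,\mu_*(\{$first return time to $X^*$ equals, roughly, the position of the nearest $1$ being $j\})$; using $g(j)\sim l/j$ and comparing with the entropy of the induced system one gets, heuristically, $\int f\,d\mu\gtrsim \tfrac{1}{2l}$-times something like $\mu(X^*)\cdot(\text{mean gap})\cdot(\text{something})$ — more precisely the relevant comparison is between $h(\mu)$ (which for measures near $\delta_*$ is carried by the rare $1$'s) and $\mu(X^*)$, since each excursion away from $*$ of the suspension contributes boundedly to entropy but length $\approx\sum g(j)\approx l\log(\text{gap})$.

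**For case (1), $l=0$:** here $g(k)k\to 0$, so $g$ decays faster than $1/k$; I would exhibit, for each $N$, an ergodic measure $\mu_N$ on the full shift supported on sequences whose $1$'s occur with controlled large gaps $\approx b_N$, chosen so that $h(\mu_N)$ stays bounded below (of order $(\log 2)/b_N$ per symbol times $b_N$, i.e.\ bounded below by a positive constant using a sub-shift of sequences with exactly one prescribed pattern per block of length $b_N$) while $\int f\,d\mu_N=\mu_N(X^*)\sum_{j}g(j)(\cdots)\to 0$ because $\sum_{j\le b_N} g(j)/b_N\to 0$ when $jg(j)\to 0$ (Cesàro). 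Then $h(\nu_{\mu_N})=h(\mu_N)/\int f\,d\mu_N\to\infty$, and by the Abramov--variational formula $h_{top}(T^f)=\sup h(\nu_\mu)=+\infty$. The main obstacle here is choosing $\mu_N$ cleanly so that both the lower bound on entropy and the upper bound on $\int f$ are transparent; a concrete choice is the image under a coding of a full shift on blocks of length $b_N$ containing a single $1$ in a prescribed coordinate, giving $h(\mu_N)=0$ — no good — so instead one uses blocks of length $b_N$ with a single $1$ whose position is free, giving per-symbol entropy $\tfrac{\log b_N}{b_N}$ and $\int f\,d\mu_N\approx \tfrac1{b_N}\sum_{j\le b_N}g(j)$, whence the ratio $\approx \log b_N/\sum_{j\le b_N}g(j)\to\infty$ since $\sum_{j\le b_N}g(j)=o(\log b_N)$ when $jg(j)\to 0$ — wait, one must check $\sum_{j\le b}g(j)=o(\log b)$, which does follow from $jg(j)\to 0$. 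This is the case I expect to be the most delicate to write, because it requires the Cesàro-type summation estimate and a genuine construction.

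**For case (3), $l=+\infty$:** here $g(k)k\to\infty$, so $\int f\,d\mu$ blows up faster than $h(\mu)$ stays bounded, and I expect $\lim_{\mu\to\delta_*} h(\mu)/\int f\,d\mu=0$. This is exactly the hypothesis of Corollary~\ref{singprin} applied to the base full shift, which has a principal symbolic extension (itself); hence $(X^f,T^f)$ admits a principal symbolic extension. So the only real work in case (3) is verifying $\lim_{\mu\to\delta_*} h(\mu)/\int f\,d\mu=0$: decomposing an ergodic $\mu$ near $\delta_*$ via its induced system on $X^*$, one has $h(\mu)=\mu(X^*)h(\mu_*,T_*)$ and $\int f\,d\mu=\mu(X^*)\int f\,d\mu_*$, so the ratio equals $h(\mu_*,T_*)/\int f\,d\mu_*$; since the return time $r$ to $X^*$ satisfies $f\gtrsim g(r)\gtrsim (l'/r)\cdot r = l'\to\infty$-ish, more carefully $\int f\,d\mu_* \ge \sum_j g(j)\mathbb P(r\approx j)$ and by Jensen/concavity this is bounded below by a quantity tending to infinity with $\int r\,d\mu_*$, while $h(\mu_*,T_*)\le \mathbb E[r]\cdot h_{top}(T)\cdot\log 2$-type bounds (induced-system entropy grows at most linearly in mean return time) — so the ratio tends to $0$. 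For the general measure (not ergodic) one passes to the ergodic decomposition and uses affinity/harmonicity of the relevant functions as in the proof of Proposition~\ref{first}.

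**For case (2), $0<l<\infty$, which is the heart of the theorem:** I would first compute $h^{\tilde{}}$, the u.s.c.\ envelope of the entropy function of the flow. Since the full shift is expansive and entropy is u.s.c.\ on it, the only defect of upper semicontinuity of $h$ on $\mathcal M_{T^f}(X^f)$ comes from measures near $*^f$; I would show $h^{\tilde{}}(\nu)=h(\nu)+\tfrac{1}{2l}\nu(*^f)$ by proving the two inequalities. For ``$\le$'': any $\xi=\Theta(\mu_\xi)\to\nu$ ergodic has $h(\xi)=h(\mu_\xi)/\int f\,d\mu_\xi$, and writing $\nu=(1-\nu(*^f))\nu'+\nu(*^f)\delta_{*^f}$ I must bound $\limsup h(\xi)$; the mass of $\xi$ escaping to $*^f$ forces $\mu_\xi$ to concentrate on sequences with long runs of $0$'s, and over such a run of length $r$ the entropy contribution is $O(1)$ (a single ``where does the $1$ sit'' choice contributes $\log$, but per unit flow-time $\int f\approx \sum_{j\le r}g(j)\approx l\log r$, giving contribution $\approx \tfrac{\log r}{l\log r}=\tfrac1l$ — and the factor $\tfrac12$ comes because in the bi-infinite full shift the nearest $1$ can be on either side, i.e.\ $k_u=\min$ over both directions, effectively halving the gap, so one gets $\tfrac1{2l}$). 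For ``$\ge$'' I would construct explicit ergodic measures: for the portion of mass one wants at $*^f$, use measures supported on sequences with exactly one $1$ per block of length $b$, coded into the suspension; as $b\to\infty$ such a measure has flow-entropy $\to\tfrac{\log b}{\int f}\to\tfrac{\log b}{l\log b}=\tfrac1l$ — hmm, this gives $\tfrac1l$ not $\tfrac1{2l}$, so one must be careful: the correct construction uses the symmetry $k_u=\min(|n|:u_n=1)$ which for a measure putting a single $1$ per block sees effective gap $b$ but distance $2^{-k_u}$ with $k_u$ uniform-ish on $[0,b/2]$, so $\int f\approx \tfrac2b\sum_{j\le b/2}g(j)\approx \tfrac2b\cdot l\log(b/2)\approx \tfrac{2l\log b}{b}$ while per-symbol entropy is $\tfrac{\log b}{b}$, ratio $\to\tfrac1{2l}$. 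Good. Then $h_{sex}$: the upper bound $h_{sex}(\nu)\le h^{\tilde{}}(\nu)$ would follow from Proposition~\ref{first} once I check its hypothesis fails to give something smaller — actually here $\lim_{\mu\to\delta_*}h(\mu)/\int f\,d\mu=\tfrac1{2l}>0$, so Prop.~\ref{first} does not directly apply with the clean hypothesis; instead I would apply the general machinery (Theorem~\ref{thm:dicrete superenvelop} / Lemma~8.2.14 of \cite{downarowicz2011entropy}) to check directly that $\nu\mapsto h(\nu)+\tfrac1{2l}\nu(*^f)$ is an affine superenvelope of the flow's entropy structure: affinity is clear, u.s.c.\ I would get from the computation of $h^{\tilde{}}$, and the superenvelope condition $\lim_k(E-h_k)^{\tilde{}e}=E-h$ reduces, via Lemma~\ref{partial} (the restriction of any entropy structure to $\mathcal M^*_{T^f}(X^f)$ yields $h_k\circ\Theta^{-1}$), to the behaviour at $*^f$, which is handled by the same escape-of-mass estimate. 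For the matching lower bound $h_{sex}(\nu)\ge h^{\tilde{}}(\nu)$, note every superenvelope dominates $h^{\tilde{}}$ when $h^{\tilde{}}$ is itself a superenvelope, so equality holds. **The main obstacle** I anticipate is getting the constant $\tfrac1{2l}$ exactly right in both the upper bound (escape of mass controlled by $\sum_{j\le r}g(j)\sim l\log r$, with the factor $2$ from bi-infiniteness) and the lower bound (the explicit construction), and in verifying the superenvelope condition uniformly near $*^f$ rather than just pointwise — this is where I would spend the most care, following closely the structure of the proof of Proposition~\ref{first} and of Lemmas~\ref{ab}--\ref{partial}.
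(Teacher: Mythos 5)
Your overall architecture (explicit measures escaping to $\delta_{*^f}$ for the lower bounds, Corollary~\ref{singprin} for case (3), a superenvelope argument for the upper bound in case (2)) identifies the right constants, and your case (1) construction (one free $1$ per block of length $b$, entropy $\tfrac{\log b}{b}$ against $\int f\,d\mu=o(\tfrac{\log b}{b})$ via the Ces\`aro estimate) is a valid alternative to the paper's computation with Bernoulli measures $\mu_\lambda$. But there is a genuine gap at the heart of case (2): the upper bound $h_{sex}(\nu)\le h(\nu)+\tfrac{1}{2l}\nu(*^f)$. You propose to verify directly that $E(\nu)=h(\nu)+\tfrac{1}{2l}\nu(*^f)$ is a superenvelope, saying the condition $\lim_k(E-h_k)^{\tilde{}e}=E-h$ ``reduces via Lemma~\ref{partial} to the behaviour at $*^f$.'' It does not: Lemma~\ref{partial} (and the estimate \eqref{yield} behind it) only controls the entropy structure on $\mathcal{M}^*_{T^f}(X^f)$, locally around measures giving zero mass to $*^f$, whereas the superenvelope condition must be checked at every $\nu$ with $0<\nu(*^f)<1$. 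An ergodic measure $\xi$ approximating such a $\nu$ cannot be split into a piece near $\delta_{*^f}$ and a piece where Lemma~\ref{partial} applies (the authors' own commented-out attempt at exactly this soft argument founders on the fact that $\xi_n-(1-\nu(*^f))\nu_*$ need not be a positive measure), and one must show that the entropy of $\xi$ \emph{not} captured by $h_k$ is at most $\tfrac{\nu(*^f)}{2l}+\epsilon_k$ \emph{uniformly} as $\xi\to\nu$. This is precisely the hard part, and the paper does not do it softly: it \emph{constructs} a symbolic extension $\Pi$ with $h^\Pi(\nu)=h(\nu)+\tfrac{1}{2l}\nu(*^f)$ and then invokes Theorem~\ref{thm:dicrete superenvelop}. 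That construction --- the accelerated shift $Sx=\sigma^{L(\mathbf k_x)}x$ with $L$ tuned region by region so that the new roof $f'(x)=\sum_{k<L(\mathbf k_x)}f(\sigma^kx)$ extends continuously with $f'(*)=l\log 2$ (turning the singular suspension into a \emph{regular} one), followed by an explicit block code whose fibre over $*$ has entropy exactly $\tfrac{\log 2}{2}$ --- is the real content of the proof, and your proposal has no substitute for it.

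Two smaller points. In case (3) your direct estimate is insufficient: the bound ``induced-system entropy grows at most linearly in the mean return time'' compares $h(\mu_*,T_*)\lesssim\mathbb{E}[r]$ with $\int S_rf\,d\mu_*\approx 2a\,\mathbb{E}[\log r]$, and that ratio does \emph{not} tend to $0$; what is needed is a bound of the form $h(\mu_*,T_*)\lesssim\mathbb{E}[\log r]$, which fails for general return-time distributions and is delicate even when true. The paper sidesteps this entirely by deducing (3) from (2): truncate the roof via $g_a(k)=\min(g(k),a/k)$, apply item (2) to $f_a$ to get $\limsup_{\mu\to\delta_*}h(\mu)/\int f\,d\mu\le h_{sex}(\delta_{*^{f_a}})=\tfrac{1}{2a}$ for every $a$, and then invoke Corollary~\ref{singprin}; so (3) is not independent of (2) and cannot be dispatched first. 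Finally, your identity $h^{\tilde{}}=h+\tfrac{1}{2l}\nu(*^f)$ should be obtained from the sandwich $h+\tfrac{1}{2l}\nu(*^f)\le h^{\tilde{}}\le h_{sex}\le h^{\Pi}$ rather than proved separately; as written, your direct argument for the inequality $h^{\tilde{}}\le h+\tfrac{1}{2l}\nu(*^f)$ is only a heuristic about ``runs of $0$'s'' and would face the same uniformity issues as the superenvelope verification.
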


\begin{ques}Does any singular symbolic suspension flow  with finite topological entropy  admit  a symbolic extension?
\end{ques}

 As a consequence of Theorem \ref{singular}, we present the following examples. 
Let $g(k)=1/k$, $g'(k)=1/\sqrt k$ and $g''(k)=1/k\log k$ for all $k$. The associated suspension flows $(X^f, T^f)$, $(X^{f'}, T^{f'})$ and $(X^{f''}, T^{f''})$ are orbit equivalent. Since $kg(k) \to 1$ and $kg'(k)\to +\infty$ as $k\to +\infty$, it follows from Theorem \ref{singular} that $(X^{f'}, T^{f'})$ admits a principal symbolic extension contrarily to $(X^f,T^f)$. Moreover they have both finite topological entropy, but $h_{top}(T^{f''})=+\infty$.  There exist also  orbit equivalent singular  topological flows $(X,\Phi)$ and $(X,\Psi)$ with $h_{top}(\Phi)=0$ and $h_{top}(\Psi)=+\infty$ \cite{sun1}.

\begin{cor}\label{equic}There are two orbit equivalent singular symbolic  flows $(X,\Phi)$ and $(X,\Psi)$ with $h_{top}(\Phi)=h_{top}(\Psi)<+\infty$ such that $(X,\Phi)$ admits  a principal  symbolic extension but not $(X,\Psi)$.
\end{cor}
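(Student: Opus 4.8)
The plan is to read off Corollary \ref{equic} as a direct specialization of the examples discussed immediately before it, combined with the trichotomy of Theorem \ref{singular}. Concretely, I would take $(X,\Phi)=(X^{f'},T^{f'})$ with $g'(k)=1/\sqrt{k}$ and $(X,\Psi)=(X^f,T^f)$ with $g(k)=1/k$, both over the two-full-shift $(\{0,1\}^{\mathbb Z},\sigma)$ with singularity $*=0^\infty$. The first point to nail down is that these two singular symbolic suspension flows are orbit equivalent. This should follow from the fact that any two singular suspension flows built over the same discrete system $(X,T)$ with roof functions vanishing at the same fixed point (and satisfying \eqref{hypothesis}) have the same orbits up to a time change: the orbit of $(x,t)$ in $X^{f'}$ and in $X^f$ traces out the same set of equivalence classes $\{(T^k x, s)\}$, just parametrized differently, and the identity map on equivalence classes gives the orbit equivalence $\Gamma$ preserving orientation. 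I would state this explicitly and observe that \eqref{hypothesis} holds for both $g$ and $g'$ since $\sum_k 1/k=\sum_k 1/\sqrt k=+\infty$.

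Next I would invoke Theorem \ref{singular} for each flow. For $g(k)=1/k$ we have $kg(k)=1\to l=1$, so we are in case (2): $h_{top}(T^f)=\sup_\mu E(\mu)$ is finite (indeed one reads $h_{sex}(\nu)=h(\nu)+\tfrac{1}{2}\nu(*^f)$, which is finite since $h$ is finite for the full shift), and more importantly $h_{sex}(\nu)>h(\nu)$ whenever $\nu(*^f)>0$, so $(X^f,T^f)$ admits \emph{no} principal symbolic extension. For $g'(k)=1/\sqrt k$ we have $kg'(k)=\sqrt k\to +\infty=l$, so we are in case (3): $(X^{f'},T^{f'})$ admits a principal symbolic extension. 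Finally, both flows have topological entropy equal to $h_{top}(T_0)$ where $T_0$ is the subshift factor — wait, here the base is the full shift itself, so $h_{top}(T^f)$ and $h_{top}(T^{f'})$ are the respective suprema $\sup_\mu h(\mu)/\int f\,d\mu$; I would check these are equal and finite. Actually the cleanest route is: in case (2), $h_{top}=\sup_\nu h_{sex}(\nu)=\log 2 + \tfrac{1}{2l}<\infty$, and in case (3) the principal symbolic extension forces $h_{top}(T^{f'})=h_{sex}(T^{f'})<\infty$; one then checks directly (via Abramov) that both equal the same value, or simply notes that orbit-equivalent flows with one having finite entropy — hmm, finiteness of entropy is \emph{not} an orbit equivalence invariant for singular flows (that is the point of $g''$), so I must compute both entropies directly and verify they coincide.

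The main obstacle, and the one step deserving real care, is establishing that $h_{top}(T^f)=h_{top}(T^{f'})$, since this equality is \emph{not} automatic from orbit equivalence in the singular setting. I would handle it by the Abramov formula: $h_{top}(T^h)=\sup_{\mu\in\MM^*_T(X)} h(\mu)/\int h\,d\mu$ for $h\in\{f,f'\}$. For the two full shift every $\mu\neq\delta_*$ with positive entropy is bounded below away from $*$ in the sense that $\int h\,d\mu>0$ is controlled, and the supremum is attained (or approached) by measures of full entropy $\log 2$ supported away from $*$, on which $\int f\,d\mu$ and $\int f'\,d\mu$ can be made arbitrarily small by spreading mass toward $*$ while keeping entropy near $\log 2$ — so both suprema are $+\infty$? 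No: that would contradict case (2). The resolution is that pushing mass toward $*$ forces $\int f\,d\mu\to 0$ but \emph{also} can only keep $h(\mu)$ bounded by $\log 2$, and the competition $h(\mu)/\int f\,d\mu$ is exactly what Theorem \ref{singular}(2) evaluates to the finite value $\log 2+\tfrac{1}{2l}$; so I would simply \emph{quote} the values furnished by Theorem \ref{singular}: case (2) gives $h_{top}(T^f)=\sup_\nu h_{sex}(\nu)$ finite, case (3) gives $h_{top}(T^{f'})$ finite, and then a short direct Abramov computation — or appeal to the fact proved en route to Theorem \ref{singular} — shows the two numerical values agree. Once all three facts (orbit equivalence; $(X^{f'},T^{f'})$ principal-symbolic-extendable; $(X^f,T^f)$ not; equal finite topological entropy) are in hand, the corollary is immediate, so I would keep the write-up to a short paragraph citing the examples preceding the statement and Theorem \ref{singular}.
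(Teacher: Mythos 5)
Your choice of examples and the use of Theorem \ref{singular} is exactly what the paper intends: the corollary is read off from the examples $g(k)=1/k$ (case $l=1$, so $h_{sex}(\delta_{*^f})=\tfrac12>0=h(\delta_{*^f})$ and no principal symbolic extension exists) and $g'(k)=1/\sqrt k$ (case $l=+\infty$, principal symbolic extension by item (3)), together with the observation that suspensions over the same base with roof functions vanishing only at the same fixed point are orbit equivalent. That part of your write-up is fine.

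The genuine gap is the equality $h_{top}(T^f)=h_{top}(T^{f'})$, which you correctly identify as the delicate point but then do not actually resolve: your proposed resolution (``a short direct Abramov computation shows the two numerical values agree'') is not true for these specific roof functions. Since $g(k)\le g'(k)$ for $k\ge 1$ one has $\int f\,d\mu\le\int f'\,d\mu$ for every $\mu$, hence $h_{top}(T^f)=\sup_\mu h(\mu)/\int f\,d\mu\ \ge\ h_{top}(T^{f'})$, and there is no reason for equality (testing the measure of maximal entropy already gives a strict inequality of the corresponding ratios). Your intermediate claim $h_{top}=\sup_\nu h_{sex}(\nu)=\log 2+\tfrac{1}{2l}$ is also unjustified: $h_{top}=\sup_\nu h(\nu)$, which is in general strictly smaller than $\sup_\nu h_{sex}(\nu)$, and neither equals $\log2+\tfrac1{2l}$. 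The missing (one-line) idea is to rescale: replace $f$ by $af$ for a constant $a>0$. This is again an orbit equivalence of singular symbolic flows, it multiplies the topological entropy by $1/a$ (Abramov), and it replaces $l=\lim_k kg(k)$ by $al$, so it does not change which case of Theorem \ref{singular} applies; since both $h_{top}(T^f)$ and $h_{top}(T^{f'})$ are finite and strictly positive (each dominates $h(\mu)/\int f\,d\mu$ for any fixed positive-entropy $\mu$), a suitable choice of $a$ makes the two topological entropies equal while keeping one flow with a principal symbolic extension and the other without. With that insertion your argument is complete and matches the paper's.
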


Corollary \ref{equic} and Proposition \ref{equicc} raise the following question :

\begin{ques}
 Do there exist orbit equivalent singular topological flows with equal topological entropy one without any symbolic extension and the other with a principal one?
\end{ques}

Now we present the proof of Theorem \ref{singular}.
\begin{proof}[Proof of Theorem \ref{singular}] (1) We first show that \begin{equation}\label{bern}\limsup_{\nu\rightarrow \delta_{*^f}}h(\nu)\geq \frac{1}{2l}.\end{equation}
This clearly implies 
the first item. Moreover the symbolic extension entropy function $h_{sex}$ is  upper semi-continuous and concave, therefore writing $\nu \in \mathcal M_{T^f}(X^f)$ as $\nu=\nu(*^f)\delta_{*^f}+(1-\nu(*^f))\xi$ with $\xi \in \mathcal M_{T^f}^*(X^f)$ we get a first inequality in the second item
\begin{align*}
h_{sex}(\nu)&\geq  (1-\nu(*^f))h_{sex}(\xi)+\nu(*^f)h_{sex}(\delta_{*^f}),\\
& \geq (1-\nu(*^f))h(\xi) +\nu(*^f)\limsup_{\eta\rightarrow \delta_{*^f}}h(\eta),\\
&\geq  h(\nu)+\frac{1}{2l}\nu(*^f).
\end{align*}

To prove (\ref{bern}) we  consider the Bernoulli measure $\mu_\lambda\in \mathcal M_\sigma(\{0,1\}^{\mathbb Z})$ with parameter $\lambda=\mu([1])$ where $[1]$ denotes the cylinder $[1]:=\{(u_n)_n\in \{0,1\}^\mathbb Z: \ u_0=1\}$. We also denote by $[0^{2k+1}]$ for $k\in \mathbb N$ the cylinder $[0^{2k+1}]:=\{(u_n)_n\in \{0,1\}^\mathbb Z: \ u_m=0 \text{ for }|m|\leq k \}$. We compute for $\lambda$ close to $0$ :
\begin{align*}
\int f\, d\mu_\lambda&=\mu_\lambda([1])+\sum_{k\in \mathbb N}g(k)\mu_\lambda \left( [0^{2k+1}]\setminus [0^{2(k+1)+1}]\right),\\
& = \lambda+l\sum_{k\in \mathbb N}\frac{\lambda(2-\lambda)(1-\lambda)^{2k+1}} {k}+O(\lambda),\\
& = \lambda- l\lambda(1-\lambda)(2-\lambda) \log(\lambda(2-\lambda))+O(\lambda),\\
& =-2l\lambda \log\lambda+O(\lambda),
\end{align*} 
where $\limsup\limits_{\lambda\to 0} \left|\frac{O(\lambda)}{\lambda}\right|<+\infty$.
As the entropy of $\mu_\lambda$ is equal to $-\lambda\log \lambda-(1-\lambda)\log (1-\lambda)=-\lambda\log \lambda+O(\lambda)$, we get 
\begin{align*}
h(\nu_{\mu_\lambda} )=\frac{h(\mu_\lambda)}{\int f\, d\mu_\lambda }&=\frac{-\lambda\log \lambda+O(\lambda)}{-2l\lambda\log \lambda+O(\lambda)}, \\
&\xrightarrow{\lambda\rightarrow 0}\frac{1}{2l}. 
\end{align*}
Moreover the $T^f$-invariant measure $\nu_{\mu_\lambda}$ is going to the Dirac measure  at the singularity when $\lambda$ goes to zero. Indeed for all fixed $k$, we have for some polynomial $P_k$ with $P_k(0)\neq 0$: 
\begin{align*}
\nu_{\mu_\lambda} ([0^{2k+1}]\times \mathbb R/\sim)&=1-\frac{\int_{X\setminus [0^{2k+1}]} f\,d\mu_\lambda}{\int_X f\, d\mu_\lambda},\\
&\geq 1-\frac{\lambda P_k(\lambda) +O(\lambda)}{\int_X f\, d\mu_\lambda},\\
&\xrightarrow{\lambda\rightarrow 0}1.
\end{align*}

\vspace{0,2cm}

(3) Let us show now the last item assuming the second one. Assume $kg(k)$ goes to infinity as $k$ tends to infinity. For $a>0$, let $g_a(k)=\min (g(k), \frac{a}{k})$ for all $k$. The associated roof function $f_a$ satisfies the hypothesis of the second item, that is, $\lim\limits_{k\to \infty}kg_a(k)=a$. The symbolic extension entropy function is upper semi-continuous, therefore $h^{\tilde{}}\leq h_{sex}$  and  we get for all $a>0$ :
\begin{align*}\limsup_{\mu\rightarrow \delta_*}\frac{h(\mu)}{\int f\, d\mu}& \leq \limsup_{\mu\rightarrow \delta_*}\frac{h(\mu)}{\int f_a\, d\mu},\\
&\leq h^{\tilde{}}(\delta_{*^{f_a}}),\\
& \leq h_{sex}(\delta_{*^{f_a}})= \frac{1}{2a}.
\end{align*}
By letting $a$ go to $+\infty$, we get $\limsup_{\mu\rightarrow \delta_*}\frac{h(\mu)}{\int f\, d\mu}=0$. By applying Corollary \ref{singprin}, the singular suspension flow $(X^{f},T^f)$ admits a principal symbolic extension. 

\vspace{0,2cm}

(2) We prove now the second item. For $g$ with $\lim_k kg(k)= l$ we build a symbolic extension of $(X^f, T^f)$ with 
$h^{\pi}(\nu)\leq h(\nu)+\frac{\nu(*^f)}{2l}$ for all $\nu\in \mathcal M(X^f, T^f)$. This will conclude the proof of Theorem  \ref{singular}.\\

\underline{{\it Step 1 :} Construction of a principal extension by a regular suspension flow $\pi:(Y^{f'},S^{f'})\rightarrow (X^f, T^f)$.}
We first build a principal extension of $(X^f, T^f)$ by a regular suspension flow $(Y^{f'},S^{f'})$. For $x\in \{0,1\}^{\mathbb Z}$ we let $k^+_x=\min\{n\in \mathbb N^*, \ x_n=1\}$ et $k_x^-=\min \{n\in \mathbb N, \ x_{-n}=1\}$ and we 
denote by $\mathbf k_x$ the pair $(k^-_x,k^+_x)$.  We consider the partition of  $\mathbb E:=
\left( \mathbb{N} \cup \{+\infty\}\right)\times \left( \mathbb{N}^{}\cup \{+\infty\} \right) \setminus 
\{(+\infty, +\infty)\} $ into the following subsets of points $\mathbf k=(k^-,k^+)\in  \mathbb E$:
\begin{itemize}
\item $\mathcal R_1:=\{   k^-=0\}$,
\item $\mathcal R_2:=\{ 0<k^-\leq \frac{1}{3}k^+\}$,
\item $\mathcal R_3:=\{ k^+>k^-> \frac{1}{3}k^+>0\}$,
\item  $\mathcal R_4:=\{  0<k^+\leq k^-\}$.
\end{itemize}
We let  $L: \mathbb E \rightarrow \mathbb N$ be  the function satisfying for all $\mathbf k=(k^-,k^+)\in \mathbb E$ :

\begin{enumerate}
\item $L(\mathbf k)=1$ for $\mathbf k\in \mathcal R_1$, 
\item $L(\mathbf k)=k^-$ for $\mathbf k\in \mathcal R_2$, 
\item $L(\mathbf k)=k^+-\lfloor \frac{(k^-+k^+)^2}{8k^-} \rfloor $ for $\mathbf k\in \mathcal R_3$,
\item $L(\mathbf k)=\lceil k^+/2\rceil$  for $\mathbf k\in \mathcal R_4$.
\end{enumerate}

\begin{lem}\label{injec}
 For all  $\mathbf k=(k^-,k^+)\in \mathcal R_3$, we have with $R(p,q)=\sum_{j=p}^q\frac{1}{j}$  for positive integers $q>p$:
\begin{enumerate}
\item $L(\mathbf k)> \frac{k^+-k^-}{2}\geq 0$,
\item $L(\mathbf k)\leq \lceil \frac{k^+}{2} \rceil$,
\item $\left| k^++k^--\lceil \sqrt{8k^-(k^+-L(\mathbf k))}\rceil \right|\leq 4$,
\item $R\left(k^-,\lfloor \frac{k^++k^-}{2}\rfloor\right)+R\left(k^+-L(\mathbf k),\lceil \frac{k^++k^-}{2} \rceil\right)\xrightarrow{k^-\rightarrow +\infty}\log 2$.
\end{enumerate}
\end{lem}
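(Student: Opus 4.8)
The plan is to verify the four claims of Lemma~\ref{injec} by direct estimation, treating them as elementary inequalities about the explicitly given function $L$ restricted to the region $\mathcal R_3 = \{k^+ > k^- > \tfrac13 k^+ > 0\}$. Throughout, write $L = L(\mathbf k) = k^+ - \lfloor \tfrac{(k^-+k^+)^2}{8k^-}\rfloor$, and set $s = k^- + k^+$ for brevity; note that on $\mathcal R_3$ we have $\tfrac43 k^- < k^+ + k^- < 2k^-$, equivalently $\tfrac43 k^- < s < 2k^-$, which is the arithmetic fact driving everything.

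For item (1), I would use $\lfloor \tfrac{s^2}{8k^-}\rfloor \le \tfrac{s^2}{8k^-} < \tfrac{(2k^-)^2}{8k^-} = \tfrac{k^-}{2}$, so $L > k^+ - \tfrac{k^-}{2} > \tfrac{k^+ - k^-}{2} \ge 0$ (the last inequality since $k^- > 0$; note $L \in \mathbb N$ so $L>$ a quantity $\ge 0$ gives $L \ge 1$ as well if needed). For item (2), I would use the lower bound $\lfloor \tfrac{s^2}{8k^-}\rfloor > \tfrac{s^2}{8k^-} - 1 > \tfrac{(4k^-/3)^2}{8k^-} - 1 = \tfrac{2k^-}{9} - 1$, hence $L < k^+ - \tfrac{2k^-}{9} + 1 \le k^+ - \tfrac{k^+}{2}\cdot\tfrac{2}{9}\cdot\ldots$; more carefully, since $k^- > \tfrac13 k^+$ one gets $L < k^+ - \tfrac{2}{27}k^+ + 1$, and then one checks this is $\le \lceil k^+/2\rceil$ for all $k^+$ in range — this requires a tiny bit of care for small $k^+$, which I would handle by noting $\mathcal R_3$ forces $k^+ \ge 4$ or so (since $k^+ > k^- > \tfrac13 k^+$ needs at least two distinct integers strictly between $\tfrac13 k^+$ and $k^+$), and then the linear bound suffices; the residual finitely many small cases can be inspected by hand.

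Item (3) is the reformulation that $L$ was designed to satisfy: inverting $L = k^+ - \lfloor \tfrac{s^2}{8k^-}\rfloor$ gives $\lfloor \tfrac{s^2}{8k^-}\rfloor = k^+ - L$, hence $8k^-(k^+-L) \le s^2 < 8k^-(k^+-L) + 8k^-$, so $\sqrt{8k^-(k^+-L)} \le s < \sqrt{8k^-(k^+-L) + 8k^-}$. Then $\lceil \sqrt{8k^-(k^+-L)}\rceil \ge \sqrt{8k^-(k^+-L)}$ and I would bound $s - \lceil\sqrt{8k^-(k^+-L)}\rceil$ above and below, using $\sqrt{a+8k^-} - \sqrt{a} = \tfrac{8k^-}{\sqrt{a+8k^-}+\sqrt a}$ together with $a = 8k^-(k^+-L) \ge 8k^- \cdot \tfrac{k^+-k^-}{2} \cdot(\text{something})$ — actually from item (1), $k^+ - L < \tfrac{k^-}{2}$ wait, rather $L > k^+ - \tfrac{k^-}2$ gives $k^+ - L < \tfrac{k^-}{2}$ so $a < 4(k^-)^2$, and also $k^+-L \ge 1$ so $a \ge 8k^-$; the gap estimate $\tfrac{8k^-}{\sqrt{a+8k^-}+\sqrt a}$ is then $O(\sqrt{k^-})$ in the wrong direction, so I must instead observe $a \approx s^2$, i.e.\ $\sqrt a$ is within $O(1)$ of $s$ directly: $\sqrt a \le s$ and $s < \sqrt{a + 8k^-} \le \sqrt a + \tfrac{8k^-}{2\sqrt a} \le \sqrt a + \tfrac{8k^-}{2\sqrt{8k^-}} \le \sqrt a + \sqrt{2k^-}$ — still too big. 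The clean route: $s^2 - a < 8k^- < 4s^2$ (since $s > \tfrac43 k^-$ gives $k^- < \tfrac34 s$, wait that's $6k^- < \tfrac{9}{2}\cdot\ldots$) — in fact $s > \tfrac43 k^-$ so $8k^- < 6s$, hence $s^2 - a < 6s$, so $a > s^2 - 6s = (s-3)^2 - 9$, giving $\sqrt a > s - 3 - \tfrac{9}{2(s-3)} \ge s - 4$ for $s$ not too small; combined with $\sqrt a \le s$ and the ceiling costing at most $1$, this yields $|s - \lceil\sqrt a\rceil| \le 4$. \textbf{This is the step I expect to be the main obstacle}, purely because tracking the floor/ceiling roundings and the small-$k^-$ boundary cases precisely enough to land inside the bound $4$ (rather than $5$ or $6$) is fiddly; I would likely need to either verify a handful of small cases numerically or sharpen the region constraints on $\mathcal R_3$.

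Finally, item (4) is an asymptotic statement, so here I can be generous with errors. Using $R(p,q) = \sum_{j=p}^q \tfrac1j = \log\tfrac{q}{p} + O(\tfrac1p)$ (by comparison of the harmonic sum with $\int$, or via $H_q - H_{p-1}$ and $H_n = \log n + \gamma + O(1/n)$), I would compute
\[
R\!\left(k^-, \lfloor \tfrac{s}{2}\rfloor\right) + R\!\left(k^+ - L, \lceil \tfrac{s}{2}\rceil\right) = \log\frac{s/2}{k^-} + \log\frac{s/2}{k^+-L} + o(1)
\]
as $k^- \to \infty$ (note $k^+ - L \to \infty$ too since by item (3) it is $\asymp s^2/k^- \asymp k^-$). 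By item (3), $k^+ - L = \tfrac{(s + O(1))^2}{8k^-} = \tfrac{s^2}{8k^-}(1 + o(1))$, so $\tfrac{s/2}{k^+ - L} = \tfrac{s/2}{s^2/(8k^-)}(1+o(1)) = \tfrac{4k^-}{s}(1+o(1))$, and therefore the sum of logs is $\log\tfrac{s}{2k^-} + \log\tfrac{4k^-}{s} + o(1) = \log 2 + o(1)$. Collecting the four parts completes the proof; I would present items (1), (2), (4) in full and allow myself to compress the case-checking in item (3).
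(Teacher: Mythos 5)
Your arguments for items (3) and (4) are essentially the paper's (the paper squeezes $\sqrt{(k^-+k^+)^2-8k^-}\le\sqrt{8k^-(k^+-L(\mathbf k))}\le k^-+k^+$ and uses $\sqrt{1-x}\ge 1-x$ together with $\tfrac{8k^-}{k^-+k^+}\le 4$; your route via $s^2-a<8k^-<6s$ is the same idea and lands inside the bound $4$ after the small-$s$ check you flag). But your treatment of items (1) and (2) rests on a false inequality. On $\mathcal R_3$ one has $k^+>k^->\tfrac13k^+$, hence $k^-<k^+<3k^-$ and therefore $2k^-<s=k^-+k^+<4k^-$ (equivalently $\tfrac43k^+<s<2k^+$). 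Your claimed range ``$\tfrac43 k^-<s<2k^-$'' has $k^-$ and $k^+$ interchanged, and its upper half is simply impossible since $k^+>k^-$ forces $s>2k^-$. Consequently your bound $\tfrac{s^2}{8k^-}<\tfrac{k^-}{2}$ in item (1) is false (the reverse inequality holds), and the intermediate conclusion $L>k^+-\tfrac{k^-}{2}$ fails concretely: for $(k^-,k^+)=(10,29)\in\mathcal R_3$ one has $\lfloor 39^2/80\rfloor=19$, so $L=10$, which is not $>24$. Item (2) is worse off: even granting your (true but far too weak) lower bound $s>\tfrac43k^-$, you arrive at $L<\tfrac{25}{27}k^++1$, and $\tfrac{25}{27}k^++1\le\lceil k^+/2\rceil$ is false for \emph{every} $k^+$, so ``the linear bound suffices'' cannot be repaired by checking finitely many small cases.

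Both items do hold, and the correct arguments are short. For (1), the claim $L(\mathbf k)>\tfrac{k^+-k^-}{2}$ is equivalent to $\lfloor\tfrac{s^2}{8k^-}\rfloor<\tfrac{s}{2}$, and $\tfrac{s^2}{8k^-}<\tfrac{s}{2}$ holds if and only if $s<4k^-$, which is exactly the defining condition $k^->\tfrac13k^+$ of $\mathcal R_3$ (and $\tfrac{k^+-k^-}{2}\ge 0$ since $k^+>k^-$). For (2), use AM--GM: $(k^-+k^+)^2\ge 4k^-k^+$ gives $\tfrac{s^2}{8k^-}\ge\tfrac{k^+}{2}$, hence $\lfloor\tfrac{s^2}{8k^-}\rfloor\ge\lfloor\tfrac{k^+}{2}\rfloor$ and $L(\mathbf k)\le k^+-\lfloor\tfrac{k^+}{2}\rfloor=\lceil\tfrac{k^+}{2}\rceil$. (The paper dismisses (1) and (2) as routine and only proves (3) and (4), so the burden of getting these two right falls entirely on your write-up.) With the corrected range $2k^-<s<4k^-$ your item (3) actually improves, since $8k^-<4s$, and your item (4) computation is fine as written because it only uses item (3) and the asymptotics $R(p,q)=\log\tfrac{q}{p}+O(\tfrac1p)$, exactly as in the paper.
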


\begin{proof}
One checks easily the two first items. Let us just show the two last ones. 
From the definition of $L$ on $\mathcal R_3$ we have  by using $\sqrt{1-x}\geq 1-x$ for all $x\in [0,1]$ :
\begin{align*}
\sqrt{8k^-\lfloor \frac{(k^-+k^+)^2}{8k^-} \rfloor }&=\sqrt{8k^-(k^+-L(\mathbf k))}\leq  k^++k^- ,\\
\sqrt{(k^-+k^+)^2-8k^-}&\leq \sqrt{8k^-(k^+-L(\mathbf k))}\leq  k^++k^- ,\\
 (k^++k^-)\left(1-\frac{8k^-}{(k^-+k^+)^2}\right)&\leq  \sqrt{8k^-(k^+-L(\mathbf k))}\leq k^++k^- ,\\
k^++k^- -4&\leq \sqrt{8k^-(k^+-L(\mathbf k))}\leq k^++k^-.
\end{align*}
The second line makes sense only if $8k^-\leq (k^++k^-)^2$. But $8k^-> (k^++k^-)^2$ implies $k^{-}\leq 1$ and  $k^+\leq 2$. In this remaining case we have therefore again $\left| k^++k^--\lceil \sqrt{8k^-(k^+-L(\mathbf k))}\rceil \right|\leq 4$.

Finally we prove item (4). Observe that $k^+$ and $k^-$ are going simultaneously to infinity for $\mathbf k\in \mathcal R_3$. Then as $R(p,q)= \log \frac{q}{p} +\frac{o(p)}{p}$ for large $p$ where $\lim\limits_{p\to \infty} \frac{o(p)}{p}=0$, we get 

\begin{align*}
R\left(k^-,\lfloor \frac{k^++k^-}{2}\rfloor\right)+R\left(k^+-L(\mathbf k),\lceil \frac{k^++k^-}{2} \rceil\right)&= \log \left(\frac{(k^++k^-)^2}{4k^-(k^+-L(\mathbf k))} \right)+\frac{o(k^-)}{k^-},\\
&\xrightarrow{k^-\rightarrow +\infty}\log 2.
\end{align*}

\end{proof}
Let $S:\{0,1\}^{\mathbb Z}\circlearrowleft$ be the topological system defined as  $Sx=\sigma^{L(\mathbf k_x)} x$ for $x\neq *$ and $S*=*$. 
The map $S$ is continuous at any $x\neq *$  because in this case $L(\mathbf k_y)$ is constant for $y$ in  some  neighborhood of $x$. Now if $x$ is close to the zero sequence then $k^+_x$ and $k^-_x$ are  large. Moreover we have always $0\leq  L(\mathbf k_x) \leq \lceil \frac{k^+_x}{2} \rceil$ and therefore $y=\sigma^{l(\mathbf k_x)}x$, which satisfies $y_n=0$ for  $n=-k^{-}_x, \cdots, \lfloor \frac{k^+_x}{2} \rfloor$, is also close to the zero sequence. For $x\in X$ with $x_0=1$, we let $p_x=\min \{n\in \mathbb N^*,   \ (S^n x)_0=1 \}$.

%Let $Y=\bigcap_{n\in \mathbb N} T^n X$.

\begin{lem}\label{fr}
For all $x\in X$ with $x_0=1$ and 
$3\leq k_x^+<+\infty$  there exists a unique $0<r<p_x$ such that 
\begin{itemize}
 \item $\mathbf k_{S^qx}\in \mathcal R_2$ for $1\leq q<r$, 
 \item  $\mathbf k_{S^rx}\in \mathcal R_3$, 
 \item $\mathbf k_{S^qx}\in \mathcal R_4$ for $r< q\leq p_x-1$.
 \end{itemize}
 
Moreover 

\begin{enumerate}
\item  for $1\leq q\leq r$ we have $$k^-_{S^qx}=2^{q-1},$$
\item there exists a sequence $\epsilon_{r+1},\cdots, \epsilon_{p_x-2}\in \{0,1\}$ depending 
only on $k^+_x$ such that  for $ r<q<p_x$ we have $$ k^+_{S^{q}x}=2^{p_x-1-q}+
\sum_{0\leq i< p_x-q-1} \epsilon_{q+i} 2^{i}.$$
\item $p_x-2-r\leq \lceil \frac{p_x}{2}\rceil -1$.
\end{enumerate}

\end{lem}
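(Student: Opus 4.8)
\textbf{Proof proposal for Lemma \ref{fr}.}

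The plan is to track how the pair $\mathbf k_{S^qx}=(k^-_{S^qx},k^+_{S^qx})$ evolves as $q$ increases from $0$ to $p_x$, using the fact that applying $S$ shifts by $L(\mathbf k)$ and that, until we hit the next $1$ at coordinate $0$, the position of the nearest $1$ to the right simply decreases while the position of the nearest $1$ to the left accumulates. First I would set up the basic recursion: if $(S^qx)_0=0$ (so $1\le q\le p_x-1$), then $k^-_{S^{q+1}x}=k^-_{S^qx}+L(\mathbf k_{S^qx})$ and $k^+_{S^{q+1}x}=k^+_{S^qx}-L(\mathbf k_{S^qx})$, because shifting left by $L$ moves the right-hand $1$ closer by $L$ and pushes all the intervening zeros (of which there are $\ge L$, by Lemma \ref{injec}(2) since $L(\mathbf k)\le\lceil k^+/2\rceil\le k^+$) onto the left side. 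So $k^-+k^+$ is invariant along the orbit segment $q=1,\dots,p_x$; call this constant $N=k^-_{Sx}+k^+_{Sx}$.

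Next I would establish the three-phase structure. Starting from $x$ with $x_0=1$, $3\le k_x^+<\infty$: the first application gives $k^-_{Sx}=1$ (the new left-$1$ is at coordinate $-1$, i.e. the old $x_0$), while $k^+_{Sx}=k^+_x-1\ge 2$ and one checks $\mathbf k_{Sx}$ is in $\mathcal R_2$ unless $k_x^+$ is already small; in the regime $1\le q<r$ one is in $\mathcal R_2$, where $L(\mathbf k)=k^-$, so $k^-$ exactly doubles: $k^-_{S^{q+1}x}=2k^-_{S^qx}$, giving item (1), $k^-_{S^qx}=2^{q-1}$. Phase $\mathcal R_2$ persists exactly as long as $k^-\le\frac13 k^+$, equivalently $k^-\le\frac14 N$; the first index $r$ where this fails (so $k^-_{S^rx}>\frac14 N$, i.e. $k^+_{S^rx}<\frac34 N$, but also still $k^+>k^-$ since $k^-_{S^{r-1}x}\le\frac14 N$ doubles to $\le\frac12 N<k^+$... one must check $k^-_{S^rx}<k^+_{S^rx}$ holds, i.e. $k^-_{S^rx}<\frac12 N$; since $k^-_{S^rx}=2k^-_{S^{r-1}x}\le 2\cdot\frac14 N$ only gives $\le\frac12 N$, so a small case-check at the boundary is needed) lands in $\mathcal R_3$. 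Then from $\mathcal R_3$, $L(\mathbf k)=k^+-\lfloor\frac{N^2}{8k^-}\rfloor$, so $k^+_{S^{r+1}x}=\lfloor\frac{N^2}{8k^-_{S^rx}}\rfloor$, and one checks via Lemma \ref{injec}(1),(2) that this new pair has $0<k^+\le k^-$, i.e. lies in $\mathcal R_4$; there $L(\mathbf k)=\lceil k^+/2\rceil$, so $k^+$ roughly halves each step, $k^+_{S^{q+1}x}=k^+_{S^qx}-\lceil k^+_{S^qx}/2\rceil=\lfloor k^+_{S^qx}/2\rfloor$, giving the binary-expansion formula of item (2) (the $\epsilon_i\in\{0,1\}$ record the bits lost to the ceilings, and depend only on the starting value $k^+_{S^{r+1}x}=\lfloor\frac{N^2}{8k^-_{S^rx}}\rfloor$, hence only on $k_x^+$). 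Phase $\mathcal R_4$ runs until $k^+$ reaches $1$, at which point $(S^{p_x}x)_0=1$; counting the halvings gives $p_x-1-r\le\lceil\log_2 k^+_{S^{r+1}x}\rceil+O(1)$.

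For item (3), I would combine the two growth rates: $k^-$ doubles $r-1$ times to reach $k^-_{S^rx}=2^{r-1}$, which is $\Theta(N)$ at the $\mathcal R_2/\mathcal R_3$ boundary, so $2^{r-1}\asymp N$; meanwhile $k^+_{S^{r+1}x}=\lfloor\frac{N^2}{8\cdot 2^{r-1}}\rfloor\asymp N$ as well, so the $\mathcal R_4$ phase takes $\asymp\log_2 N$ steps, i.e. $p_x-1-r\asymp\log_2 N\asymp r$. Making this precise: from $k^-_{S^rx}=2^{r-1}$ and $k^-_{S^{r-1}x}=2^{r-2}\le\frac14 N$ one gets $2^{r-1}\le\frac12 N$, while $2^{r-1}>\frac14 N$ (failure of $\mathcal R_2$), so $\frac14 N<2^{r-1}\le\frac12 N$; then $k^+_{S^{r+1}x}=\lfloor\frac{N^2}{8\cdot 2^{r-1}}\rfloor$ lies between $\lfloor\frac14 N\rfloor$ and $\lfloor\frac12 N\rfloor$, and the number of halvings from there down to $1$, which is $p_x-1-r$, is at most $\lceil\log_2(N/2)\rceil$ and at least $\lceil\log_2(N/4)\rceil-1$; since $p_x=N$ we have $p_x-2-r=N-r-2$, and one checks $N-r-2\le\lceil N/2\rceil-1$ reduces to $r\ge N/2-1$, hmm — this direction needs care. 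The cleaner route is: since $p_x = k^+_{Sx}+k^-_{Sx}+$ (nothing extra, as $x_0=1$ means the orbit returns after exactly... actually $p_x$ is the return time of the coordinate-$0$ symbol under $S$, and summing $L$ over one period equals $N$, so $p_x\le N$, with each $L\ge 1$), one bounds $r$ from below by noting the $\mathcal R_2$ phase alone advances the shift by $\sum_{q=1}^{r-1}2^{q-1}=2^{r-1}-1\ge\frac14 N-1$, so $r-1$ steps cover $\ge\frac14 N$ of the shift, forcing... The honest summary: \textbf{item (3) is the main obstacle}, as it requires converting the multiplicative doubling in $\mathcal R_2$ and halving in $\mathcal R_4$ into a matching additive count of steps, and the floors/ceilings plus the $\mathcal R_3$ transition contribute $O(1)$ slack that must be absorbed carefully into the ceiling in $\lceil p_x/2\rceil-1$. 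I expect items (1) and (2) to follow routinely from the recursions above, while (3) needs the quantitative interplay $2^{r-1}\asymp N\asymp 2^{p_x-r}$ made exact. The proof of existence and uniqueness of $r$ is immediate once the monotonicity of the phase index along the orbit (never returning from $\mathcal R_j$ to $\mathcal R_i$ with $i<j$) is verified, which is exactly what the recursions for $k^\pm$ give.
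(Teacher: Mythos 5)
Your setup is the right one and, for the parts you complete, coincides with the paper's: the recursion $k^-_{S^{q+1}x}=k^-_{S^qx}+L(\mathbf k_{S^qx})$, $k^+_{S^{q+1}x}=k^+_{S^qx}-L(\mathbf k_{S^qx})$, the doubling of $k^-$ in phase $\mathcal R_2$ (item (1)), and the halving recursion $k^+_{S^{q+1}x}=\lfloor k^+_{S^qx}/2\rfloor$ in phase $\mathcal R_4$ together with $k^+_{S^{p_x-1}x}=1$, which by backward induction gives the binary expansion of item (2). The paper proves (1), (2) and the three-phase structure in exactly this way, only more tersely.

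The genuine gap is item (3), which you yourself flag as incomplete. Your route goes through the conserved quantity $N=k^-+k^+$ and at one point invokes $p_x=N$; this is false in general, since the total shift over one return equals $N=k_x^+$ while each step shifts by $L\ge 1$, so one only gets $p_x\le N$ (strictly less as soon as some $L>1$, which is the generic case). The paper's argument for (3) is short and avoids $N$ entirely: since $\mathbf k_{S^rx}\in\mathcal R_3$ one has $k^+_{S^rx}>k^-_{S^rx}=2^{r-1}>\frac{1}{3}k^+_{S^rx}$, hence $k^+_{S^rx}\le 3\cdot 2^{r-1}$; on the other hand the expansion of item (2) evaluated at $q=r+1$ gives $k^+_{S^{r+1}x}\ge 2^{p_x-r-2}$ (its leading bit), and $k^+_{S^{r+1}x}\le k^+_{S^rx}$. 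Combining, $2^{p_x-r-2}\le 3\cdot 2^{r-1}<2^{r+1}$, so $p_x\le 2r+3$, i.e. $r\ge(p_x-3)/2$, whence $p_x-2-r\le\frac{p_x-1}{2}$, and since $p_x-2-r$ is an integer this is $\le\lceil p_x/2\rceil-1$ for either parity of $p_x$. The idea you were missing is that the lower bound on $k^+$ just after entering $\mathcal R_4$ comes for free from item (2), so none of the asymptotic bookkeeping of $O(1)$ slack that worried you is actually needed.
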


\begin{proof}

We only prove the two last items (2) and (3), as the other conclusion is easily checked. 
  For $p_x-1>q>r$ 
 we have $k^{+}_{S^{q+1}x}=k^{+}_{S^{q}x}-\lceil\frac{k^{+}_{S^{q}x}}{2} \rceil$, therefore $k^{+}_{T^{q}
 x}=2k^{+}_{S^{q+1}x}+\epsilon_q$ with $\epsilon_q\in \{0,1\}$ depending on the parity of $k^{+}_{S^{q}x}$. Also $k^{+}_{S^{p_x-1}x}=1$. Then we get by a direct induction the desired formula for  $k^+_{T^{q}x}$.
 
Concerning the last item, we have $\mathbf k_{S^rx}\in \mathcal R_3$, so that $k^+_{S^ry}>k^-_{S^ry}=2^{r-1}>\frac{k^+_{S^ry}}{3}$. Then we have $2^{p_x-r-2}\leq k^+_{S^{r+1}x}\leq  k^+_{S^ry}\leq 3\cdot 2^{r-1}$, thus $2^{p_x}\leq 2^{2r+3}$, i.e. $r\geq \frac{p_x-3}{2}$.  
  
\end{proof}

\begin{rem}
The integers $p_x$, $r$ and the sequence $\epsilon_{r+1},\cdots, \epsilon_{p_x-2}\in \{0,1\}$ only depend on $k_x^+$ for $x\in X$ satisfying  $3\leq k_x^+<+\infty$.
\end{rem}

Now we consider the roof function $f'$ on $X$ given by
$$\forall x\in X\setminus \{*\}, \ f'(x)=\sum_{k=0}^{ L(\mathbf k_x)-1 }f(\sigma^kx) $$
$$f'(*)=l\log 2.$$

\begin{lem}
$f'$ is continuous  on $X$.
\end{lem}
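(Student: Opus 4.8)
\emph{Plan.} The strategy is to treat continuity of $f'$ away from the singularity and at the singularity separately, the latter being the real content.

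\emph{Continuity away from $*$.} For $x\neq *$ I would first check that $\mathbf k_x=(k^-_x,k^+_x)$, and hence $L(\mathbf k_x)$, is constant on a neighbourhood of $x$: if $k^-_x$ and $k^+_x$ are both finite this is clear since $\mathbf k_x$ is read off finitely many coordinates of $x$; and if (say) $k^+_x=+\infty$, then $k^-_x<+\infty$ because $x\neq *$, so for $y$ near $x$ one has $k^-_y=k^-_x$ while $k^+_y$ is arbitrarily large, which puts $\mathbf k_y$ in $\mathcal R_1$ or $\mathcal R_2$ with the same value $L(\mathbf k_y)=L(\mathbf k_x)$. Consequently $f'(y)=\sum_{k=0}^{L(\mathbf k_x)-1}f(\sigma^k y)$ on this neighbourhood, a finite sum of continuous functions (each $f\circ\sigma^k$ is continuous because $f=R(d(\cdot,*))$ is continuous on all of $X$), so $f'$ is continuous at $x$.

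\emph{Continuity at $*$: set-up.} It remains to prove $f'(x)\to l\log 2=f'(*)$ as $x\to *$. Since $d(x,*)=2^{-\min(k^-_x,k^+_x)}$, $x\to *$ forces $k^-_x,k^+_x\to+\infty$, so eventually $k^-_x\ge 1$ and $\mathbf k_x\in\mathcal R_2\cup\mathcal R_3\cup\mathcal R_4$. In each of these regions $L(\mathbf k_x)<k^+_x$ (using Lemma~\ref{injec}(2) in $\mathcal R_3$), hence for $0\le k<L(\mathbf k_x)$ the point $\sigma^k x$ still lies inside the zero block of $x$, with nearest $1$ at distance $k^-_x+k$ on the left and $k^+_x-k$ on the right, so $k_{\sigma^k x}=\min(k^-_x+k,\ k^+_x-k)$ and $$f'(x)=\sum_{k=0}^{L(\mathbf k_x)-1}g\bigl(\min(k^-_x+k,\ k^+_x-k)\bigr).$$ The only analytic ingredient needed is that $jg(j)=l+o(1)$ implies $\sum_{j=a_N}^{b_N}g(j)=l\,R(a_N,b_N)+o(1)$ whenever $a_N\to\infty$ and $b_N/a_N$ stays bounded, together with the fact that $R(p,q)$ is unaffected, up to $o(1)$, by $O(1)$ shifts of $p$ and $q$ once $p\to\infty$.

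\emph{The three regions.} In $\mathcal R_2$ one has $L(\mathbf k_x)=k^-_x$ and $\tfrac12(k^+_x-k^-_x)\ge k^-_x$, so the minimum is always the left branch and $f'(x)=\sum_{j=k^-_x}^{2k^-_x-1}g(j)=l\,R(k^-_x,2k^-_x-1)+o(1)\to l\log 2$. In $\mathcal R_4$ one has $L(\mathbf k_x)=\lceil k^+_x/2\rceil$ and $k^+_x-k^-_x\le 0$, so the minimum is always the right branch and $f'(x)=\sum_{j=\lfloor k^+_x/2\rfloor+1}^{k^+_x}g(j)\to l\log 2$. The substantive case is $\mathcal R_3$: there the minimum switches from the left to the right branch exactly once, at the index $s=\lfloor\tfrac12(k^+_x-k^-_x)\rfloor$, and $s<L(\mathbf k_x)$ by Lemma~\ref{injec}(1); splitting the sum at $s$ produces a left part equal to $\sum_{j=k^-_x}^{\lfloor(k^+_x+k^-_x)/2\rfloor}g(j)=l\,R\bigl(k^-_x,\lfloor\tfrac12(k^+_x+k^-_x)\rfloor\bigr)+o(1)$ and a right part equal to $l\,R\bigl(k^+_x-L(\mathbf k_x),\lceil\tfrac12(k^+_x+k^-_x)\rceil\bigr)+o(1)$ (absorbing the $O(1)$ endpoint shifts via $k^+_x-L(\mathbf k_x)\ge\lfloor k^+_x/2\rfloor\to\infty$), so by Lemma~\ref{injec}(4) their sum tends to $l\log 2$. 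Thus $f'(x)\to l\log 2$ in all cases, which together with the first part proves that $f'$ is continuous.

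\emph{Main obstacle.} The delicate step is $\mathcal R_3$: one must confirm that within the range $0\le k<L(\mathbf k_x)$ the minimum switches branches exactly once, pin down the two summation windows, and reconcile their endpoints — after discarding the finitely many boundary terms near the switch, whose total contribution is $o(1)$ because $g\to 0$ — with the combination of $R$'s appearing in Lemma~\ref{injec}(4); the degenerate subcase $L(\mathbf k_x)=s+1$, in which the right window is empty, also has to be checked to remain consistent.
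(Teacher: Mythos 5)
Your proof is correct and follows the same route as the paper: reduce to continuity at $*$, write $f'(x)$ as a harmonic-type sum $l\,R(\cdot,\cdot)+o(1)$ separately on $\mathcal R_2$, $\mathcal R_3$, $\mathcal R_4$, and invoke Lemma~\ref{injec}(4) for the $\mathcal R_3$ case. The paper merely asserts the three asymptotic equivalences, whereas you derive them via the identity $k_{\sigma^k x}=\min(k^-_x+k,\,k^+_x-k)$ and the split of the sum at the switching index, which is exactly the computation the paper leaves implicit.
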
 
 
 \begin{proof}
Clearly it is enough to check the continuity at $*$, i.e. when $k_x^-$ and $k^+_x$ both go to infinity. 
We have 
\begin{itemize}
\item  $f'(x)\sim lR(k_x^-,2k_x^-)$ when $k_x$ lies in $\mathcal R_2$,
\item    $f'(x)\sim l R\left(k_x^-,\lfloor \frac{k_x^++k_x^-}{2}\rfloor\right)+l R\left(k_x^+-L(\mathbf k_x),\lceil \frac{k_x^++k_x^-}{2} \rceil\right)$, when  $k_x$ lies in $\mathcal R_3$,
\item $f'(x)\sim l R(\lfloor\frac{k^+_x}{2}\rfloor,k_x^+)$, when  $k_x$ lies in $\mathcal R_4$.
\end{itemize}
In all cases (see Lemma \ref{injec} (4) for the second case) we get $f'(x)\xrightarrow{x\rightarrow *}l\log 2$.

 \end{proof}

 Let $Y=\bigcap_{n\in \mathbb N}S^nX$. Any sequence $x=(x_n)_n\in X=\{0,1\}^{\mathbb Z}$ with $x_n=0$ for $n\leq  0$ or with $x_0=1$ belongs to $Y$. In particular for  any  $x\in X$ there is $k>0$ with $\sigma^{-k}x\in Y$. We let  $(Y^{f'}, S^{f'})$ be the suspension flow over $(Y, S|_Y)$ with roof function $f'$.
 
  We also denote by $Z$ the subset of $X$ given by sequences with infinitely many $1$'s in the future and in the past. The set of recurrent points of $(Y, S|_Y)$ is given by $(Y\cap Z) \cup\{*\}$. 
  
\begin{lem}
The map 
$
\pi:(Y^{f'}, S^{f'})\rightarrow  (X^f, T^f)$, $ 
 (x,t)\mapsto T^f_{t}(x,0)
$ is a principal topological extension.
\end{lem}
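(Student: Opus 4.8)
The plan is to verify that $\pi$ is well-defined, continuous, surjective, commutes with the flows, and then show it is principal via the Ledrappier--Walters formula by bounding the topological entropy of the fibers. First I would check that $\pi$ is well-defined: the map $(x,t)\mapsto T^f_t(x,0)$ on $\overline{Y^{f'}}$ must respect the identification $(x,f'(x))\sim (S_Yx,0)$. This reduces to the identity $f'(x)=\sum_{k=0}^{L(\mathbf k_x)-1}f(\sigma^k x)$, which says precisely that flowing $(x,0)$ for time $f'(x)$ in $(X^f,T^f)$ lands on $(\sigma^{L(\mathbf k_x)}x,0)=(Sx,0)$; at the singularity one uses $f'(*)=l\log 2>0$ and $T^f_t(*,0)=*^f$ for all $t$, so $(*, f'(*))\sim (*,0)$ is automatic. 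Continuity of $\pi$ on $Y^{f'}\setminus\{*^{f'}\}$ follows from continuity of $f$, $f'$, and the flow $T^f$ away from $*$; continuity at $*^{f'}$ uses the earlier lemma that the Bowen--Walters metric on $X^f$ is compatible with the quotient topology together with hypothesis \eqref{hypothesis}, exactly as in the proof that $T^f$ is a genuine flow: a point $(x,t)$ with $x$ close to $*$ and $0\le t<f'(x)$ maps to a point $T^f_t(x,0)$ which stays near $*^f$ because $t<f'(x)\to l\log 2$ is bounded and $\sum f(\sigma^k x)$ grows uniformly outside a neighborhood of $*$. Commutation with the flows is immediate from the definition $\pi(x,t)=T^f_t(x,0)$ and the semigroup law.

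Next I would establish surjectivity. Given any point $(y,s)\in X^f$ with $y\ne *$, I use the remark that for every $x\in X$ there is $k>0$ with $\sigma^{-k}x\in Y$: choosing $x=y$, set $w=\sigma^{-k}y\in Y$, and then $(y,s)$ lies on the $T^f$-orbit of $(w,0)$, i.e. equals $T^f_{t}(w,0)$ for the appropriate $t=s+\sum_{j=0}^{k-1}f(\sigma^j w)\ge 0$; since $t$ can be taken in $[0,f'(w)+\dots)$ and $\pi$ is flow-equivariant, unwinding modulo the suspension identification on $Y^{f'}$ shows $(y,s)\in\pi(Y^{f'})$. The singularity $*^f$ is $\pi(*^{f'})$. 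Since $\pi$ is continuous and $Y^{f'}$ compact, $\pi$ is a topological extension.

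The substantive step is principality, and here I would invoke the Ledrappier--Walters formula quoted in the excerpt: it suffices to show $h_{\mathrm{top}}\bigl((S^{f'})_1,\pi^{-1}(w)\bigr)=0$ for every $w\in X^f$. For $w=*^f$ the fiber is $\{*^{f'}\}$, a single point, so its entropy is zero. For $w\ne *^f$, write $w=(y,s)$; the fiber $\pi^{-1}(w)$ consists of points $(x,t)\in Y^{f'}$ lying on a single $S^{f'}$-orbit segment that maps onto the $T^f$-orbit through $w$, and the only freedom is which $Y$-preimage of $y$ under iterates of $S$ one picks together with the compensating time shift. Concretely, a point of $\pi^{-1}(w)$ has the form $(\sigma^{-m}y,\,t_m)$ where $\sigma^{-m}y\in Y$ and $t_m = s + \sum_{j=0}^{?}f(\cdots)$ is the backward flow time; because $S$ shifts $x$ forward by $L(\mathbf k_x)\ge 1$, the indices $m$ with $\sigma^{-m}y\in Y$ along the orbit are a prescribed subsequence, so the fiber is at most countable and each orbit segment meeting it has bounded length in the suspension direction. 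The key point is that $f'$ is bounded below away from $*$ (and $f'(*)=l\log 2>0$), so along a Bowen $(n,\varepsilon)$-orbit of $(S^{f'})_1$ the number of distinct fiber points that can be $(n,\varepsilon)$-separated grows at most polynomially in $n$; hence the exponential growth rate is zero. I would make this precise exactly as in the proof of the orbit-equivalence theorem above (the Claim~1/Claim~2 machinery with the Bowen--Walters metric), replacing the roof $g$ there by $f'$: cover the relevant time-interval by boundedly many pieces on which the Birkhoff sums of $f'$ vary by less than $\varepsilon$, and on each such piece at most one point of a separated set survives.

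The main obstacle I anticipate is the fiber-entropy estimate near the singularity $*^f$: although the fiber over $*^f$ itself is a single point, nearby fibers $\pi^{-1}(y,s)$ with $y$ close to $*$ involve orbit segments of the flow $S^{f'}$ whose length (in $S^{f'}$-time) is large, since $f'$ is bounded below but the number of $S$-iterates needed to traverse the long ``excursion'' near $0^\infty$ grows; one must check that the \emph{number} of such segments that can be $(n,\varepsilon)$-separated is still subexponential, which uses that $S$ is essentially deterministic on these excursions (Lemma~\ref{fr}: the excursion combinatorics $p_x,r,\epsilon_{r+1},\dots,\epsilon_{p_x-2}$ depend only on $k_x^+$, so there is no exponential branching). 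Handling this uniformly in $y$ — i.e. getting a single polynomial bound valid for all fibers — is where the work concentrates, and it is precisely what forces the detailed construction of $L$ and the structure lemmas preceding this statement.
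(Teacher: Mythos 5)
Your verification of well-definedness (via $f'(x)=\sum_{k=0}^{L(\mathbf k_x)-1}f(\sigma^kx)$) and your surjectivity argument match the paper, and Ledrappier--Walters is indeed the right tool. But there are two problems in the principality step. First, your claim that ``for $w=*^f$ the fiber is $\{*^{f'}\}$, a single point'' is false: since $T^f_t(*^f)=*^f$ for all $t$, the entire set $\{*\}\times[0,f'(*)]/\sim$ maps to $*^f$, so $\pi^{-1}(*^f)$ is a circle of circumference $f'(*)=l\log 2>0$ on which $S^{f'}$ acts as the translation flow. The conclusion $h_{\mathrm{top}}(\pi^{-1}(*^f))=0$ still holds, but because a circle rotation has zero entropy, not because the fiber is a point.

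Second, and more substantively, you set out to bound $h_{\mathrm{top}}(\pi^{-1}(w))$ for \emph{every} $w\in X^f$, and the part you identify as the ``main obstacle'' --- a uniform subexponential bound on separated sets in fibers over points $(y,s)$ with $y$ near $*$ --- is exactly the part you do not carry out; the appeal to the Claim~1/Claim~2 machinery from the orbit-equivalence proof is a gesture, not an argument, and it is not clear it closes. The paper avoids this entirely: in the Ledrappier--Walters integral $\int h_{\mathrm{top}}(\pi^{-1}w)\,d\nu(w)$ only $\nu$-generic, hence recurrent, points $w$ matter. The recurrent points of $(X^f,T^f)$ other than $*^f$ lie in $Z\times\mathbb R/\sim$ (sequences with infinitely many $1$'s in past and future), and for such $w=(x,t)$ the fiber is the \emph{singleton} $\{(\sigma^{-k}x,s)\}$ with $k$ the least nonnegative integer such that $\sigma^{-k}x\in Y$; together with the circle fiber over $*^f$ this gives $h_{\mathrm{top}}(\pi^{-1}(w))=0$ at every recurrent point with no counting at all. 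You should restrict to recurrent points and identify the fibers there explicitly rather than attempt a uniform entropy estimate over all fibers.
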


\begin{proof}
To prove the extension property, it is enough to see $\pi(x, f'(x))=\pi(Sx,0)$, i.e. $T^{f}_{f'(x)}(x,0)=(Sx, 0)$ for all $x\in Y$ which follows from 
\begin{align*}T^f_{f'(x)}(x,0)&=T^f_{\sum_{l=0}^{ L(\mathbf k_x)-1 }f(\sigma^kx)}(x,0),\\
&=T^f_{f(\sigma^{ L(\mathbf k_x)-1}x)}\circ \cdots \circ T_{f(x)}^f(x, 0),\\
&=(\underbrace{\sigma\circ \cdots \circ \sigma}_{L(\mathbf k_x) \text{ times}}(x), 0),\\
&=(Sx, 0).
\end{align*}

The factor map $\pi$ is surjective. Indeed for  any  $x\in \{0,1\}^{\mathbb Z}$ there is $k>0$ with $\sigma^{-k}x=y\in Y$. Therefore for  $(x,t)\in X^f$ there is $s\geq 0$ with  $\pi(y,s)=T^f_s(y,0)=(x,t)$.
The recurrent points different from $*^f$ in $X^f$ are contained in the subset 
$Z\times \mathbb{R} / \sim$ of $X^f$. For 
$x\in Z$ we let $y=\sigma^{-k}x\in  Y$ where  $k$ is the smallest nonnegative integer  with $\sigma^{-k}x\in Y$. Then   $\pi^{-1}(x,t)=\{(y,s)\}$ for some $0\leq s\leq f'(y)$. Moreover $\pi^{-1}*^f$ is contained  in the subset $\{0\}\times \mathbb{R} / \sim$ of 
$Y^{f'}$ and the suspension flow $T^{f'}$ restricted to this set is topologically conjugated to the translation flow on the circle. In any case we have $h_{top}(\pi^{-1}(x,t))=0$ for all recurrent points $(x,t)$ of $(X^f, T^f)$. By Ledrappier-Walters formula, $\pi$ is a principal extension.  
\end{proof}

\underline{{\it Step 2 :} A symbolic extension $\chi$ of $(Y,S)$.} 
To conclude the proof of Theorem \ref{singular} we investigate the symbolic extensions of $(Y,S)$. We will build a symbolic extension $\chi$ of $(Y,S)$ (with an embedding) with entropy function $h^\chi$ equal to $\mu\mapsto h(\mu)+\frac{\mu(*)\log 2}{2}$.

 Let us call a {\it block} any finite word  of the form $10^l:=1\underbrace{0 \cdots 0}_{l \text{ times}}$ with $l\in \mathbb N$. Any sequence in 
$Y\cap Z$ is an infinite concatenation of such blocks. A map $\psi:Y\cap Z\rightarrow \mathcal A^\mathbb{Z}$ for some alphabet $\mathcal A$ is said a {\it block code map }if for any $x\in Z\cap Y$ whose orbit is given by the concatenation of blocks $(B_n)_{n\in \mathbb Z}$ the orbit of $\psi(x)$ under the shift on $ \mathcal A^\mathbb{Z}$  is the concatenation of $(\Psi(B_n))_n$ for some map $\Psi$ from the set of all blocks to $\bigcup_{n\in \mathbb N}\mathcal A^n$. For $\Psi$ given, the map $\psi$  will be completely defined by letting the zero coordinate of $\psi(x)$ be equal to the zero-coordinate of $\Psi(B_0)$  for any $x\in Y\cap Z$ with $x_0=1$   and by ensuring $\psi\circ S=\sigma\circ \psi$ where $\sigma$ denotes here the shift on $\mathcal A^\mathbb{Z}$.

We define now the map $\Psi$. The alphabet $\mathcal A$ is given by $$\mathcal A =\{y^z, \ y\in \{1,2,3,4\} \text{ and }z\in \{1,2,3,4,\times\}\}.$$   For $y^z\in \mathcal A$ we will refer to $y$ as the $y$-coordinate of $y^z$.  For any $l\in \mathbb N$, we let   $\Psi(10^l)=y_1^{z_1}\cdots y_p^{z_p}$ be the word  of length $p=p_x$ for any $x\in Y$ with $x_0=x_{l+1}=1$ and $x_1=\cdots =x_l=0$ such that we have $y_q=i$ whenever $\mathbf k_{S^{q-1}x}\in \mathcal R_i$ for any $1\leq q\leq p$. Finally  we put  with the notations of Lemma \ref{fr}:
\begin{itemize}
\item $z_1=\left|k^++k^--\lceil \sqrt{8k^-(k^+-L(\mathbf k))}\rceil \right|$ with $(k^-,k^+)=\mathbf k_{T^rx}$, then $0\leq z_1\leq 4$ by Lemma \ref{injec}  (3), 
\item $z_{p-2i}=\epsilon_{p-2-i}$ for $0\leq i\leq p-3-r$ (note that $p-2( p-3-r)>1$ by Lemma \ref{fr} (3)).
\item $z_i=\times $ for others $i$. 
\end{itemize}
\begin{lem}
The block code map $\psi$ defines a uniform generator of $(Y,S)$, i.e. $\psi$ is a Borel embedding of $(Y\cap Z,S)$ to  $(\mathcal A^\mathbb{Z},\sigma)$ such that $\chi:=\psi^{-1}$ extends in a symbolic extension of $(Y,S)$ on $\overline{\psi(Y\cap Z)}\subset \mathcal A^\mathbb{Z}$.
\end{lem}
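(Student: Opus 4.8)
The plan is to verify three things about the block code map $\psi$: that $\Psi$ is injective on the set of all blocks (so that $\psi$ is injective on $Y\cap Z$, hence a Borel isomorphism onto its image), that $\psi$ conjugates $(Y\cap Z,S)$ to a subsystem of $(\mathcal A^{\mathbb Z},\sigma)$, and finally that the closure $\overline{\psi(Y\cap Z)}$ together with the inverse map extends to a genuine topological extension of $(Y,S)$ — i.e. that the complement $\overline{\psi(Y\cap Z)}\setminus \psi(Y\cap Z)$ maps onto the remaining orbits of $Y$, which by an earlier lemma are exactly the orbit closure of the fixed point $*$ (the orbit of $0^\infty$ under $S$ consisting of sequences with finitely many $1$'s on one side). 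First I would record what each letter $y^z$ encodes: the $y$-coordinate records which region $\mathcal R_i$ the corresponding iterate falls in, and by Lemma \ref{fr} a block $10^l$ with $k^+\geq 3$ produces a word of shape $1\, 2^{r-1}\, 3\, 4^{p-1-r}$ in $y$-coordinates, while the $z$-coordinates on the $4$-run record (via $z_1$ and the bits $\epsilon$) exactly enough information to reconstruct $k^+_x$, hence $l$, hence the block. For the finitely many small blocks ($k^+\leq 2$, equivalently $l\leq 1$) one checks injectivity by hand. This reconstruction is the substantive point; I would phrase it as: from $\Psi(10^l)$ one reads off $p$ (the length) and $r$ (position of the unique $3$); from the $\epsilon$-bits stored in the $z_{p-2i}$ and the relation $k^+_{S^{q}x}=2^{p-1-q}+\sum_{0\le i<p-q-1}\epsilon_{q+i}2^i$ of Lemma \ref{fr}(2) one recovers $k^+_x=k^+_{S^0x}$, and $k^+_x=l+1$ determines the block.

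Next I would check the dynamical-conjugacy part. By construction $\psi\circ S=\sigma\circ\psi$ on $Y\cap Z$, and $\psi$ is continuous and injective there; since $Y\cap Z$ is a Borel (in fact $G_\delta$) subset of the compact metric space $Y$ and $\psi$ is Borel, $\psi$ is a Borel isomorphism onto $\psi(Y\cap Z)$, which gives the uniform-generator (embedding) property once we know $\overline{\psi(Y\cap Z)}$ is a subshift — which it is, being a closed shift-invariant subset of $\mathcal A^{\mathbb Z}$. It then remains to see that $\chi:=\psi^{-1}$, a priori defined only on $\psi(Y\cap Z)$, extends continuously to all of $\overline{\psi(Y\cap Z)}$ with values in $Y$ and intertwines the shift with $S$. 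For this I would argue that if a sequence $\psi(x^{(n)})$ converges in $\mathcal A^{\mathbb Z}$ but $x^{(n)}$ does not converge in $Y\cap Z$, then — because the only way to escape $Y\cap Z$ is to have $k^-_{x^{(n)}}\to\infty$ or $k^+_{x^{(n)}}\to\infty$, i.e. $x^{(n)}\to$ a point on the orbit of $*$ — the limit word must be forced to be the image of such a boundary point; concretely, long runs of the letters with $y$-coordinate $2$ (coming from $\mathcal R_2$, which occurs precisely when $k^-$ is large) and the structure of $\Psi$ on long blocks force the preimage to be well-defined. The continuity of $\chi$ on the whole of $\overline{\psi(Y\cap Z)}$ follows from the standard fact that a continuous injection between compact metrizable spaces — here one checks $\psi$ extends to a continuous injection $Y\to\overline{\psi(Y\cap Z)}$ — is a homeomorphism; the only delicate point is injectivity on the boundary, which comes down to the fact that the orbit of $*$ in $Y$ is a single circle-like orbit and $\Psi$ sends it to a single orbit.

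The main obstacle I expect is precisely the boundary analysis: showing that the closure $\overline{\psi(Y\cap Z)}$ does not acquire extra points that fail to correspond to points of $Y$, equivalently that $\psi$ extends to a continuous (not merely Borel) injection on all of $Y$. This is where the careful design of $L$ on region $\mathcal R_3$ and of the $z$-coordinates pays off — the estimates in Lemma \ref{injec}, especially items (3) and (4), are there exactly to make the roof function $f'$ continuous and to make the coding near $*$ behave continuously. Once continuity at $*$ is established (using that a block $10^l$ with $l$ large produces a word beginning with a long $2$-run whose length is $\lfloor\log_2(l+1)\rfloor$-ish, matching the structure of $S$ near $0^\infty$), the extension $\chi$ is automatically a factor map, and the verification $h^\chi(\mu)=h(\mu)+\tfrac{\mu(*)\log 2}{2}$ — which is what ultimately feeds back into Theorem \ref{singular}(2) via Abramov's formula and the fact that $f'(*)=l\log 2$ — will be handled afterwards, not inside this lemma.
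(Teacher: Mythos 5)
Your plan for the injectivity half is essentially the paper's, but one step of your decoding is not right as stated. The formula $k^+_{S^{q}x}=2^{p_x-1-q}+\sum_{0\le i<p_x-q-1}\epsilon_{q+i}2^{i}$ of Lemma \ref{fr}(2) holds only for $r<q<p_x$, i.e.\ on the $\mathcal R_4$-part of the itinerary, so from the $\epsilon$-bits you recover $k^+_{S^{r+1}x}$, \emph{not} $k^+_{S^0x}=k^+_x$. To get back across the single $\mathcal R_3$-step one must invert $L$ on $\mathcal R_3$: knowing $k^-_{S^rx}=2^{r-1}$ and $k^+_{S^{r+1}x}=\lfloor (k^-+k^+)^2/8k^-\rfloor$ determines $k^-+k^+$ (hence $l$) only up to a bounded ambiguity, and this is exactly what the letter $z_1$ and Lemma \ref{injec}(3) are for: $z_1$ records the correction $\left|k^++k^--\lceil\sqrt{8k^-(k^+-L(\mathbf k))}\rceil\right|$. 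You mention $z_1$ in passing but your concrete recipe bypasses it; without it $\Psi$ is not injective. Likewise, injectivity of $\Psi$ on blocks does not by itself give injectivity of $\psi$: one must also recover the position of the origin inside its block (equivalently $k^-_x$ or $k^+_x$), which the paper does case by case according to the $y$-coordinate of $(\psi(x))_0$.

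The genuine gap is in your treatment of the closure. You propose to extend $\psi$ to a continuous \emph{injection} $Y\to\overline{\psi(Y\cap Z)}$ and invoke the fact that a continuous bijection of compact metric spaces is a homeomorphism, the ``only delicate point'' being injectivity over the boundary. This cannot work: by design $\chi^{-1}(*)$ is large --- it contains every $u\in\overline{\psi(Y\cap Z)}$ with no $y$-coordinate equal to $1$, in particular the two positive-entropy subshifts of finite type generated by $\{2^02^{\times},2^12^{\times}\}$ and $\{4^04^{\times},4^14^{\times}\}$ --- and this is precisely what produces $h^\chi(\delta_*)=\frac{\log 2}{2}$ and, ultimately, the extra entropy term at the singularity in Theorem \ref{singular}(2). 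So $\chi$ is very far from injective over the fixed point, no continuous injective extension of $\psi$ to all of $Y$ exists, and the claim that ``the orbit of $*$ in $Y$ is a single circle-like orbit'' is false at the discrete level ($*$ is a fixed point of $S$; the circle appears only after suspension). What must be proved instead is that $\chi$ extends to a continuous equivariant \emph{surjection}: write any $u$ in the closure as a concatenation of words $\Psi(B)$ and semi-infinite or bi-infinite words with no $y$-coordinate $1$; send the bi-infinite such words to $*$; pin down the position along semi-infinite runs of $0$'s by the same formulas used for injectivity; then check continuity of this explicitly defined map and deduce surjectivity onto $Y$ from the density of $Y\cap Z$ and compactness.
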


%\textcolor{red}{To show a symbolic extension with a embedding, it is sufficient to show it for the set of recurrent points? Yes, Remark 1.4 in \cite{bd2019}Beside, is $\chi$ surjective on $Y$? Yes its image contains $Y\cap Z$ which is dense in $Y$. }

\begin{proof}

We first  check $\Psi$ is injective on the set of blocks. Assume $\Psi(10^l)=y_1^{z_1}\cdots y_p^{z_p}$. Then we may recover $l$ from the $y_i$ and $z_i$ as follows. For $x\in Y$ with $x_0=x_{l+1}=1$ and $x_1=\cdots =x_l=0$ we have
\begin{itemize}
\item $r=\sharp \{i, \, y_i=2\}+1$,
 \item $z_{p-2i}=\epsilon_{p-2-i}$ for $0\leq i\leq p-3-r$.
\item $k^{-}_{S^rx}=2^{r-1}$,
\item $k^+_{S^{r+1}x}=2^{p-r-2}+
\sum_{0\leq i< p-r-2} \epsilon_{r+1+i}2^{i}$,
\item $l=k^{-}_{S^rx}+k^+_{S^{r}x}=z_1+\lceil \sqrt{8k^-_{S^{r}x}k^+_{S^{r+1}x}}\rceil$.
\end{itemize}
Then we may rebuild  the decomposition of $x\in Y\cap Z$ into blocks which are delimited by the $y$-coordinates equal to  $1$ in the sequence $\psi(x)$. Therefore, to prove the injectivity of $\psi$ on $Y\cap Z$, it only remains to show how to repair the position of $x$ inside the blocks, that is $k^{-}_x$ or $k^+_x$. 
With the above notations, assume $(\psi(x))_0=y_q^{z_q}$. We have :
\begin{itemize}
\item if $y_q=1$ then $k^{-}_x=0 $
\item if $y_q=2$ or $3$, we get $k^{-}_x=2^{q-2}$,
\item if $y_q =4$, then   $k^+_{x}=2^{p-1-l}+
\sum_{0\leq i< p-q-1} \epsilon_{q+i} 2^{i}$.
\end{itemize}

Let us check now that $\psi^{-1}$ extends to a symbolic extension of $(Y,S)$. 
 Any $u\in \overline{\psi (Y\cap Z)}\setminus \psi(Y\cap Z)$ may be written as the concatenation of words of the form $\Psi(B)$ for blocks $B$ and semi-infinite or bi-infinite words in the alphabet $\mathcal  A$ without any $y$-coordinate equal to  $1$.
 In the last case, when there is no $y$-coordinate equal to $1$ in $u$,  we just let $\chi(u)$ be the zero sequence, i.e. $\chi(u)=*$.  Moreover  a  semi-infinite block without $y$-coordinates equal to $1$ is sent to a semi-infinite sequence of $0$'s. Then the position of $x=\chi(u)$ inside such a semi-infinite sequence is  defined as below :
 \begin{itemize}
 \item if the semi-infinite sequence lies in the future then  $k^{-}_x=2^{l-2}$, 
 \item if it lies in the past $k^+_{x}=2^{p-1-l}+
\sum_{0\leq i< p-l-1} \epsilon_{l+i} 2^{i}$ where $l$ is the position of $u$ inside the semi-infinite block. 
\end{itemize}
Defined in this way, the extension $\chi$ of $\psi^{-1}$ on   $\overline{\psi (Y\cap Z)}$ is continuous. 
 Observe finally  that $\chi$ is surjective on $Y$ because  the image of $\chi$ contains $Y\cap Z$ which is dense in $Y$.

\end{proof}

\begin{rem}By letting $\psi(*)$ be the $1^\times$ sequence $(1^\times)^\infty$, we get a Borel embedding of the set of recurrent points of $(Y,S)$ to $\mathcal A^\mathbb Z$. By Remark 1.4 in \cite{bd2019} we may then extend $\psi$ to a Borel embedding of the whole system $(Y,S)$ satisfying $\chi\circ \psi=\Id_Y$.
\end{rem}

 We finish now  the proof of the second item of Theorem \ref{singular}. The only recurrent points in $\overline{\psi(Z\cap Y)}\setminus \psi(Z\cap Y)$ belong to the two subshifts of finite type generated \footnote{ In other terms any element of the subshift of finite type is a concatenation  of these two words.} respectively by the $2$-words $2^02^\times$, $2^12^\times$ and  $4^04^{\times}$, $4^14^{\times}$, which  are contained in $\chi^{-1}*$. Therefore $h^\chi(\mu)=h(\mu)$ for any $\mu\in \mathcal M_{S}(Y)$ with $\mu(Y\cap Z)=1$. 
 Moreover $h^\chi(\delta_*)=h_{top}(\chi^{-1}*)=\frac{\log 2}{2}$. The function $h^\chi$ being affine we get $h^\chi(\mu)=h(\mu)+\mu(*)\frac{\log 2}{2}$ for all $\mu \in \mathcal M_{S}(Y)$.\\
 
% \textcolor{red}{$\chi^{-1}*$ = the two subshifts of finite type generated respectively by the $2$-words $2^02^\times$, $2^12^\times$ and  $4^04^{\times}$, $4^14^{\times}$? Yes}
 
\underline{{\it Step 3 :} Conclusion.} 
  Finally the  symbolic extension  $\pi'$ induced by $\chi$  of $(Y^{f'}, S^{f'})$  by the suspension flow over 
 $(\overline{\psi(Y\cap Z)}, \sigma)$ under the roof function $f'\circ \chi$ satisfies  $h^{\pi'}(\nu)=h(\nu)$ for all $\nu\in \mathcal M_{S^{f'}}^*(Y^{f'})$ and $h^{\pi'}(\delta_{*^{f'}})=\frac{h^{\chi}(\delta_*)}{f'(*)}=\frac{1}{2l}$. As the extension $\pi:(Y^{f'}, S^{f'})\rightarrow (X^f, T^f)$ is  principal and satisfies $\pi^{-1}\mathcal M_{X^{f}}^*(X^{f})=\mathcal M_{S^{f'}}^*(Y^{f'})$, the extension $\Pi=\pi\circ \pi'$ also satisfies $h^{\Pi}(\nu)=h(\nu)$ for all $\nu\in \mathcal M_{T^{f}}^*(X^{f})$ and $h^{\Pi}(\delta_{*^{f}})=\frac{1}{2l}$. This concludes the proof of the second item of Theorem \ref{singular}, because the function $h^{\Pi}$ is affine.
\end{proof}

\bibliographystyle{alpha}
\bibliography{universal_bib}

\end{document}